\documentclass[a4paper,12pt]{article}



\usepackage{amsmath}
\usepackage{amsthm}
\usepackage{amssymb}

\usepackage[english]{babel}

\usepackage{tikz}

\usepackage{comment}
\usepackage{enumitem}


\newcommand{\mbb}{\mathbb}


\theoremstyle{plain}
\newtheorem{thm}{Theorem}[section]
\newtheorem{defi}[thm]{Definition}
\newtheorem{lem}[thm]{Lemma}
\newtheorem{cor}[thm]{Corollary}

\theoremstyle{remark}

\newtheorem*{rem*}{Remark}
\newtheorem*{conj*}{Conjecture}

\numberwithin{equation}{section}

\newcommand{\bsa}{{\boldsymbol{a}}}

\newcommand{\bsd}{{\boldsymbol{d}}}








\newcommand{\NN}{{\mathbb{N}}} 
\newcommand{\RR}{{\mathbb{R}}} 
\newcommand{\ZZ}{{\mathbb{Z}}} 



\DeclareSymbolFont{bbold}{U}{bbold}{m}{n}
\DeclareSymbolFontAlphabet{\mathbbold}{bbold}








\newcommand{\Addresses}{{
  \bigskip
  \footnotesize

	\noindent
  Sigrid Grepstad, \textsc{Department of Mathematical Sciences, Norwegian University of Science and Technology, 7491 Trondheim, Norway.} 
	\par\nopagebreak
	\noindent \textit{E-mail address}: \texttt{sigrid.grepstad@ntnu.no}

  \medskip
	\noindent
  Mario Neum\"uller, \textsc{Department of Financial Mathematics and applied 
	Number Theory, Johannes Kepler University Linz, Altenbergerstraße 69, 4040 
	Linz, Austria}
  \par\nopagebreak
  \noindent \textit{E-mail address}: \texttt{mario.neumueller@jku.at}
}}

\allowdisplaybreaks

\begin{document}

\title{Asymptotic behaviour of the Sudler product of sines for quadratic irrationals}
\author{Sigrid Grepstad and Mario Neum\"uller \thanks{The authors are supported by the
 Austrian Science Fund (FWF): Project F5505-N26, which is part of the Special Research 
Program ``Quasi-Monte Carlo Methods: Theory and Applications''. The work resulting in this paper was partially carried out at the Erwin Schr\"{o}dinger Institute in Vienna, Austria.}}

\date{\today}

\maketitle

\begin{abstract}
We study the asymptotic behaviour of the sequence of sine products $P_n(\alpha) = \prod_{r=1}^n |2\sin \pi r \alpha|$ for real quadratic irrationals $\alpha$. In particular, we study the subsequence $Q_n(\alpha)=\prod_{r=1}^{q_n} |2\sin \pi r \alpha|$, where $q_n$ is the $n$th best approximation denominator of $\alpha$, and show that this subsequence converges to a periodic sequence whose period equals that of the continued fraction expansion of $\alpha$. This verifies a conjecture recently posed by Mestel and Verschueren in \cite{MV15}.
\end{abstract}

\centerline{\begin{minipage}[hc]{130mm}{
{\em Keywords:} Trigonometric product, real quadratic irrational, continued fraction, Kronecker sequence, Lehmer sequence, $q$-series. \\
{\em MSC 2010:} 11J70, 41A60}
\end{minipage}} 

\section{Introduction}
In this paper, we study the sequence of sine products 
$$P_n(\alpha) = \prod_{r=1}^{n} |2 \sin \pi r \alpha|$$
for irrational $\alpha>0$. Early studies of this product were conducted by Erd\H{o}s and Szekeres \cite{erdos} and Sudler \cite{sudler} in the 1960s, and in following decades the sequence has proved important to both pure and applied mathematics (see e.g.\ \cite{bell,borgain,freiman}, or \cite{buslaev,driver,LUB99,petruska} for a connection to $q$-series). It appears that research has been carried out simultaneously, and partly independently, in different mathematical disciplines, resulting in a number of different terminologies and representations of $P_n(\alpha)$. For a brief summary of key results we recommend the introduction of \cite{MV15}.

The focus of this paper will be the subsequence
\begin{equation}
Q_n(\alpha) := \prod_{r=1}	^{q_n} | 2 \sin \pi r \alpha| ,
\label{eq:defqn}
\end{equation}  
where $(q_n)_{n\geq 0}$ are the best approximation denominators of $\alpha$. In a recent paper, Mestel and Verschueren study $Q_n(\alpha)$ in the special case where $\alpha= \omega := (\sqrt{5}-1)/2$ is the fractional part of the golden mean \cite{MV15}. For this case, it was suggested by Knill and Tangerman in \cite{KT11} that the limit value $\lim_{n\rightarrow \infty} Q_n(\omega)$ might exist, and this is confirmed by Mestel and Verschueren.
\begin{thm}[{\cite[Theorem 2.2]{MV15}}]
\label{thm:mvgolden}
If $\omega$ denotes the golden mean and $(F_n)_{n\geq 1}=(1,1,2,3,5,\ldots)$ the Fibonacci sequence, then there exists a constant $c>0$ such that
\begin{equation*}
\lim_{n\rightarrow \infty} Q_n(\omega) = \lim_{n\rightarrow \infty} \prod_{r=1}^{F_n} | 2 \sin \pi r \omega | = c .
\end{equation*}
\end{thm}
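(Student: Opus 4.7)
The plan is to exploit the Fibonacci recurrence $F_{n+1}=F_n+F_{n-1}$ together with the arithmetic identity $F_n\omega-F_{n-1}=(-1)^{n-1}\omega^n=:\varepsilon_n$ (which follows from the Binet formula since $\omega=-\psi$), noting that it yields the scaling relation $\varepsilon_{n+1}=-\omega\,\varepsilon_n$. Introducing the family of shifted products
\[
T_n(x)\;:=\;\prod_{r=1}^{F_n}\bigl|2\sin\pi(r\omega+x)\bigr|,\qquad T_n(0)=Q_n(\omega),
\]
splitting at $r=F_n$ and using $(F_n+r)\omega\equiv r\omega+\varepsilon_n\pmod 1$ immediately gives the functional recursion
\[
T_{n+1}(x)\;=\;T_n(x)\,T_{n-1}(x+\varepsilon_n),
\]
so in particular $Q_{n+1}(\omega)=Q_n(\omega)\cdot T_{n-1}(\varepsilon_n)$, and proving convergence of $Q_n(\omega)$ reduces to proving that $T_{n-1}(\varepsilon_n)\to 1$.

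My second step is to rescale to the natural scale of the shift by setting $\widetilde T_n(y):=T_n(\varepsilon_n y)$. Using $\varepsilon_{n+1}=-\omega\varepsilon_n$ and $\omega^2=1-\omega$, the recursion becomes an equation on a \emph{fixed} interval:
\[
\widetilde T_{n+1}(y)\;=\;\widetilde T_n(-\omega y)\,\widetilde T_{n-1}(\omega^2 y-\omega),
\]
and one checks that $[-1,1]$ is invariant under both substitutions $y\mapsto -\omega y$ and $y\mapsto\omega^2 y-\omega$ and contains the relevant points $\{0,-\omega\}$. The target is to prove that $\widetilde T_n$ converges uniformly on $[-1,1]$ to a continuous positive limit $G$. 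The limit will then automatically satisfy the fixed-point equation $G(y)=G(-\omega y)\,G(\omega^2 y-\omega)$, and the desired constant is $c:=G(0)=\lim_n Q_n(\omega)$.

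To prove uniform convergence I would pass to logarithms and show that the sequence $(\log\widetilde T_n)$ is uniformly Cauchy on $[-1,1]$. The natural decomposition of $\log\widetilde T_n(y)=\sum_{r=1}^{F_n}\log|2\sin\pi(r\omega+\varepsilon_n y)|$ is into a \emph{bulk} part, where $\|r\omega\|$ is bounded away from zero and a first-order Taylor expansion gives a difference of order $|\varepsilon_n|$ to the unperturbed sum, and a \emph{boundary layer} of indices $r$ for which $\|r\omega\|$ is exponentially small. By the theory of continued fractions for $\omega$, these boundary-layer indices sit at (or very near) the Fibonacci denominators $F_k$, for which $F_k\omega\equiv\varepsilon_k\pmod 1$ with the explicit scaling $\varepsilon_k/\varepsilon_n=(-\omega)^{k-n}$; this allows the dangerous factors to be written in closed form as $|2\sin\pi(\varepsilon_k+\varepsilon_n y)|$. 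The bulk contribution is handled by the equidistribution of $\{r\omega\}$ together with $\int_0^1\log|2\sin\pi x|\,dx=0$ and the sharp discrepancy bound for $\omega$ (whose partial quotients are identically $1$).

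The main obstacle is precisely the boundary-layer analysis. Each near-zero factor has size $\omega^k$, so its logarithm is linear in $k$ and cannot be made small individually; convergence hinges on an exact cancellation of these singular contributions across different Fibonacci scales, which is exactly what the renormalized recursion for $\widetilde T_n$ encodes. Extracting this cancellation rigorously requires propagating a quantitative regularity bound (for instance, a uniform Lipschitz estimate for $\log\widetilde T_n$ on $[-1,1]$ away from the two singularities at $y=0$ and $y=-\omega$) through an induction that is itself self-referential through the recursion, and closing this Cauchy-type estimate is the technical heart of the argument. Once uniform convergence is secured, positivity of $c=G(0)$ follows because each $T_n(0)=Q_n(\omega)$ is positive and bounded below by the explicit lower bounds on $\|r\omega\|$ afforded by the bounded partial quotients of $\omega$.
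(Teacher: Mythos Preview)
Your renormalization approach via the recursion $\widetilde T_{n+1}(y)=\widetilde T_n(-\omega y)\,\widetilde T_{n-1}(\omega^2 y-\omega)$ is a genuinely different route from the paper's. The paper (following Mestel--Verschueren) never iterates the Fibonacci splitting of $T_n(x)$; instead it substitutes the exact identity $\omega = p_n/q_n + (-1)^{n-1}\omega^n/q_n$ into $Q_n(\omega)$, recognises the result as a perturbation of the rational product $\prod_{r=1}^{q_n-1}|2\sin(\pi r p_n/q_n)|=q_n$, and decomposes $Q_n=A_nB_nC_n$ into three explicit factors whose limits are computed directly (Sections~4--6). Your self-similar recursion is closer in spirit to Knill--Tangerman's original observation; it is structurally elegant and, if it could be closed, would give a limiting functional equation for $G$, but it makes positivity and explicit constants harder to read off than the paper's direct decomposition does.

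That said, the proposal has genuine gaps. First, the sentence ``convergence of $Q_n(\omega)$ reduces to proving that $T_{n-1}(\varepsilon_n)\to 1$'' is only a necessary condition; you would need summability of $\log T_{n-1}(\varepsilon_n)$, not just its vanishing. You tacitly abandon this reduction by switching to uniform convergence of $\widetilde T_n$, which is fine, but then you explicitly acknowledge that ``closing this Cauchy-type estimate is the technical heart of the argument'' without actually closing it. Describing the boundary-layer structure is not the same as proving that the self-referential induction on the Lipschitz bound terminates; nothing in the proposal shows it does. Second, your positivity argument is circular: bounded partial quotients only give Lubinsky's polynomial bounds $F_n^{K_1}\le Q_n(\omega)\le F_n^{K_2}$ with $K_1$ possibly negative, which does \emph{not} bound $Q_n(\omega)$ away from zero. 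Strict positivity of the limit is exactly the content of the theorem and must emerge from the convergence analysis itself --- in the paper it is obtained separately for $B_m$ and $C_m$ via Lemma~5.2 and the estimate $\sum_t 1/u_t^2<1$. Finally, a minor slip: the zero of $\widetilde T_n$ on $[-1,1]$ sits at $y=-1$ (from the factor $r=F_n$, since $F_n\omega+\varepsilon_n y\equiv \varepsilon_n(1+y)\pmod 1$), not at $y=0$ or $y=-\omega$.
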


Mestel and Verschueren conjecture in \cite{MV15} that Theorem \ref{thm:mvgolden} can be extended to all quadratic irrationals. More precisely, they suggest that if the continued fraction expansion of $\alpha$ has period $\ell$, then the subsequence $Q_n(\alpha)$ will converge to a periodic sequence whose period length divides $\ell$. Our main goal is to verify this claim. 
\begin{thm}
\label{MainThm}
Suppose $\alpha$ has a purely periodic continued fraction expansion $\alpha=[0;\overline{a_1,\ldots,a_{\ell}}]$ with $a_1,\ldots,a_{\ell} \in \NN$ and period $\ell$. Let $(q_n)_{n\geq 1}$ be the sequence of best approximation denominators of $\alpha$. Then there exist positive constants $C_0$, $C_1, \ldots , C_{\ell-1}$ such that
\begin{equation*}
\lim_{m\rightarrow \infty} Q_{\ell m + k}(\alpha) = \lim_{m\rightarrow \infty} \prod_{r=1}^{q_{\ell m+k}} |2 \sin \pi r \alpha| = C_k
\end{equation*}
for each $k=0,1,2, \ldots , \ell -1$.
\end{thm}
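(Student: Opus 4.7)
The plan is to generalize the renormalization approach of Mestel--Verschueren \cite{MV15} by exploiting the periodicity of the Gauss orbit of $\alpha$. Let $\beta_n := q_n\alpha - p_n$, so that $|\beta_n| = \|q_n\alpha\|$ decays exponentially, $\beta_n$ alternates in sign, and $\beta_{n+1} = a_{n+1}\beta_n + \beta_{n-1}$. The first step is to decompose the increment $Q_{n+1}/Q_n$ using the convergent recurrence $q_{n+1} = a_{n+1}q_n + q_{n-1}$: every $r \in \{q_n+1,\ldots,q_{n+1}\}$ has a unique Ostrowski-type representation $r = jq_n + s$ with either $1\le j\le a_{n+1}-1$ and $1\le s\le q_n$, or $j=a_{n+1}$ and $1\le s\le q_{n-1}$. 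Since $(jq_n+s)\alpha\equiv s\alpha+j\beta_n \pmod 1$, this yields
\[
\log\frac{Q_{n+1}(\alpha)}{Q_n(\alpha)} = \sum_{j=1}^{a_{n+1}-1}\sum_{s=1}^{q_n}\log\bigl|2\sin\pi(s\alpha+j\beta_n)\bigr| + \sum_{t=1}^{q_{n-1}}\log\bigl|2\sin\pi(t\alpha+a_{n+1}\beta_n)\bigr|.
\]

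Next I would analyze each inner sum as a perturbation of the original product. Writing $\log|2\sin\pi(x+\delta)| = \log|2\sin\pi x| + \delta\,\pi\cot\pi x + O\!\bigl(\delta^2/\sin^2\pi x\bigr)$, one can expand around $x = \{s\alpha\}$; the \emph{singular} indices $s$ for which $\|s\alpha\|$ is small (which by the three-distance theorem are essentially the multiples of $q_{n-1}$ up to $q_n$, together with $q_n$ itself) must be handled separately because the derivatives of $\log|2\sin\pi x|$ blow up there. By explicit bookkeeping of the singular sine factors in terms of $\beta_{n-1}, \beta_n, \beta_{n+1}$ (using $\beta_{n+1}=a_{n+1}\beta_n+\beta_{n-1}$) and a resummation of the regular part, the increment acquires the asymptotic form
\[
\log\frac{Q_{n+1}(\alpha)}{Q_n(\alpha)} = G(a_{n+1},a_{n+2},\ldots) + E_n, \qquad |E_n| = O(\mu^{-n}),
\]
where $G$ depends only on the tail of the continued fraction expansion and $\mu > 1$ is the dominant eigenvalue of the period matrix $\prod_{i=1}^{\ell}\begin{pmatrix}a_i & 1 \\ 1 & 0\end{pmatrix}$.

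Because $\alpha$ is purely periodic, the function $n\mapsto G(a_{n+1},a_{n+2},\ldots)$ is $\ell$-periodic; write its values as $G_0,\ldots,G_{\ell-1}$. Telescoping the increments over a full period gives
\[
\log Q_{\ell(m+1)+k}(\alpha) - \log Q_{\ell m+k}(\alpha) = S_k + O(\mu^{-\ell m}), \qquad S_k := \sum_{j=0}^{\ell-1} G_{(k+j)\bmod \ell},
\]
with $S_k$ independent of $m$. The crucial algebraic step is then the \emph{closing identity} $S_k = 0$: for $\ell=1$ this is implicit in the fixed-point analysis of \cite{MV15}, and for general $\ell$ it should follow from the invariance of $\alpha$ under the $\ell$-fold shift of the Gauss map together with the explicit form of $G$. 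Given $S_k = 0$, the sequence $\bigl(\log Q_{\ell m+k}(\alpha)\bigr)_{m\ge 1}$ is Cauchy with geometric rate, and exponentiating gives the theorem with $C_k := \exp\lim_m \log Q_{\ell m+k}(\alpha)$.

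The principal obstacle is twofold. First, the singular-term bookkeeping is delicate: the factors $|2\sin\pi s\alpha|$ for $s$ a multiple of $q_{n-1}$ are of order $|\beta_{n-1}|$ and under the shift by $j\beta_n$ become $|2\sin\pi(s\alpha+j\beta_n)| \asymp |k\beta_{n-1}+j\beta_n|$, so matching these across successive levels $n$ requires careful use of the $\beta$-recurrence and forms the technical heart of the expansion of $G$. Second, establishing $S_k = 0$ is not automatic, since without it the sequences $Q_{\ell m + k}(\alpha)$ would drift monotonically; proving it will likely require an intrinsic identity tied to the periodic orbit, which is the main novelty over the golden-mean case treated in \cite{MV15}.
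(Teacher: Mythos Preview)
Your proposal takes a genuinely different route from the paper. The paper does not study ratios $Q_{n+1}/Q_n$; instead it works directly with $Q_{\ell m+k}$ for fixed $k$. Using the identity $q_n\alpha = p_n + e_k b^m$ (where $b$ is the subdominant root of the characteristic polynomial of the $\ell$-step recurrence for $q_n$, and $e_k$ an explicit $k$-dependent constant), the paper rewrites $Q_{\ell m+k}$ as a perturbation of the rational product $\prod_{r=1}^{q_n-1}|2\sin(\pi r p_n/q_n)| = q_n$, and decomposes it as $A_m B_m C_m$ with each factor shown to converge separately to a positive limit. The analogues of the Fibonacci identities $F_n\omega^n \to 1/\sqrt 5$ and $F_{n-1}/F_n \to \omega$ are obtained from an explicit closed form for $q_n$ in terms of a Lehmer sequence. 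No closing identity of the type $S_k=0$ is ever needed.

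Your plan, by contrast, has a genuine gap precisely at the closing identity. You correctly observe that if the increment $\log(Q_{n+1}/Q_n)$ converged to an $\ell$-periodic sequence $(G_j)$ with exponentially small error, then the period sum $S := \sum_j G_j$ (which is the same for every $k$) would have to vanish for the theorem to hold---but you give no mechanism for proving $S=0$ independently. The analogy you draw with \cite{MV15} is misplaced: Mestel and Verschueren do not use a ratio/renormalization argument either; they use exactly the direct $A_m B_m C_m$ decomposition that the present paper generalizes, so there is no hidden $S=0$ identity in their work to appeal to. Without $S=0$ your telescoping only yields $\log Q_{\ell m+k} = mS + O(1)$, which is compatible with the known polynomial bounds $n^{K_1}\le P_n(\alpha)\le n^{K_2}$ and hence cannot be excluded by soft estimates. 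Moreover, the prior step---extracting a limiting $G$ from the singular-term bookkeeping---is left at the level of a heuristic: the factors with $s$ a multiple of $q_{n-1}$ contribute at order $1$, not $o(1)$, and controlling their combination across levels is essentially as hard as the original problem.
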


\begin{cor}\label{Cor:preperiod}	
Suppose $\beta$ has continued fraction expansion of the form $\beta=[a_0;a_1,\ldots, a_h,\overline{a_{h+1},\ldots,a_{h+\ell}}]$ and let $\alpha=[0;\overline{a_{h+1},\ldots,a_{h+\ell}}]$. We then have
	\begin{equation*}
		\lim_{m\to \infty} Q_{h + \ell m +k}(\beta)=\lim_{m\to \infty}Q_{\ell m+k}(\alpha).
	\end{equation*}
\end{cor}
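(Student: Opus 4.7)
The plan is to reduce Corollary \ref{Cor:preperiod} to Theorem \ref{MainThm} by exploiting the fact that $\beta$ and $\alpha$ share the same periodic tail in their continued-fraction expansions. A first, cheap reduction: since $|2\sin\pi r(\beta-a_0)|=|2\sin\pi r\beta|$ for all integers $r$ and $a_0$, the sine product is invariant under integer translation of its argument, so we may assume $a_0=0$ and treat $\beta\in(0,1)$.

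Next I would record the continued-fraction bookkeeping relating $\beta$ to its purely periodic tail $\alpha$. Writing $(p_n,q_n)$ for the $\beta$-convergents and $(\tilde p_n,\tilde q_n)$ for the $\alpha$-convergents, the fact that the partial quotients of $\beta$ from index $h+1$ onward coincide with those of $\alpha$ gives the continuant identity
$$q_{h+m}=q_h\,\tilde q_m + q_{h-1}\,\tilde q_{m-1},\qquad m\geq 0,$$
together with the unimodular Möbius relation $\beta=(p_h+p_{h-1}\alpha)/(q_h+q_{h-1}\alpha)$. These identities couple the orbits $\{r\beta\}$ and $\{r\alpha\}$ in a controlled way and show that the continued-fraction dynamics of $\beta$ past step $h$ is a faithful copy of that of $\alpha$.

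The central step is to re-run the argument used to prove Theorem \ref{MainThm} with $\beta$ in place of $\alpha$. I anticipate that proof to rest on a renormalization of the sine product along the continued fraction, producing a recursion whose asymptotic behavior is driven by the \emph{eventual} periodicity of the partial quotients rather than by pure periodicity per se. Because $\beta$ has the same periodic tail as $\alpha$, the same recursion applies to $Q_n(\beta)$ for all $n>h$ and in particular yields the existence of $\lim_{m\to\infty}Q_{h+\ell m+k}(\beta)$.

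The final and hardest step is to identify this limit with $C_k$. The natural target is to show
$$\log Q_{h+\ell m+k}(\beta)-\log Q_{\ell m+k}(\alpha)\longrightarrow 0\quad\text{as } m\to\infty.$$
Using the Ostrowski numeration of $\{1,\ldots,q_{h+\ell m+k}\}$ with respect to $\beta$'s convergents, the sum $\sum_{r}\log|2\sin\pi r\beta|$ decomposes into blocks indexed by the Ostrowski digits; blocks supported on indices greater than $h$ can be matched, digit by digit and via the Möbius relation above, to the corresponding blocks of $\log Q_{\ell m+k}(\alpha)$, while blocks supported on the preperiodic indices $1,\ldots,h$ produce only a bounded correction. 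The main obstacle is showing that this preperiodic correction actually vanishes in the limit, rather than stabilizing at a nonzero additive constant, so that the two limiting constants truly coincide. This cancellation — essentially a statement that the preperiod's contribution to $\log Q_n$ is asymptotically absorbed into the lower-order terms of the renormalization — is where I expect the technical work of the corollary to concentrate.
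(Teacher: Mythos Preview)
Your plan through the existence step is sound and matches the paper: one re-runs the $A_mB_mC_m$ decomposition with $\beta$ in place of $\alpha$, using the continuant identity (the paper's version is $q_{h+u}(\beta)=q_{h+1}(\beta)q_u(\alpha)+q_h(\beta)p_u(\alpha)$, which you should double-check against yours under the offset convention $q_0=0$, $q_1=1$) to produce $\beta$-analogues of Lemmas~\ref{QnEstimates}, \ref{BasicPropQn} and \ref{lem:qnquot} with constants $c_{h,k}$, $e_{h,k}$ and the same $b=b(\alpha)$.

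The gap is in your identification step. Your proposal to compare $\log Q_{h+\ell m+k}(\beta)$ and $\log Q_{\ell m+k}(\alpha)$ directly via Ostrowski blocks is not the paper's route, and I do not see how to make it work: the two products have different lengths ($q_{h+\ell m+k}(\beta)\neq q_{\ell m+k}(\alpha)$) and different arguments ($\beta\neq\alpha$), and the M\"obius relation $\beta=(p_h+p_{h-1}\alpha)/(q_h+q_{h-1}\alpha)$ does not give any usable pointwise matching between $\{r\beta\}$ and $\{r'\alpha\}$. The ``preperiodic correction vanishes'' statement you flag as the hard part is not a small technicality; as posed it is essentially the whole corollary, and your outline gives no mechanism for the cancellation.

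The paper's identification is both different and much simpler. Once the $A_mB_mC_m$ machinery is re-run for $\beta$, one \emph{inspects the limit formulas}: the limits of $A_m$, $B_m$, $C_m$ depend on the data only through the single scalar $|c_ke_k|$ and the sequence $\xi_{\infty t}=\{t\alpha_{\sigma_k}\}-\tfrac12$. For $\beta$ one gets the same $\alpha_{\sigma_k}$ in the analogue of Lemma~\ref{lem:qnquot} (the quotient $q_{h+\ell m+k-1}(\beta)/q_{h+\ell m+k}(\beta)$ still converges to $\alpha_{\sigma_k}$), and the one algebraic fact that needs checking is
\[
|c_{h,k}\,e_{h,k}|=|c_k\,e_k|,
\]
i.e.\ the product is independent of the preperiod length $h$. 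With this in hand the two limits coincide term by term in the $A_mB_mC_m$ decomposition, and no block-matching of the original sine products is needed. I would redirect your effort from the Ostrowski comparison to proving this single identity.
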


The proof of Theorem \ref{MainThm} (and Corollary \ref{Cor:preperiod}) largely follows that given by Mestel and Verschueren for the special case of the golden mean. Nevertheless, we include the proof in full detail for the sake of completeness. We emphasize that the challenge in generalizing Theorem \ref{thm:mvgolden} to all quadratic irrationals lies in finding appropriate analogs for $(q_n)_{n\geq 0}$ of certain special properties of the Fibonacci sequence $(F_n)_{n\geq 0}=(0,1,1,2,3,5,8,13,\ldots )$. Throughout their proof for the golden mean case, Mestel and Verschueren make heavy use of the identities
\begin{equation*}
F_n\omega^n = \frac{1}{\sqrt{5}} + \mathcal{O}(\omega^{2n}) \quad  (\text{for } n > 0)
\end{equation*}
and
\begin{equation*}
\frac{F_{n-1}}{F_n} = \omega + \mathcal{O}(\omega^{2n}) \quad (\text{for } n>0), 
\end{equation*}
which do not have obvious analogs for the more general case of a quadratic irrational $\alpha$. However, we will see that similar identities can indeed be formulated for the sequence $(q_n)_{n\geq 0}$ of best approximation denominators of $\alpha$, and with these established the proof of Mestel and Verschueren easily carries over. 

To close this introduction, we give a brief outline of the paper. The existence of $\lim_{m\rightarrow \infty} Q_{\ell m +k}$ claimed by Theorem \ref{MainThm} is verified by splitting the product $Q_{\ell m+k}$ into three more manageable products 
\begin{align*}
Q_{\ell m + k} &= A_mB_mC_m \\
&= \left| 2q_n \sin \pi e_kb^m \right| \cdot \left| \prod_{t=1}^{q_n-1} \frac{s_{mt}}{2\sin(\pi t/q_n)} \right| \cdot \prod_{t=1}^{q_n-1}\left( 1- \frac{s_{m0}^2}{s_{mt}^2}\right)^{1/2},
\end{align*}
where $n=\ell m +k$, $e_k$ is a $k$-dependent constant, and $s_{mt}$ is a perturbed rational sine function to be introduced. This decomposition is explained in detail in Section~\ref{sec:decomp}, where we also show the straightforward convergence of $A_m$ as $m\rightarrow \infty$. The convergence of $B_m$ and $C_m$ is more involved, and is therefore treated in subsequent Sections \ref{sec:Cm} and \ref{sec:Bm}. Prior to this, in Section \ref{sec:propqn}, we establish analogs for $(q_n)_{n\geq 1}$ of the above-mentioned Fibonacci indentities. In particular, we point out a connection to so-called Lehmer sequences, which we consider to be of independent interest (see Theorem \ref{thm:lehmer}). Finally, we summarize the proofs of Theorem \ref{MainThm} and Corollary \ref{Cor:preperiod} in Section \ref{sec:proof}, and gather some concluding remarks on a conjecture of Lubinsky in Section \ref{sec:remarks}. First, however, we introduce necessary notation and some general theory on continued fraction expansions in the following section.


\section{Preliminaries \label{sec:prelim}}

\subsection{Notation \label{sec:notation}}
Throughout the paper, the following notation will be used:

\begin{itemize}
\item We denote by $\{x\}=x-\lfloor x \rfloor$ the fractional part of $x \in \RR$. Moreover, for $t\in \ZZ$ and $m\in \NN$, we denote by $t \bmod m$ the unique remainder of $t/m$ in $\{ 0,1, \ldots , m-1\}$.

\item We use standard big $\mathcal{O}$ notation, and write $f(x) = \mathcal{O}(g(x))$ as $x \rightarrow \infty$ if two functions $f$ and $g$ satisfy $|f(x)|\leq C|g(x)|$ for some $C>0$ and all sufficiently large $x \in \RR$. Moreover, we write $f(x)\sim g(x)$ if $\lim_{x\rightarrow \infty} f(x)/g(x) = 1$.

\item Following Mestel and Verschueren \cite{MV15}, we introduce a generalized sum and product notation: given a summable sequence $(b_r)_{r\in\NN}$, we define the step function $f(t)=b_r$ for $t\in [r, r+1)$. Then for any $x, y \in \RR$ where $x\leq y$, we let 
$$\sum_{r=x}^y b_r := \int_x^y f(t) \, dt.$$
Moreover, if $f(t)>0$ on $[x,y]$, we let 
$$\prod_{r=x}^y b_r := \exp \left( \sum_{r=x}^y \log b_r \right) = \exp \left( \int_x^y \log f(t) \, dt \right). $$ 
This allows us to define sums and products with real, rather than just integer, upper and lower bounds. Note in particular that this definition coincides with normal summation and product notation whenever $x,y \in \ZZ$.
\end{itemize}

\subsection{Permutation operators \label{sec:permutation}}
We will use bold letters to denote vectors. Whenever we have an $\ell$-dimensional, integer-valued vector, for instance $\bsd =(d_1, \ldots , d_{\ell}) \in \NN^{\ell}$, we use the corresponding greek letter (in this case $\delta$) to denote the real number with continued fraction expansion $\delta = [0;\overline{d_1,\ldots,d_{\ell}}]$.

We introduce two families of permutation operators acting on $\NN^{\ell}$: Let $\tau_u : \NN^{\ell} \rightarrow \NN^{\ell}$ be defined by
\begin{equation}
\label{eq:tau}
\tau_u(\bsd) := (d_{u+1}, \ldots , d_{\ell}, d_1, \ldots , d_u) , \quad u \in \{0,1, \ldots , \ell-1 \}, 
\end{equation}
and similarly $\sigma_u : \NN^{\ell} \rightarrow \NN^{\ell}$ be defined by
\begin{equation}
\label{eq:sigma}
\sigma_u(\bsd) := (d_{u-1}, \ldots , d_1, d_{\ell}, \ldots , d_u), \quad u \in \{ 2, 3, \ldots , \ell -1 \},
\end{equation}
with $\sigma_0(\bsd)=(d_{\ell -1}, \ldots , d_1, d_{\ell})$ and $\sigma_1(\bsd)=(d_{\ell}, \ldots , d_1)$. 
Moreover, we use $\delta_{\tau_u}$ and $\delta_{\sigma_u}$ to denote the real numbers with continued fraction expansions given by $\tau_u(\bsd)$ and $\sigma_u(\bsd)$, respectively. That is, we write
\begin{equation}
\label{eq:deltatau}
\delta_{\tau_u}=[0;\overline{d_{u+1}, \ldots , d_{\ell}, d_1, \ldots , d_u}]
\end{equation}
and 
\begin{equation}
\label{eq:deltasigma}
\delta_{\sigma_u}=[0;\overline{d_{u-1}, \ldots , d_1, d_{\ell}, \ldots , d_u}] .
\end{equation}
Our motivation for introducing the operator $\tau_u$ is explained by Lemma \ref{BasicPropQP} in the following subsection. The need to introduce $\sigma_u$ is less evident, but will be clear from Lemma \ref{lem:qnquot} in Section \ref{sec:consequences}, where we describe the asymptotic behaviour of the sequence of denominator quotients $(q_{n-1}/q_n)_{n\geq 1}$ for a quadratic irrational number.
 
\subsection{Continued fraction expansions}
We briefly review some facts about continued fraction expansions of real numbers. In general, for any irrational, real $\alpha \in (0,1)$ whose continued fraction expansion is given by 
$$[0; a_1, a_2, \ldots ] ,$$ 
we denote its $n$th convergent by $p_n/q_n$. The numerators $p_n$ and denominators $q_n$ are given recursively by 
\begin{equation*}
\begin{aligned}
q_0=0, \quad &q_1=1  \quad &q_{n+1}=a_{n}q_n+q_{n-1} ;\\
p_0=1, \quad &p_1=0  \quad &p_{n+1}=a_{n}p_n+p_{n-1} .
\end{aligned}
\end{equation*}
Note that the indexing of $p_n$ and $q_n$ is offset by one compared to what is normally seen in literature. As a consequence, the $n$th convergent $p_n/q_n$ is smaller than $\alpha$ for every odd value of $n$, and greater than $\alpha$ for every even value of $n$. It follows readily from the recurrences above that 
\begin{equation}
\label{eq:pq}
p_nq_{n+1}-p_{n+1}q_n = (-1)^n,
\end{equation}
and as a consequence of this identity we have the error bound
\begin{equation}
\label{eq:stderror}
\left| \alpha - \frac{p_n}{q_n} \right| < \frac{1}{q_{n+1}q_n}
\end{equation}
for the $n$th convergent of $\alpha$. 
\begin{rem*}
Whenever it is \emph{not} clear from context, we write $p_n(\alpha)$ and $q_n(\alpha)$ to indicate that these are the best approximation numerators and denominators corresponding to the real number $\alpha$.
\end{rem*}

\begin{thm}[Ostrowski representation]\label{OstrowskiRepr}
Let $\alpha \in (0,1)$ be an irrational number with continued fraction expansion $[0;a_1, a_2, \ldots]$ and best approximation denominators $(q_n)_{n\geq 1}$. Then every non-negative integer $N$ has a unique expansion
\begin{equation}
\label{eq:Ostrowski}
N=\sum_{n=1}^z v_n q_n ,
\end{equation}
where:
\begin{enumerate}[label=\roman*)]
\item $0\leq v_1< a_1-1$ and $0\leq v_n \leq a_n$ for $n>1$.
\item If $v_n=a_n$ for some $n$, then $v_{n-1}=0$.
\item $z=z(N)=\mathcal{O}(\log(N))$
\end{enumerate}
We refer to \eqref{eq:Ostrowski} as the Ostrowski representation of $N$ in base $\alpha$.
\end{thm}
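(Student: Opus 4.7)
The plan is to establish existence by a greedy algorithm, uniqueness by induction, and the size bound using the exponential growth of $q_n$.

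For existence, given $N\geq 1$, I would let $z$ be the largest integer with $q_z\leq N$, so $q_z\leq N<q_{z+1}$. Setting $v_z:=\lfloor N/q_z\rfloor$, the recurrence $q_{z+1}=a_z q_z+q_{z-1}$ together with $N<q_{z+1}$ forces $v_z\leq a_z$; and by construction $v_z\geq 1$. Writing $N_1:=N-v_z q_z$, one observes $N_1<q_z$, and moreover $N_1<q_{z-1}$ in the boundary case $v_z=a_z$ (since then $N_1<q_{z+1}-a_z q_z=q_{z-1}$). Iterating this step on $N_1$ and so on produces the desired digits $v_n$, with the convention that intermediate $v_n$ vanish whenever the greedy index drops by more than one. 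Condition (i) on $v_1$ corresponds to the terminal step, in which the remainder lies in $\{0,\dots,a_1-1\}$ and is assigned to $v_1 q_1=v_1$; condition (ii) is precisely what the boundary case above produces.

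For uniqueness, the key auxiliary inequality is
\begin{equation*}
\sum_{n=1}^{z-1} v_n q_n < q_z
\end{equation*}
whenever $(v_n)$ satisfy (i) and (ii). I would prove this by induction on $z$: if $v_{z-1}<a_{z-1}$, then by the inductive hypothesis $\sum_{n=1}^{z-2} v_n q_n<q_{z-1}$, so the total is at most $(a_{z-1}-1)q_{z-1}+q_{z-1}-1<a_{z-1}q_{z-1}+q_{z-2}=q_z$; if $v_{z-1}=a_{z-1}$, then by (ii) $v_{z-2}=0$, and the remainder is bounded (by the inductive hypothesis applied at level $z-2$) by $q_{z-2}$, giving the same conclusion. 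With this inequality in hand, suppose $N=\sum v_n q_n=\sum v_n' q_n$ are two admissible representations, and let $z^\ast$ be the largest index where they differ; then $|v_{z^\ast}-v_{z^\ast}'|\,q_{z^\ast}\leq \sum_{n<z^\ast}(v_n+v_n')q_n<2q_{z^\ast}$, which together with (ii) can be pushed to a contradiction.

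Finally, the bound $z=\mathcal{O}(\log N)$ follows from the observation that the recurrence $q_{n+1}=a_n q_n+q_{n-1}$ with $a_n\geq 1$ forces $q_n\geq F_n$ (the Fibonacci numbers), hence $q_n$ grows at least like $\varphi^n$ for $\varphi=(1+\sqrt{5})/2$. Since $z$ is defined by $q_z\leq N<q_{z+1}$, this yields $z\leq \log N/\log\varphi+\mathcal{O}(1)$.

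The only nontrivial step is the auxiliary inequality $\sum_{n=1}^{z-1} v_n q_n<q_z$ under the constraints (i)--(ii); this is the combinatorial heart of the argument and drives both the well-definedness of the greedy algorithm (by ensuring the remainder at each step is strictly smaller than the next lower $q_n$) and uniqueness.
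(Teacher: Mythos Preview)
The paper does not give its own proof of this theorem; it simply cites Kuipers--Niederreiter \cite[p.~126]{KNUnidist}. Your proposal is the standard argument (greedy construction plus the key inequality $\sum_{n<z} v_n q_n < q_z$), and it is essentially correct.

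One small inefficiency in your uniqueness step: you bound $|v_{z^\ast}-v_{z^\ast}'|\,q_{z^\ast}$ by $\sum_{n<z^\ast}(v_n+v_n')q_n<2q_{z^\ast}$ via the triangle inequality and then appeal vaguely to condition~(ii). It is cleaner to observe that the auxiliary inequality gives $\sum_{n<z^\ast} v_n q_n \in [0,q_{z^\ast})$ and likewise for the primed digits, so their \emph{difference} lies in $(-q_{z^\ast},q_{z^\ast})$; hence $|v_{z^\ast}-v_{z^\ast}'|<1$ and the contradiction is immediate without any further use of~(ii). (Your route can also be completed: if $|v_{z^\ast}-v_{z^\ast}'|=1$, say $v_{z^\ast}=v_{z^\ast}'+1$, then $q_{z^\ast}=\sum_{n<z^\ast}(v_n'-v_n)q_n\leq \sum_{n<z^\ast} v_n' q_n<q_{z^\ast}$, already a contradiction.)

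A side remark: your terminal step yields $v_1\in\{0,\dots,a_1-1\}$, i.e.\ $v_1<a_1$, whereas the statement as printed has $v_1<a_1-1$; the latter appears to be a typo in the paper, and your bound is the correct one.
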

\noindent The proof of Theorem \ref{OstrowskiRepr} can be found in \cite[p.~126]{KNUnidist}.

\subsection{Periodic continued fraction expansions \label{cfexpansions}}
Suppose now that $\alpha$ is an irrational with $\ell$-periodic continued fraction expansion $\alpha=[0; \overline{a_1, \ldots , a_{\ell}}]$. For this special case, further properties of the convergents $p_n/q_n$ of $\alpha$ can be established. The following lemma summarizes useful relations for $(p_n)_{n\geq0}$ and $(q_n)_{n\geq 0}$ established by Perron in \cite[p.~14--17]{Perron}. 
\begin{lem}\label{BasicPropQP}
Let $\alpha=[0;\overline{a_1,\ldots,a_{\ell}}]$ and for every $k\in\{0,\ldots,\ell-1\}$ let $\tau_k$ be defined as in \eqref{eq:tau}.
	\begin{enumerate}[label=(\alph*)]
		\item \label{BasicPropQPFirst} For all $n, m \in \NN$, we have
			$$p_{n+m}(\alpha)q_{m}(\alpha) - p_{m}(\alpha)q_{n+m}(\alpha)=
			(-1)^{m-1}q_{n}(\alpha_{\tau_{m \bmod{\ell}}}),$$
			where $\alpha_{\tau_k}$ is defined in \eqref{eq:deltatau}.
		\item \label{BasicPropQPSecond} For all $r\in \NN_0$, we have 
			$$q_{\ell+r}(\alpha)=q_{\ell+1}(\alpha)q_{r}(\alpha)+q_{\ell}(\alpha)p_{r}(\alpha)$$ 
			and 
			$$p_{\ell+r}(\alpha)=p_{\ell}(\alpha)p_{r}(\alpha)+p_{\ell+1}(\alpha)q_{r}(\alpha).$$  
		\item \label{BasicPropQPThird} For every $k\in \{1,2, \ldots , \ell-1\}$, we have
			 $$q_{\ell-1}(\alpha_{\tau_k})=p_{\ell}(\alpha_{\tau_{k-1}}).$$
	\end{enumerate}
\end{lem}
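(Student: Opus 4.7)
The plan is to deduce all three parts from the matrix representation of continued fraction convergents. Setting $M_k := \begin{pmatrix} a_k & 1 \\ 1 & 0 \end{pmatrix}$, a straightforward induction on the recurrences for $p_n$ and $q_n$ yields the identity
\[
M_1 M_2 \cdots M_n = \begin{pmatrix} q_{n+1}(\alpha) & q_n(\alpha) \\ p_{n+1}(\alpha) & p_n(\alpha) \end{pmatrix}.
\]
Combined with the $\ell$-periodicity of $(a_k)$, this identity will yield (a), (b), and (c) through three different factorisations of the matrix product.

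For (a), I would split the product for $\alpha$ at index $m$, writing $M_1 \cdots M_{n+m-1} = (M_1 \cdots M_m)\,(M_{m+1} \cdots M_{n+m-1})$. The first factor is the $(p,q)$-matrix for $\alpha$ at index $m+1$. By periodicity, the second factor uses partial quotients that coincide with the first $n-1$ partial quotients of $\alpha_{\tau_u}$ for $u = m \bmod \ell$, so it equals the corresponding matrix for $\alpha_{\tau_u}$. Multiplying out and computing the combination $p_{n+m}q_m - p_m q_{n+m}$, the contributions involving $p_n(\alpha_{\tau_u})$ cancel and one is left with $(p_{m+1}q_m - p_m q_{m+1})\, q_n(\alpha_{\tau_u})$, where the prefactor equals $(-1)^{m-1}$ by \eqref{eq:pq}.

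Part (b) is essentially the same decomposition applied with $m = \ell$: since $a_{\ell+1}, \ldots, a_{\ell+r}$ coincide with $a_1, \ldots, a_r$, the second factor becomes the $(p,q)$-matrix of $\alpha$ at index $r+1$, and the two stated formulas are precisely the $(1,2)$ and $(2,2)$ entries of the resulting matrix product. For (c), the key observation is that the partial-quotient sequence of $\alpha_{\tau_{k-1}}$ is obtained from that of $\alpha_{\tau_k}$ by prepending $a_k$. Consequently the length-$\ell$ matrix product for $\alpha_{\tau_{k-1}}$ factors as $M_k$ times the length-$(\ell-1)$ matrix product for $\alpha_{\tau_k}$; since the second row of $M_k$ is $(1,0)$, the $(2,2)$ entry of this factorisation reads off directly as $p_\ell(\alpha_{\tau_{k-1}}) = q_{\ell-1}(\alpha_{\tau_k})$.

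The main obstacle is bookkeeping rather than anything conceptual: the paper offsets the convergent indices by one relative to the usual convention, so one must track carefully which matrix product gives which pair $(p_n,q_n)$; one also has to verify that the periodicity translation is $u = m \bmod \ell$ rather than $m-1 \bmod \ell$, and that the restriction $k \in \{1,\ldots,\ell-1\}$ in (c) avoids wrap-around in the cyclic relabelling $\tau_k \mapsto \tau_{k-1}$. Once these indexing conventions are pinned down, each of (a)--(c) reduces to a one-line matrix-multiplication check.
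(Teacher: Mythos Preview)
Your matrix-representation approach is correct and complete in outline. The paper, however, does not supply its own proof of this lemma: it simply cites Perron's \emph{Die Lehre von den Kettenbr\"uchen} (pages 14--17) for all three identities. So there is nothing to compare against beyond noting that what you have written is essentially the standard modern derivation, and is in all likelihood equivalent to Perron's original argument.

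One small slip to fix: in part (b), the formulas for $q_{\ell+r}$ and $p_{\ell+r}$ are the $(1,1)$ and $(2,1)$ entries of the product
\[
\begin{pmatrix} q_{\ell+1} & q_\ell \\ p_{\ell+1} & p_\ell \end{pmatrix}
\begin{pmatrix} q_r & q_{r-1} \\ p_r & p_{r-1} \end{pmatrix},
\]
not the $(1,2)$ and $(2,2)$ entries as you wrote. Your index-tracking caveats about the paper's offset convention and about $u = m \bmod \ell$ are well placed; once those are pinned down, the computations in (a)--(c) go through exactly as you describe.
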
 

Let us now associate to $\alpha=[0;\overline{a_1, \ldots a_{\ell}}]$ the constant
\begin{equation}
\label{eq:c}
c(\alpha):= q_{\ell+1}(\alpha) + p_{\ell}(\alpha).
\end{equation}
This constant will play an important role as we go forward. As a first application, it appears in the following recursion formula for $(q_n)_{n\geq 0}$.
\begin{lem}\label{qnRecursion}
	Let $\alpha=[0;\overline{a_1,\ldots,a_{\ell}}]$ and let $(q_n)_{n\geq0}$ be the sequence of best approximation denominators of $\alpha$. For all $n\geq2\ell$ we have
	\begin{equation}\label{eq:qnrecur}
	q_n= c(\alpha)q_{n-\ell} + (-1)^{\ell-1}q_{n-2\ell},
	\end{equation}
	with $c(\alpha)$ given in \eqref{eq:c}.
\end{lem}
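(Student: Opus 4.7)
The plan is to derive the recursion directly by applying part (b) of Lemma \ref{BasicPropQP} twice and then eliminating the $p$-terms via the basic identity \eqref{eq:pq}. The hypothesis $n\geq 2\ell$ ensures that both $n-\ell\geq \ell$ and $n-2\ell\geq 0$, so we may use the formula with $r:=n-\ell\geq 0$ in the first step and with $r:=n-2\ell\geq 0$ in the second.

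First I would write, using Lemma \ref{BasicPropQP}\ref{BasicPropQPSecond} with $r=n-\ell$,
\begin{equation*}
q_n = q_{\ell+1}\,q_{n-\ell} + q_\ell\, p_{n-\ell}.
\end{equation*}
Then I would apply the same lemma with $r=n-2\ell$ to expand both
\begin{equation*}
q_{n-\ell}=q_{\ell+1}\,q_{n-2\ell}+q_\ell\,p_{n-2\ell},\qquad p_{n-\ell}=p_\ell\,p_{n-2\ell}+p_{\ell+1}\,q_{n-2\ell}.
\end{equation*}
Substituting the second of these into the expression for $q_n$ gives $q_n$ as a linear combination of $q_{n-\ell}$, $q_{n-2\ell}$ and $p_{n-2\ell}$. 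Using the first relation to solve for $p_{n-2\ell}$ in terms of $q_{n-\ell}$ and $q_{n-2\ell}$ (valid since $q_\ell\neq 0$), and substituting back, I would obtain
\begin{equation*}
q_n=(q_{\ell+1}+p_\ell)\,q_{n-\ell}+\bigl(q_\ell\,p_{\ell+1}-p_\ell\,q_{\ell+1}\bigr)\,q_{n-2\ell}.
\end{equation*}
The coefficient of $q_{n-\ell}$ is exactly $c(\alpha)$ by definition \eqref{eq:c}, while the coefficient of $q_{n-2\ell}$ equals $-(p_\ell q_{\ell+1}-p_{\ell+1}q_\ell)=-(-1)^{\ell-1}\cdot(-1)=(-1)^{\ell-1}$ by \eqref{eq:pq} (with the index shift from that identity producing $(-1)^{\ell-1}$ after the sign flip). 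This yields \eqref{eq:qnrecur}.

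There is no real obstacle here; the only delicate point is keeping the parity of the sign right when applying \eqref{eq:pq}, and checking that the index ranges in Lemma \ref{BasicPropQP}\ref{BasicPropQPSecond} are respected, both of which follow from the bound $n\geq 2\ell$. The proof is essentially bookkeeping once the two instances of part (b) are written down.
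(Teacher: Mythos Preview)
Your proof is correct and is actually more direct than the paper's. The paper proceeds by induction: it verifies the two base cases $n=2\ell$ and $n=2\ell+1$ using Lemma~\ref{BasicPropQP}\ref{BasicPropQPSecond} (and for $n=2\ell+1$ also part~\ref{BasicPropQPFirst}), and then carries out the inductive step via the standard one-step recursion $q_{n+1}=a_{n\bmod\ell}\,q_n+q_{n-1}$. Your argument bypasses induction entirely by applying Lemma~\ref{BasicPropQP}\ref{BasicPropQPSecond} twice and eliminating $p_{n-2\ell}$ algebraically, invoking only \eqref{eq:pq} at the end. This buys you a shorter, self-contained computation that never touches part~\ref{BasicPropQPFirst} or the elementary recursion; the paper's route, on the other hand, makes the structural reason for the $\ell$-step recursion (inheritance from the one-step recursion plus periodicity of the $a_n$) more transparent. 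Your parenthetical sign bookkeeping is a bit cryptic---it would read more cleanly to just write $q_\ell p_{\ell+1}-p_\ell q_{\ell+1}=-(-1)^\ell=(-1)^{\ell-1}$ directly from \eqref{eq:pq} with $n=\ell$---but the conclusion is right.
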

\begin{proof}[Proof by induction]
For $n=2\ell$, the right hand side in \eqref{eq:qnrecur} reads
$$c(\alpha)q_{\ell} + (-1)^{\ell-1}q_{0}= q_{\ell}q_{\ell+1}+q_{\ell}p_{\ell}.$$
It follows from Lemma \ref{BasicPropQP}\ref{BasicPropQPSecond} with $r=\ell$ that also 
$$q_{2\ell}= q_{\ell}q_{\ell+1}+q_{\ell}p_{\ell},$$
so \eqref{eq:qnrecur} holds for $n=2\ell$. Now let $n=2\ell +1$. The right hand side in \eqref{eq:qnrecur} then reads
$$c(\alpha)q_{\ell+1}+(-1)^{\ell-1}q_1= q_{\ell+1}^2 + p_{\ell}q_{\ell+1}+(-1)^{\ell-1}q_1 = q_{\ell+1}^2+p_{\ell+1}q_{\ell},$$
where the last equality follows from Lemma \ref{BasicPropQP}\ref{BasicPropQPFirst} with $m=\ell$ and $n=1$. Again, using Lemma \ref{BasicPropQP}\ref{BasicPropQPSecond} with $r=\ell+1$, we have 
$$q_{2\ell+1}=q_{\ell+1}^2+p_{\ell+1}q_{\ell},$$ 
so \eqref{eq:qnrecur} holds for $n=2\ell+1$. 

For general $n> 2\ell+1$, we have
\begin{equation*}
q_{n+1}=a_{n}q_n + q_{n-1} = a_{n \bmod \ell} q_n + q_{n-1},
\end{equation*}
where we understand $a_0$ as $a_{\ell}$. Using the induction hypothesis for $q_n$ and $q_{n-1}$, and the fact that $n \bmod \ell = (n-\ell) \bmod \ell = (n-2\ell) \bmod \ell$, we get
\begin{align*}
q_{n+1}&=c(\alpha) \left( a_{n \bmod \ell} q_{n-\ell} + q_{n-\ell-1} \right) + (-1)^{\ell-1} \left( a_{n \bmod \ell} q_{n-2\ell}+q_{n-2\ell-1} \right)\\
&= c(\alpha) q_{n+1-\ell} + (-1)^{\ell-1} q_{n+1-2\ell}.
\end{align*}
This completes the proof of Lemma \ref{qnRecursion}.
\end{proof}

We complete this section by observing that the constant $c(\alpha)$ is, in a sense, independent of the permutation operators $\tau_u$ and $\sigma_u$ introduced in Section \ref{sec:permutation}.
\begin{lem}\label{BasicPropC}
Let $\alpha=[0; \overline{a_1, \ldots, a_{\ell}}]$, and let $c(\alpha)$ be given in \eqref{eq:c}. Moreover, let $\alpha_{\tau_u}$ and $\alpha_{\sigma_u}$ be defined as in \eqref{eq:deltatau} and \eqref{eq:deltasigma}. 
We have
\begin{equation}\label{eq:BasicPropC}
c(\alpha) = c(\alpha_{\tau_u})=c(\alpha_{\sigma_u}),
\end{equation}
for every $u \in \{ 0, 1, \ldots , \ell-1 \}$.
\end{lem}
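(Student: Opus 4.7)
The plan is to interpret $c(\alpha)$ as the trace of a single $2\times 2$ matrix and then exploit invariance of the trace under conjugation and transposition. Setting $M(a) := \begin{pmatrix} a & 1 \\ 1 & 0 \end{pmatrix}$, a straightforward induction from the recurrences defining $(p_n)_{n\geq 0}$ and $(q_n)_{n\geq 0}$ in this paper's indexing yields
\begin{equation*}
M(a_1)\, M(a_2) \cdots M(a_n) = \begin{pmatrix} q_{n+1}(\alpha) & q_n(\alpha) \\ p_{n+1}(\alpha) & p_n(\alpha) \end{pmatrix}.
\end{equation*}
Taking $n=\ell$, the diagonal entries are $q_{\ell+1}(\alpha)$ and $p_\ell(\alpha)$, so $c(\alpha) = \operatorname{tr}\!\bigl( M(a_1)\cdots M(a_\ell) \bigr)$.

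For the $\tau_u$ identity, the partial quotients of $\alpha_{\tau_u}$ are the cyclic shift $a_{u+1},\ldots,a_\ell,a_1,\ldots,a_u$, so by the same matrix identity, $c(\alpha_{\tau_u})$ is the trace of $M(a_{u+1})\cdots M(a_\ell)M(a_1)\cdots M(a_u)$. This product is the conjugate of $M(a_1)\cdots M(a_\ell)$ by $M(a_1)\cdots M(a_u)$, and invariance of trace under conjugation gives $c(\alpha_{\tau_u})=c(\alpha)$.

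For the $\sigma_u$ identity, I would first observe, by directly comparing \eqref{eq:tau} and \eqref{eq:sigma} (including the special cases $u=0$ and $u=1$), that $\sigma_u(\bsd)$ is nothing but the reversal of $\tau_{u-1}(\bsd)$, with the convention $\tau_{-1} := \tau_{\ell-1}$. Since each matrix $M(a)$ is symmetric, reversing the order of a product of such matrices amounts to transposing it. Thus the matrix associated to $\alpha_{\sigma_u}$ is the transpose of the matrix associated to $\alpha_{\tau_{u-1}}$, and since the trace is preserved under transposition, $c(\alpha_{\sigma_u})=c(\alpha_{\tau_{u-1}})=c(\alpha)$.

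There is no substantial obstacle here: the only things to check are the one-line matrix identity (which requires a little care because of the paper's slightly shifted indexing of $p_n,q_n$), and the combinatorial observation that $\sigma_u$ is the reversal of $\tau_{u-1}$ in every case of the piecewise definition of $\sigma_u$. Both reduce to unpacking notation.
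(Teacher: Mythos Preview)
Your proof is correct and takes a genuinely different, cleaner route than the paper's. The paper expresses $q_{n+1}(\delta)$ and $p_n(\delta)$ as determinants of $n\times n$ tridiagonal matrices $A_n(\bsd)$ and $B_n(\bsd)$, so that $c(\alpha_{\tau_u}) = \det A_\ell(\tau_u(\bsa)) + \det B_\ell(\tau_u(\bsa))$, and then verifies the $\tau$-invariance one shift at a time via Laplace expansions showing $\det A_\ell(\tau_u(\bsa)) - \det B_\ell(\tau_{u+1}(\bsa)) = \det A_\ell(\tau_{u+1}(\bsa)) - \det B_\ell(\tau_u(\bsa))$. For the $\sigma$-part, the paper reduces to $\sigma_0$ via the decomposition $\sigma_u = \sigma_0\circ\tau_u$ and then checks $c(\alpha_{\sigma_0}) = c(\alpha_{\tau_{\ell-1}})$ by another Laplace-expansion comparison of determinants. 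Your approach instead packages $c(\alpha)$ as the trace of the $2\times 2$ product $M(a_1)\cdots M(a_\ell)$ and reads off both invariances immediately from $\operatorname{tr}(PQ)=\operatorname{tr}(QP)$ and $\operatorname{tr}(X)=\operatorname{tr}(X^\top)$, together with the observation that $\sigma_u$ is the reversal of $\tau_{u-1}$. Your argument is conceptually more transparent and avoids all computations with large tridiagonal determinants; the paper's argument is more hands-on but arrives at the same conclusions with considerably more work.
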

\begin{proof}
Given an integer vector $\bsd =(d_1, \ldots , d_{\ell}) \in \NN^{\ell}$ and the corresponding irrational number $\delta = [0; \overline{d_1, \ldots , d_{\ell}}]$, we define two sequences of matrices $A_n(\bsd), B_n(\bsd) \in \NN^{n \times n}$, where $A_0(\bsd):=\begin{pmatrix}1\end{pmatrix}$, $A_1(\bsd):=\begin{pmatrix}d_1\end{pmatrix}$, $B_0(\bsd):=\begin{pmatrix}1\end{pmatrix}$, $B_1(\bsd):=\begin{pmatrix}0\end{pmatrix}$ and 
\begin{align*}
		A_n(\bsd)=\begin{pmatrix}
		d^{(\ell)}_1    & -1         & 0         & \dots  & 0   \\   
		1           & d^{(\ell)}_2 & -1         & \ddots & \vdots\\      
		0            & 1        & d^{(\ell)}_3 & \ddots & 0      \\
		\vdots       & \ddots    & \ddots    & \ddots & -1 \\
		0            & \ldots    & 0         & 1     & d^{(\ell)}_n \\   
		\end{pmatrix} 
		\text{, } 
		B_n(\bsd)=
		\begin{pmatrix}
		0      & -1         & 0         & \dots  & 0   \\   
		1     & d^{(\ell)}_1 & -1         & \ddots & \vdots\\      
		0      & 1        & d^{(\ell)}_2 & \ddots & 0      \\
		\vdots & \ddots    & \ddots    & \ddots & -1 \\
		0      & \ldots    & 0         & 1     & d^{(\ell)}_{n-1} \\   
		\end{pmatrix},
	\end{align*}
with $d^{(\ell)}_j:=d_{j \bmod \ell}$, and where we understand $d_0$ as $d_{\ell}$. It is then easily checked that 
\begin{equation*}
q_{n+1}(\delta) = \det A_n(\bsd)
\quad \text{ and } \quad
p_n(\delta) = \det B_n(\bsd),
\end{equation*}
where $p_n(\delta)/q_n(\delta)$ is the $n$th convergent of $\delta$ (see e.g.\ \cite[p.~10--11]{Perron}). In particular, for every $u \in \{ 0,1, \ldots , \ell-1\}$, we have
\begin{equation*}
c(\alpha_{\tau_u}) = \det A_{\ell} \left( \tau_u(\bsa)\right) + \det B_{\ell}\left( \tau_u(\bsa) \right)
\end{equation*}
and
\begin{equation*}
c(\alpha_{\sigma_u}) = \det A_{\ell} \left( \sigma_u(\bsa)\right) + \det B_{\ell}\left( \sigma_u(\bsa) \right),
\end{equation*}
where $\bsa=(a_1, \ldots , a_{\ell})$.

Let us first show that 
\begin{equation}
\label{eq:ctauisc}
c(\alpha_{\tau_u}) = c(\alpha) \quad \text{ for } \quad u = 0,1, \ldots , \ell-1.
\end{equation}
This is clearly the case when $u=0$, as $\tau_0$ is the identity operator on $\NN^{\ell}$ and $\alpha_{\tau_0}=\alpha$. For $u>0$, one can show that
\begin{equation*}
\det A_{\ell} \left( \tau_u (\bsa) \right) - \det B_{\ell} \left( \tau_{u+1} (\bsa) \right) = \det A_{\ell} \left( \tau_{u+1}(\bsa)\right) - \det B_{\ell} \left( \tau_u (\bsa) \right).
\end{equation*}
(This is attained by taking the Laplace expansion along appropriate rows and columns of the matrices above.) It follows that 
\begin{align*}
c(\alpha_{\tau_{u+1}}) &= \det A_{\ell} \left( \tau_{u+1}(\bsa)\right) + \det B_{\ell} \left( \tau_{u+1}(\bsa)\right) \\
&= \det A_{\ell} \left( \tau_{u}(\bsa)\right) + \det B_{\ell} \left( \tau_{u}(\bsa)\right) = c(\alpha_{\tau_u}),
\end{align*}
and thus \eqref{eq:ctauisc} holds.

Now let us verify that 
\begin{equation}
\label{eq:csigmaisc}
c(\alpha_{\sigma_u}) = c(\alpha) \quad \text{ for } \quad u = 0,1, \ldots , \ell-1.
\end{equation}
We note first that the operator $\sigma_u$ (for $u\neq 0$) can be expressed as a composition of $\tau_u$ and $\sigma_0$; namely 
$$\sigma_u (\bsa) = \sigma_0 (\tau_u(\bsa)) , \quad u \in \{ 1, 2, \ldots , \ell-1 \} .$$
Thus, if we can verify that 
\begin{equation}
\label{eq:sigmazero}
c(\alpha_{\sigma_0}) = c(\alpha),
\end{equation}
then the general case \eqref{eq:csigmaisc} will follow from \eqref{eq:ctauisc}. In fact, for $\sigma_0$ it is easily seen that
$$\det A_{\ell}(\sigma_0(\bsa)) = \det A_{\ell}(\tau_{\ell-1}(\bsa)),$$
and by Laplace expansion one can verify that also
$$\det B_{\ell}(\sigma_0(\bsa)) = \det B_{\ell}(\tau_{\ell-1}(\bsa)).$$
It follows that $c(\alpha_{\sigma_0})=c(\alpha_{\tau_{\ell-1}})$, which by \eqref{eq:ctauisc} implies \eqref{eq:sigmazero}. This completes the proof of Lemma \ref{BasicPropC}.
\end{proof}


\section{Properties of the sequence $(q_n)_{n\geq 0}$} \label{sec:propqn}
The main focus in this section is to attain a closed form for the sequence of best approximation denominators $(q_n)_{n\geq 0}$ for the irrational $\alpha=[0;\overline{a_1, \ldots, a_{\ell}}]$. We will see that having such a closed form enables us to formulate analogs of known properties for the Fibonacci sequence $(F_n)_{n\geq 0}=(0,1,1,2,3,5,8 \ldots )$, most notably of
\begin{align}
F_n\omega^n &= \frac{1}{\sqrt{5}} + \mathcal{O}(\omega^{2n})  \text{ for } n\geq 0; \label{eq:Fomega} \\
\frac{F_{n-1}}{F_n} &= \omega + \mathcal{O}(\omega^{2n}) \text{ for } n>0, \label{eq:Fquot}
\end{align}
where $\omega = (\sqrt{5}-1)/2$ is the fractional part of the golden mean. These two identities play a crucial role in the proof of Theorem \ref{thm:mvgolden} by Mestel and Verschueren. Likewise, the analogous identities for the sequence $(q_n)_{n\geq 0}$, formulated in Lemmas \ref{QnEstimates} and \ref{lem:qnquot} below, will be important for the proof of Theorem \ref{MainThm}.


\subsection{A connection to Lehmer sequences \label{sec:lehmer}}
We begin by establishing a closed form for the sequence $(q_n)_{n\geq 0}$ of best approximation denominators of $\alpha$. It turns out that this closed form can be expressed in terms of a \emph{Lehmer sequence}. Lehmer sequences were first introduced in \cite{Le30}, and are defined as follows. 

\begin{defi}\label{def:lehmer}
Let $R,Q\in \ZZ$ with $R>0$ and $R-4Q>0$. We define the Lehmer sequence $(L_n(R,Q))_{n\geq 0}$ with parameters $R$ and $Q$ by 
\begin{equation*}
\begin{aligned}
	L_{2n}(R,Q)&:=L_{2n-1}(R,Q)-QL_{2n-2}(R,Q)\\ 
	L_{2n+1}(R,Q)&:=RL_{2n}(R,Q)-QL_{2n-1}(R,Q)\\
	L_0(R,Q)&=0\\
	L_1(R,Q)&=1.
\end{aligned} 
\end{equation*}
\end{defi}
\noindent The closed form of the recurrence in Definition \ref{def:lehmer} is 
\begin{align}\label{closedFormLehmer}
	L_{n}(R,Q)=
	\begin{cases} 
		\dfrac{u^{n}-v^n}{u-v} & n \text{ odd,} \\[10pt]
		\dfrac{u^{n}-v^n}{u^2-v^2} & n \text{ even,} 
	\end{cases}
\end{align}
where $u$ and $v$ are the unique solutions of the equation $x^2 -\sqrt{R}x +Q=0$. 

We will consider only the Lehmer sequence with parameters $R=c(\alpha)^2$ and $Q=(-1)^{\ell-1}$. Accordingly, we write 
\begin{equation*}
L_n:= L_n\left(c(\alpha)^2, (-1)^{\ell-1}\right)
\end{equation*}
from now on. If we introduce the constants
\begin{align}
	& \label{eq:a} a= a(\alpha):=\frac{c(\alpha) + \sqrt{c(\alpha)^2 + 4(-1)^{\ell-1}}}{2};\\
	& \label{eq:b} b=b(\alpha):=\frac{c(\alpha) - \sqrt{c(\alpha)^2 + 4(-1)^{\ell-1}}}{2},
\end{align}
for the two distinct solutions of $x^2-c(\alpha)x+(-1)^{\ell}=0$, then it follows from \eqref{closedFormLehmer} that
\begin{align}\label{closedFormLn}
	L_{n}=
	\begin{cases} 
		\dfrac{a^n-b^n}{a-b} & n \text{ odd,} \\[10pt]
		\dfrac{a^n-b^n}{a^2-b^2} & n \text{ even.} 
	\end{cases}
\end{align}
By straightforward calculations one can verify that 
\begin{equation*}
a b = (-1)^{\ell} \quad \text{ and } \quad a+b = c(\alpha).
\end{equation*}
Moreover, we have $a>1$, and $b \in (-1,0)$ if $\ell$ is odd and $b \in (0,1)$ if $\ell$ is even. 
Finally, as a consequence of Lemma \ref{BasicPropC}, we have
\begin{align*}
a(\alpha) &= a(\alpha_{\tau_u}) = a(\alpha_{\sigma_u}); \\
b(\alpha) &= b(\alpha_{\tau_u}) = b(\alpha_{\sigma_u}),
\end{align*}
for every $u \in \{ 0,1, \ldots , \ell-1\}$, where we recall the definition of $\alpha_{\tau_u}$ and $\alpha_{\sigma_u}$ from \eqref{eq:deltatau} and \eqref{eq:deltasigma}.

Let us now formulate a closed form of the sequence of best approximation denominators $(q_n)_{n\geq 0}$ for $\alpha$. In fact, similar closed forms can be established for both $(q_n)_{n\geq 0}$ and $(p_n)_{n\geq 0}$. 
\begin{lem}\label{qnClosedForm}
For every $n=\ell m + k \geq 2\ell$, where $m\in \NN$ and $k \in \{ 0, 1, \ldots , \ell-1\}$, the approximation denominator $q_n$ for $\alpha=[0; \overline{a_1, \ldots , a_{\ell}}]$ is given by
	$$q_n = q_{\ell m +k} = \frac{1}{a-b} \left( (a^m-b^m) q_{\ell+k} + (-1)^{\ell-1} (a^{m-1}-b^{m-1})q_k \right) ,$$
where $a$ and $b$ are defined in \eqref{eq:a} and \eqref{eq:b}.
\end{lem}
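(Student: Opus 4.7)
The proof proposal is to view Lemma \ref{qnRecursion} as a linear recurrence in the index $m$ (with $k$ held fixed) and solve it explicitly.

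Fix $k \in \{0, 1, \ldots, \ell-1\}$ and set $x_m := q_{\ell m + k}$ for $m \geq 0$. By Lemma \ref{qnRecursion}, for every $m \geq 2$ we have
\begin{equation*}
x_m = c(\alpha) x_{m-1} + (-1)^{\ell - 1} x_{m-2}.
\end{equation*}
This is a second-order linear recurrence with characteristic polynomial $x^2 - c(\alpha) x - (-1)^{\ell-1} = x^2 - c(\alpha) x + (-1)^\ell$, whose roots are precisely the constants $a$ and $b$ defined in \eqref{eq:a}--\eqref{eq:b}. Since $a \neq b$, the general solution has the form $x_m = A a^m + B b^m$ for suitable constants $A, B \in \RR$ (depending on $k$).

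The next step is to pin down $A$ and $B$ from the initial conditions $x_0 = q_k$ and $x_1 = q_{\ell + k}$. Solving the resulting $2 \times 2$ linear system gives
\begin{equation*}
A = \frac{q_{\ell + k} - b\, q_k}{a - b}, \qquad B = \frac{a\, q_k - q_{\ell + k}}{a - b},
\end{equation*}
so that
\begin{equation*}
x_m = \frac{(a^m - b^m)\, q_{\ell + k} - (b a^m - a b^m)\, q_k}{a - b}.
\end{equation*}
To bring this into the form stated in the lemma, I would rewrite $b a^m - a b^m = ab(a^{m-1} - b^{m-1})$ and use $ab = (-1)^\ell$, which immediately yields
\begin{equation*}
q_{\ell m + k} = \frac{1}{a-b} \bigl( (a^m - b^m) q_{\ell + k} + (-1)^{\ell - 1} (a^{m-1} - b^{m-1}) q_k \bigr),
\end{equation*}
as desired.

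There is really no serious obstacle here: once Lemma \ref{qnRecursion} is in place, the result is a routine application of the theory of linear recurrences, with the only subtle point being the identification of the roots of the characteristic polynomial with the numbers $a, b$ from \eqref{eq:a}--\eqref{eq:b} (using $a + b = c(\alpha)$ and $ab = (-1)^\ell$). The range $n \geq 2\ell$, i.e.\ $m \geq 2$, is exactly what is needed so that the recursion is valid from the step $x_2$ onwards and the initial data $x_0, x_1$ correspond to the genuine denominators $q_k, q_{\ell + k}$.
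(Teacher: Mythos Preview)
Your proof is correct and in fact more streamlined than the paper's. The paper treats the recursion from Lemma~\ref{qnRecursion} as a recurrence in the full index $n$, leading to the degree-$2\ell$ characteristic polynomial $x^{2\ell}-c(\alpha)x^{\ell}+(-1)^{\ell}=0$; it then extracts the $2\ell$ complex roots $a^{1/\ell}e_{\ell}^v$, $b^{1/\ell}e_{\ell}^v$ (with $e_{\ell}=e^{2\pi i/\ell}$), writes $q_n=\sum_{v=1}^{2\ell}c_v x_v^n$, and observes that the resulting $2\ell\times 2\ell$ linear system for the constants decouples into $\ell$ separate $2\times 2$ systems indexed by $k$, each of which is then solved to yield the stated formula. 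Your approach performs this decoupling from the outset by fixing $k$ and viewing $m\mapsto q_{\ell m+k}$ as a second-order scalar recurrence with characteristic roots $a,b$ directly. Both arguments rest on the same ingredients (Lemma~\ref{qnRecursion}, $a+b=c(\alpha)$, $ab=(-1)^{\ell}$), but yours avoids the detour through $\ell$th roots and complex arithmetic, at the modest cost of not exhibiting the full $n$-indexed closed form $q_n=\sum c_v x_v^n$ (which the paper does not use elsewhere anyway).
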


\begin{lem}\label{pnClosedForm}
For every $n=\ell m + k \geq 2\ell$, where $m\in \NN$ and $k \in \{ 0, 1, \ldots , \ell-1\}$, the approximation numerator $p_n$ for $\alpha=[0; \overline{a_1, \ldots , a_{\ell}}]$ is given by
	$$p_n = p_{\ell m +k} = \frac{1}{a-b} \left( (a^m-b^m) p_{\ell+k} + (-1)^{\ell-1} (a^{m-1}-b^{m-1})p_k \right) ,$$
where $a$ and $b$ are defined in \eqref{eq:a} and \eqref{eq:b}. \end{lem}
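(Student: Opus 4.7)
The plan is to prove Lemma \ref{pnClosedForm} in complete parallel with Lemma \ref{qnClosedForm}: first I would establish that the numerator sequence obeys the same $\ell$-shifted linear recurrence as the denominator sequence, and then solve it with the appropriate initial conditions. The key preliminary result I aim for is the exact analogue of Lemma \ref{qnRecursion}, namely that
\begin{equation*}
p_n = c(\alpha)\, p_{n-\ell} + (-1)^{\ell-1} p_{n-2\ell} \qquad (n \geq 2\ell).
\end{equation*}
Once this is in hand, the closed form of Lemma \ref{pnClosedForm} follows mechanically.

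To establish the recurrence I would proceed by induction, mirroring the proof of Lemma \ref{qnRecursion}. The inductive step for $n > 2\ell+1$ is essentially unchanged: expand $p_{n+1} = a_{n \bmod \ell}\, p_n + p_{n-1}$, apply the induction hypothesis to $p_n$ and $p_{n-1}$, and collect terms using $n \bmod \ell = (n-\ell) \bmod \ell = (n-2\ell) \bmod \ell$. For the base case $n=2\ell+1$, Lemma \ref{BasicPropQP}\ref{BasicPropQPSecond} with $r=\ell+1$ gives $p_{2\ell+1} = p_\ell p_{\ell+1} + p_{\ell+1} q_{\ell+1} = c(\alpha)\, p_{\ell+1}$, which matches the right-hand side of the claimed recurrence since $p_1 = 0$. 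The base case $n=2\ell$ is slightly more delicate: Lemma \ref{BasicPropQP}\ref{BasicPropQPSecond} with $r=\ell$ yields $p_{2\ell} = p_\ell^2 + p_{\ell+1} q_\ell$, and to rewrite this as $c(\alpha) p_\ell + (-1)^{\ell-1} p_0 = (q_{\ell+1}+p_\ell) p_\ell + (-1)^{\ell-1}$ one must invoke the classical cross-identity \eqref{eq:pq} with $n=\ell$, that is, $p_{\ell+1} q_\ell - p_\ell q_{\ell+1} = (-1)^{\ell-1}$.

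Having secured the recurrence, for each fixed $k \in \{0,1,\ldots,\ell-1\}$ the subsequence $Y_m := p_{\ell m + k}$ satisfies $Y_m - c(\alpha)\, Y_{m-1} + (-1)^{\ell} Y_{m-2} = 0$, whose characteristic polynomial $x^2 - c(\alpha) x + (-1)^{\ell} = 0$ has precisely the distinct roots $a$ and $b$ of \eqref{eq:a}--\eqref{eq:b}. Writing $Y_m = C_1 a^m + C_2 b^m$ and determining $C_1, C_2$ from the initial values $Y_0 = p_k$ and $Y_1 = p_{\ell+k}$, a short simplification using $ab = (-1)^{\ell}$ delivers the claimed formula. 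I do not anticipate any substantive obstacle: the whole argument is a transcription of the proof of Lemma \ref{qnClosedForm}, and the only non-mechanical point is the appeal to \eqref{eq:pq} in the base case $n=2\ell$, which is needed to convert the lone $q$-terms produced by Lemma \ref{BasicPropQP}\ref{BasicPropQPSecond} into the $p$-expression matching $c(\alpha)p_{n-\ell}+(-1)^{\ell-1}p_{n-2\ell}$.
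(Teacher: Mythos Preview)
Your proposal is correct and follows essentially the same approach as the paper, which simply declares the proof ``nearly identical'' to that of Lemma~\ref{qnClosedForm} and omits it. Your explicit establishment of the $p_n$-recursion (the $p$-analogue of Lemma~\ref{qnRecursion}) with base cases handled via Lemma~\ref{BasicPropQP}\ref{BasicPropQPSecond} and \eqref{eq:pq} is exactly what that omission presupposes; your final step---fixing $k$ and solving the second-order recurrence for $Y_m=p_{\ell m+k}$ directly---is a mild streamlining of the paper's route through the full $2\ell$-degree characteristic equation, but the two are equivalent in substance.
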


It is a simple observation that Lemmas \ref{qnClosedForm} and \ref{pnClosedForm} may alternatively be formulated in terms of the Lehmer sequence \eqref{closedFormLn}. As we find this to be of independent interest, we formulate it as a theorem. 
\begin{thm}
\label{thm:lehmer}
For every $n=\ell m + k \geq 2\ell$, where $m\in \NN$ and $k \in \{ 0, 1, \ldots , \ell-1\}$, the convergents $p_n/q_n$ of $\alpha=[0; \overline{a_1, \ldots , a_{\ell}}]$ are given by
$$q_n = q_{\ell m +k}=\gamma_1^{(m)}L_m q_{\ell+k} + (-1)^{\ell-1}\gamma_2^{(m)} L_{m-1} q_k,$$ 
and 
$$p_n = p_{\ell m +k}=\gamma_1^{(m)}L_m p_{\ell+k} + (-1)^{\ell-1}\gamma_2^{(m)} L_{m-1} p_k,$$ 
where $a$ and $b$ are defined in \eqref{eq:a} and \eqref{eq:b}, $L_m$ is the Lehmer sequence \eqref{closedFormLn}, and 
\begin{equation*}
	\gamma_1^{(m)}:=\begin{cases} c(\alpha)& m \text{ even}\\ 1 & m \text { odd} \end{cases}  \quad \text{ and } \quad \gamma_2^{(m)}=\begin{cases} 1& m \text{ even}\\ c(\alpha) & m \text { odd} \end{cases}.
\end{equation*}
\end{thm}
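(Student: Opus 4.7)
The proof is essentially a direct rewriting of Lemmas \ref{qnClosedForm} and \ref{pnClosedForm} in terms of the Lehmer sequence \eqref{closedFormLn}. The plan is to show that the quantities $(a^m-b^m)/(a-b)$ and $(a^{m-1}-b^{m-1})/(a-b)$ appearing in those lemmas equal $\gamma_1^{(m)}L_m$ and $\gamma_2^{(m)}L_{m-1}$, respectively; once this is done, the theorem follows by substitution.

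The key observation is that $a$ and $b$ defined in \eqref{eq:a}--\eqref{eq:b} satisfy $a+b = c(\alpha)$, so
\[
a^2-b^2 = (a-b)(a+b) = (a-b)\,c(\alpha).
\]
Using this with the closed form \eqref{closedFormLn}, we split into cases according to the parity of $m$. If $m$ is odd, then $L_m = (a^m-b^m)/(a-b)$ directly, so $(a^m-b^m)/(a-b) = L_m = \gamma_1^{(m)}L_m$. If $m$ is even, then $L_m = (a^m-b^m)/(a^2-b^2)$, and multiplying both sides by $a+b=c(\alpha)$ gives $(a^m-b^m)/(a-b) = c(\alpha)L_m = \gamma_1^{(m)}L_m$. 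Thus in either case
\[
\frac{a^m-b^m}{a-b} = \gamma_1^{(m)} L_m.
\]
Applying exactly the same argument with $m$ replaced by $m-1$ (so the parity of $m-1$ is opposite that of $m$) yields
\[
\frac{a^{m-1}-b^{m-1}}{a-b} = \gamma_2^{(m)} L_{m-1},
\]
since the factor $c(\alpha)$ now appears precisely when $m-1$ is even, i.e.\ when $m$ is odd, matching the definition of $\gamma_2^{(m)}$.

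Substituting these two identities into the closed-form expression given by Lemma \ref{qnClosedForm} yields
\[
q_{\ell m +k} = \gamma_1^{(m)} L_m\, q_{\ell+k} + (-1)^{\ell-1}\gamma_2^{(m)} L_{m-1}\, q_k,
\]
and the analogous substitution into Lemma \ref{pnClosedForm} yields the corresponding formula for $p_{\ell m + k}$. This completes the proof.

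There is really no substantive obstacle here, since the theorem is a notational repackaging rather than a new result; the only point that deserves care is tracking the parity of $m$ (and separately of $m-1$) in the definition of the Lehmer sequence, so that the factors $c(\alpha)$ appearing in $\gamma_1^{(m)}$ and $\gamma_2^{(m)}$ land in the right places. Accordingly, the proof in the paper is expected to be just a few lines.
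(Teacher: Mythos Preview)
Your proposal is correct and matches the paper's approach exactly: the paper does not give a separate proof of Theorem~\ref{thm:lehmer} but simply states that it is ``a simple observation that Lemmas~\ref{qnClosedForm} and~\ref{pnClosedForm} may alternatively be formulated in terms of the Lehmer sequence~\eqref{closedFormLn}.'' Your argument spells out precisely this observation, using $a+b=c(\alpha)$ to convert $(a^m-b^m)/(a-b)$ into $\gamma_1^{(m)}L_m$ (and similarly for $m-1$), which is all that is needed.
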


As the proofs of Lemmas \ref{qnClosedForm} and \ref{pnClosedForm} are nearly identical, we include only the former.
\begin{proof}[Proof of Lemma \ref{qnClosedForm}]
By Lemma \ref{qnRecursion}, we have the recursion formula
$$q_n = c(\alpha)q_{n-\ell}+(-1)^{\ell-1}q_{n-2\ell} ,$$
whenever $n\geq 2\ell$. The corresponding polynomial characteristic equation is 
\begin{equation}\label{charctEqu}
	x^{2\ell}-c(\alpha)x^{\ell} + (-1)^{\ell}=0. 
\end{equation}
Substituting $y=x^{\ell}$, we get the equation $y^2 -c(\alpha)y+(-1)^{\ell}=0$, whose two solutions $a$ and $b$ are given in \eqref{eq:a} and \eqref{eq:b}, respectively. Now let $e_{\ell} = e^{2\pi i / \ell}$. The $2\ell$ unique solutions of \eqref{charctEqu} are 
\begin{equation}
x_v=a^{1/\ell} e_{\ell}^v \quad \text{ and } \quad x_{v+\ell} = b^{1/\ell} e_{\ell}^v 
\end{equation}
for $v=1,2,\ldots , \ell$. Accordingly, for an arbitrary $n \geq 2\ell$, the closed form of $q_n$ is 
\begin{equation}\label{eq:closedqn}
q_n= \sum_{v=1}^{2\ell} c_v x_v^n , 
\end{equation} 
where the constants $c_1, \ldots , c_{2\ell}$ are determined by the $2\ell$ first terms $q_0, \ldots , q_{2\ell -1}$. For a given $j\in \{ 1,2, \ldots , \ell \}$, inserting $q_{j-1}$ in \eqref{eq:closedqn} yields
\begin{equation*}
q_{j-1}=\sum_{v=1}^{2\ell} c_v x_v^{j-1} = a^{(j-1)/\ell} C_j^{(1)} + b^{(j-1)/\ell} C_j^{(2)} , 
\end{equation*}
where
\begin{equation*}
C_j^{(1)}=\sum_{v=1}^{\ell} c_v e_{\ell}^{v (j-1)} \quad \text{ and } \quad C_j^{(2)} = \sum_{v=\ell+1}^{2\ell} c_v e_{\ell}^{v (j-1)}.
\end{equation*}
Similarly, we have 
\begin{equation*}
q_{\ell+j-1}=  a^{1+(j-1)/\ell} C_j^{(1)} + b^{1+(j-1)/\ell} C_j^{(2)}.
\end{equation*}
Thus, the system of $2\ell$ equations determining the constants $c_{1}, \ldots , c_{2\ell}$ decouples into $\ell$ systems of $2$ equations in the variables $(C_j^{(1)}, C_j^{(2)})$, with $j=1,2,\ldots , \ell$. Solving these $\ell$ systems, we get
\begin{align*}
	C^{(1)}_j=\dfrac{1}{a^{(j-1)/\ell}}\dfrac{bq_{j-1}-q_{\ell+j-1}}{b-a} ; \\
	C^{(2)}_j=\dfrac{1}{b^{(j-1)/\ell}}\dfrac{q_{\ell+j-1}-aq_{j-1}}{b-a}.
\end{align*} 
Finally, for $n=\ell m + k \geq 2 \ell$, we thus have
\begin{align*}
	q_n &= q_{\ell m+k}  = \sum_{v=1}^{2\ell} c_v x_v^{\ell m +k} =a^{m+k/\ell}C_{k+1}^{(1)}+b^{m+k/\ell}C_{k+1}^{(2)} \\
	&=\frac{1}{a-b} \left( (a^m-b^m) q_{\ell+k} + (-1)^{\ell-1} (a^{m-1}-b^{m-1})q_k \right), 
\end{align*}
where in the final equality we have used that $ab=(-1)^{\ell}$. This completes the proof of Lemma \ref{qnClosedForm}.
\end{proof}


\subsection{Asymptotic behaviour of $q_n$ and $q_{n-1}/q_n$ \label{sec:consequences}}
In this section, we show that the closed form of $(q_n)_{n\geq 0}$ that was established in Lemma~\ref{qnClosedForm} can be used to formulate analogs of the Fibonacci identities \eqref{eq:Fomega} and \eqref{eq:Fquot} for the more general case of irrationals with a periodic continued fraction expansion. We begin with the simpler task of formulating an analog of \eqref{eq:Fomega}. Informally speaking, we will see that the constant $b$ in \eqref{eq:b} plays the role of the fractional part of the golden mean $\omega$.


\begin{lem}\label{QnEstimates} 
Let $\ell \in \NN$ and $k \in \{0,1, \ldots,\ell-1\}$ be fixed integers, and let $(p_n/q_n)_{n\geq 1}$ be the convergents for $\alpha=[0;\overline{a_1, \ldots , a_{\ell}}]$. For all integers $m\geq 2$, we have
\begin{equation}
\label{QnEstimatesFirst}
q_{\ell m+k}|b|^m = c_k + \mathcal{O}(|b|^{2m}),
\end{equation}
where 
\begin{equation}
\label{eq:ck}
c_k:=\frac{q_{\ell+k}-bq_k}{a-b},
\end{equation}
and $a$ and $b$ are given in \eqref{eq:a} and \eqref{eq:b}. Thus, we have $q_{\ell m +k}=\mathcal{O}(|b|^{-m})$.
\end{lem}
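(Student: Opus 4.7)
The proof is a direct algebraic manipulation of the closed form established in Lemma \ref{qnClosedForm}, so I would simply plug in and simplify. Before doing so, the plan is to record two key facts about the constants $a$ and $b$ defined in \eqref{eq:a}, \eqref{eq:b}, which the paper has already noted implicitly.

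First, since $ab = (-1)^{\ell}$, we have $|ab| = 1$, and because $a > 1 > 0$ this gives $a = 1/|b|$; equivalently, $a|b| = 1$. Second, since $b \in (-1,0)$ when $\ell$ is odd and $b \in (0,1)$ when $\ell$ is even, the sign of $b$ is $(-1)^{\ell}$, so $b = (-1)^{\ell}|b|$, and therefore $-b = (-1)^{\ell-1}|b|$. These two identities are the only structural inputs needed.

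With these in hand, I would multiply the closed form
\[
q_{\ell m+k} \;=\; \frac{(a^m - b^m)\,q_{\ell+k} \;+\; (-1)^{\ell-1}(a^{m-1}-b^{m-1})\,q_k}{a-b}
\]
through by $|b|^m$ and inspect the four resulting terms. The two $a$-power terms are the main contributions: $a^m |b|^m = 1$ collapses the first term to $q_{\ell+k}/(a-b)$, while $a^{m-1}|b|^m = |b|$, combined with the factor $(-1)^{\ell-1}$, yields $(-1)^{\ell-1}|b|\,q_k/(a-b) = -b\,q_k/(a-b)$. Adding these produces exactly the claimed main term $c_k = (q_{\ell+k} - b q_k)/(a-b)$.

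For the error, the $b^m|b|^m$ term equals $(\mathrm{sign}(b))^{m}|b|^{2m}$, which is trivially $\mathcal{O}(|b|^{2m})$, and the $b^{m-1}|b|^m$ term equals $(\mathrm{sign}(b))^{m-1}|b|^{2m-1}$; since $|b|^{-1}$ is a constant depending only on $\alpha$, this is also $\mathcal{O}(|b|^{2m})$. Absorbing the constant factor $1/(a-b)$ into the $\mathcal{O}$-constant gives \eqref{QnEstimatesFirst}. The final assertion $q_{\ell m+k} = \mathcal{O}(|b|^{-m})$ is immediate: the left-hand side of \eqref{QnEstimatesFirst} is bounded as $m \to \infty$, so dividing by $|b|^m$ yields the bound. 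There is no real obstacle here beyond careful bookkeeping of signs arising from the parity of $\ell$; the heart of the argument is simply the reciprocity $a|b|=1$, which replaces the role played by $F_n\omega^n \to 1/\sqrt{5}$ in the Fibonacci case.
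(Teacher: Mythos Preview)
Your proposal is correct and follows essentially the same approach as the paper: both multiply the closed form from Lemma~\ref{qnClosedForm} by $|b|^m$, use $ab=(-1)^\ell$ (equivalently $a|b|=1$), and read off the main term $c_k$ and the $\mathcal{O}(|b|^{2m})$ error. The paper's write-up is slightly more compact, combining the two error contributions into the single expression $(-1)^{\ell m}b^{2m}(aq_k-q_{\ell+k})/(a-b)$, but the substance is identical.
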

\begin{rem*}
Note that in the special case when $\alpha=\omega$ is the fractional part of the golden mean and $q_n=F_n$ is the Fibonacci sequence, we have $b=\omega$, and $c_k=c_0=1/\sqrt{5}$. Accordingly, Lemma \ref{QnEstimates} reduces to the Fibonacci identity \eqref{eq:Fomega} in this case.
\end{rem*}

\begin{proof}[Proof of Lemma \ref{QnEstimates}]
	Recall again the closed form 
	\begin{equation*}
		q_{\ell m+k} = \frac{1}{a-b}\left( (a^m-b^m)q_{\ell+k} +(-1)^{\ell-1}(a^{m-1}-b^{m-1})q_k \right) 
	\end{equation*}	
	from Lemma \ref{qnClosedForm}. Multiplying both sides by $|b|^m$ and using that $ab=(-1)^{\ell}$, we get
	\begin{equation*}
		q_{\ell m +k}|b|^m = \frac{1}{a-b}\left( q_{\ell+k} -bq_k + (-1)^{\ell m}b^{2m}(aq_k-q_{\ell+k})\right) = c_k + \mathcal{O}(|b|^{2m}),
	\end{equation*}
	with $c_k>0$ as in \eqref{eq:ck}.
\end{proof}


We now aim to establish an analog, or extension, of the Fibonacci identity \eqref{eq:Fquot}. This identity, which plays a crucial role in the work of Mestel and Verschueren \cite{MV15}, is a consequence of the fact that $F_{n-1}/F_n$ is the $n$th convergent of the golden mean $\omega$. Naturally, we cannot expect the same identity to hold for the general case of irrationals with a periodic continued fraction expansion. However, we will see in Lemma \ref{lem:qnquot} below that a similar identity can indeed be formulated. Needed for this lemma is the following estimation error of the $n$th convergent $p_n/q_n$ of $\alpha=[0;\overline{a_1, \ldots , a_{\ell}}]$ in terms of the constant $b$ in \eqref{eq:b}.

\begin{lem}\label{BasicPropQn}
	Let $n=\ell m +k\geq 2\ell$, where $m\in \NN$ and $k \in \{0,1, \ldots,\ell-1\}.$ Moreover, let $(p_n/q_n)_{n\geq 1}$ be the sequence of convergents for $\alpha=[0;\overline{a_1, \ldots , a_{\ell}}]$. We have that 
	\begin{equation}
\label{BasicPropQnFifth} 
q_n \alpha=q_{\ell m +k} \alpha = p_n + e_k b^m ,
\end{equation}
where
\begin{equation}
\label{eq:conste}
	e_{k} = \frac{(-1)^{k-1}}{q_{\ell}}\left| aq_{k}-q_{\ell+k} \right|, 
\end{equation} 
and $a$ and $b$ are given in \eqref{eq:a} and \eqref{eq:b}.
\end{lem}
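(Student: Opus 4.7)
The plan is to substitute the closed forms of Lemmas \ref{qnClosedForm} and \ref{pnClosedForm} into $q_n\alpha - p_n$ for $n=\ell m+k$, and show that the $a^m$ contribution cancels. Setting $\varepsilon_j := \alpha q_j - p_j$ and using $ab=(-1)^\ell$ (so that $(-1)^{\ell-1}/a = -b$ and $(-1)^{\ell-1}/b = -a$), the two closed forms combine, after reorganizing, into
\begin{equation*}
q_n\alpha - p_n \;=\; \frac{a^m(\varepsilon_{\ell+k} - b\,\varepsilon_k)\;-\;b^m(\varepsilon_{\ell+k} - a\,\varepsilon_k)}{a-b}.
\end{equation*}
The central claim is therefore $\varepsilon_{\ell+k} = b\,\varepsilon_k$; granting this, the $a^m$ term vanishes and the $b^m$ coefficient collapses to $\varepsilon_k$, yielding $q_n\alpha - p_n = b^m \varepsilon_k$.

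To establish $\varepsilon_{\ell+k} = b\,\varepsilon_k$, I would apply Lemma \ref{BasicPropQP}\ref{BasicPropQPSecond} with $r=k$ to decompose $q_{\ell+k} = q_{\ell+1}q_k + q_\ell p_k$ and $p_{\ell+k} = p_\ell p_k + p_{\ell+1}q_k$, which instantly rearranges to the linear relation $\varepsilon_{\ell+k} = \varepsilon_{\ell+1}\,q_k + \varepsilon_\ell\,p_k$. Hence it suffices to prove the two base identities $\varepsilon_\ell = -b$ and $\varepsilon_{\ell+1} = b\,\alpha$. Both follow from the intermediate key identity $a = q_{\ell+1} + q_\ell\,\alpha$: since $\alpha$ has purely $\ell$-periodic expansion, the complete quotient after $\ell$ steps is $1/\alpha$, so $\alpha = (p_{\ell+1} + p_\ell\alpha)/(q_{\ell+1} + q_\ell\alpha)$, which is equivalent to the quadratic $q_\ell\alpha^2 + (q_{\ell+1}-p_\ell)\alpha - p_{\ell+1} = 0$. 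Substituting $x = q_{\ell+1} + q_\ell\alpha$ into $x^2 - c(\alpha)x + (-1)^\ell$ and simplifying using this quadratic together with $p_\ell q_{\ell+1} - p_{\ell+1} q_\ell = (-1)^\ell$ from \eqref{eq:pq} gives $0$; positivity of $\alpha,q_\ell,q_{\ell+1}$ identifies $q_{\ell+1}+q_\ell\alpha$ as the larger root, namely $a$. One then reads off $\varepsilon_\ell = (a-q_{\ell+1}) - p_\ell = a - c(\alpha) = -b$, and a short calculation using $q_\ell\alpha^2 = p_{\ell+1} - (q_{\ell+1}-p_\ell)\alpha$ gives $\varepsilon_{\ell+1} = \alpha(p_\ell - q_\ell\alpha) = \alpha\,b$, as needed.

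Finally, to match $\varepsilon_k$ with the constant $e_k$ in \eqref{eq:conste}, I would use Lemma \ref{BasicPropQP}\ref{BasicPropQPSecond} once more together with $a = q_{\ell+1} + q_\ell\alpha$ to compute $a q_k - q_{\ell+k} = q_\ell(\alpha q_k - p_k) = q_\ell\,\varepsilon_k$. Since convergents alternate above and below $\alpha$ in this indexing (as noted in Section \ref{cfexpansions}), one has $\varepsilon_k>0$ for odd $k$ and $\varepsilon_k<0$ for even $k$, so $\varepsilon_k = (-1)^{k-1}|\varepsilon_k| = (-1)^{k-1}|aq_k - q_{\ell+k}|/q_\ell = e_k$. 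The main technical obstacle is verifying the identity $a = q_{\ell+1} + q_\ell\alpha$ cleanly; once that is in place, the rest of the argument is careful bookkeeping with identities already established in Section~\ref{cfexpansions} and Lemmas \ref{qnClosedForm}--\ref{pnClosedForm}.
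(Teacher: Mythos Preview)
Your argument is correct and rests on the same ingredients as the paper's proof: the closed forms of Lemmas \ref{qnClosedForm}--\ref{pnClosedForm}, the quadratic $q_\ell x^2+(q_{\ell+1}-p_\ell)x-p_{\ell+1}=0$ for $\alpha$, Lemma \ref{BasicPropQP}\ref{BasicPropQPSecond}, and the alternating sign of $q_n\alpha-p_n$. The organization differs, however. The paper first proves $q_\ell\alpha=p_\ell-b$, then substitutes $\alpha=(p_\ell-b)/q_\ell$ directly into the closed form for $q_n$ and simplifies the resulting expression using both parts \ref{BasicPropQPFirst} and \ref{BasicPropQPSecond} of Lemma \ref{BasicPropQP}, arriving at $q_n\alpha-p_n=\tfrac{b^m}{q_\ell}(aq_k-q_{\ell+k})$. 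You instead isolate the recursion $\varepsilon_{\ell+k}=b\,\varepsilon_k$ as the conceptual core, reduce it via Lemma \ref{BasicPropQP}\ref{BasicPropQPSecond} to the pair $\varepsilon_\ell=-b$, $\varepsilon_{\ell+1}=b\alpha$, and derive both from the equivalent identity $a=q_{\ell+1}+q_\ell\alpha$. Your route is arguably cleaner: the relation $\varepsilon_{\ell+k}=b\,\varepsilon_k$ makes transparent \emph{why} the $a^m$ term cancels, and you avoid invoking Lemma \ref{BasicPropQP}\ref{BasicPropQPFirst}. The paper's route, on the other hand, reaches the explicit form $\tfrac{b^m}{q_\ell}(aq_k-q_{\ell+k})$ slightly earlier, before the sign discussion. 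Both are short and the difference is mainly one of packaging.
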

\begin{rem*}
Note, in particular, that since $q_n= q_{\ell m+k} = \mathcal{O}(|b|^{-m})$ by Lemma~\ref{QnEstimates}, it follows from Lemma \ref{BasicPropQn} that 
\begin{equation}
\label{eq:estimate}
\alpha = \frac{p_n}{q_n}+ \mathcal{O}(|b|^{2m}). 
\end{equation}
\end{rem*}

\begin{proof}[Proof of Lemma \ref{BasicPropQn}]
As a preliminary step (note that this is not part of the statement), we show that \eqref{BasicPropQnFifth} holds when $m=1$ and $k=0$, that is 
\begin{equation}
\label{eq:firstcase}
q_{\ell}\alpha = p_{\ell}-b .
\end{equation}
It is well known that $\alpha$ is a root of the polynomial $q_{\ell}x^2+(q_{\ell+1}-p_{\ell})x-p_{\ell+1}$ (see e.g. \cite[p.~69]{Perron}), and since $\alpha>0$ we must have
\begin{align*}
\alpha&=\frac{-q_{\ell+1}+p_{\ell}+ \sqrt{(q_{\ell+1}-p_{\ell})^2+4p_{\ell+1}q_{\ell}}}{2q_{\ell}}\\
&=\frac{p_{\ell}}{q_{\ell}} + \frac{-(q_{\ell+1}+p_{\ell})+ \sqrt{(q_{\ell+1}+p_{\ell})^2-4q_{\ell+1}p_{\ell}+4p_{\ell+1}q_{\ell}}}{2q_{\ell}}
\end{align*}
Using that $c(\alpha)=q_{\ell+1}+p_{\ell}$ and $p_{\ell+1}q_{\ell}-q_{\ell+1}p_{\ell}=(-1)^{\ell-1}$, we thus get
$$q_{\ell} \alpha = p_{\ell} - \frac{c(\alpha)- \sqrt{c(\alpha)^2+ 4(-1)^{\ell-1}}}{2} = p_{\ell}-b.$$

Now let us see that \eqref{BasicPropQnFifth} holds for all $n = \ell m +k \geq 2\ell$.
We recall the closed forms 
	$$p_n = p_{\ell m +k} = \frac{1}{a-b} \left( (a^m-b^m) p_{\ell+k} + (-1)^{\ell-1} (a^{m-1}-b^{m-1})p_k \right) ,$$
and 
	$$q_n = q_{\ell m +k} = \frac{1}{a-b} \left( (a^m-b^m) q_{\ell+k} + (-1)^{\ell-1} (a^{m-1}-b^{m-1})q_k \right) ,$$
from Lemmas \ref{pnClosedForm} and \ref{qnClosedForm}, respectively. 
Multiplying the latter with $\alpha$, and using \eqref{eq:firstcase}, we get
	$$q_n\alpha = \frac{(p_{\ell}-b)}{q_{\ell}(a-b)}\left( (a^m-b^m)q_{\ell+k}+(-1)^{\ell-1}(a^{m-1}-b^{m-1})q_k \right).$$
For ease of notation, let us write $A=(a^m-b^m)$ and $B=(-1)^{\ell-1}(a^{m-1}-b^{m-1})$. We then have
\begin{align*}
	q_n\alpha - p_n &= \frac{1}{q_{\ell}(a-b)} \left( (p_{\ell}-b) \left( Aq_{\ell+k}+Bq_k\right)-(Aq_{\ell}p_{\ell+k}+Bq_{\ell}p_k) \right) \\
	&= \frac{1}{q_{\ell}(a-b)} \left( A(q_{\ell+k}p_{\ell}-bq_{\ell+k}-q_{\ell}p_{\ell+k})+B(p_{\ell}q_k-bq_k-q_{\ell}p_k)\right).
\end{align*}
Now recall from Lemma \ref{BasicPropQP} that 
$$q_{\ell+k}p_{\ell}-q_{\ell}p_{\ell+k}=(-1)^{\ell}q_k$$
(part \ref{BasicPropQPFirst} with $m=\ell$ and $n=k$) and  
$$p_kq_{\ell}= q_{\ell+k}-q_{\ell+1}q_k $$
(part \ref{BasicPropQPSecond} with $m=\ell$ and $n=k$). Inserting this above, we get
\begin{align*}
q_n\alpha - p_n &= \frac{1}{q_\ell(a-b)} \left( A((-1)^{\ell}q_k-bq_{\ell+k}) + B(-q_{\ell+k}+q_k(q_{\ell+1}+p_{\ell}-b)) \right) \\
&= \frac{1}{q_{\ell}} \left( -\left( \frac{Ab+B}{a-b}\right)q_{\ell+k}+ \left( \frac{(-1)^{\ell}A+Ba}{a-b}\right)q_k\right),
\end{align*}
where we have used that $q_{\ell+1}+p_{\ell}-b= c(\alpha)-b=a$. From $ab=(-1)^{\ell}$, it follows that
\begin{equation*}
\frac{Ab+B}{a-b} = b^m \quad \text{ and } \quad \frac{(-1)^{\ell}A+Ba}{a-b} = ab^m .
\end{equation*}
Accordingly, we have 
$$q_n\alpha - p_n = \frac{b^m}{q_{\ell}}(aq_k-q_{\ell+k}).$$
Finally, we know from the general theory of continued fractions that $p_n/q_n$ is greater than $\alpha$ if $n$ is even and smaller than $\alpha$ if $n$ is odd. We thus get
\begin{align*}
q_n\alpha-p_n &= (-1)^{n-1} \left| \frac{b^m}{q_{\ell}}(aq_k-q_{\ell+k}) \right| \\
&=(-1)^{\ell m+k-1}\left((-1)^{\ell}b\right)^m\cdot \frac{1}{q_{\ell}}\left| aq_k-q_{\ell+k} \right| = e_kb^m,
\end{align*}
with $e_k$ given in \eqref{eq:conste}. This completes the proof of Lemma \ref{BasicPropQn}.
\end{proof}


With Lemma \ref{BasicPropQn} established, we may now formulate the following analog of the Fibonacci sequence \eqref{eq:Fquot} for the general case of irrationals with a periodic continued fraction expansion.
\begin{lem}
\label{lem:qnquot}
Let $\ell \in \NN$ and $k \in \{ 0,1, \ldots , \ell -1 \}$ be fixed integers, and let $(p_n/q_n)_{n\geq 1}$ be the convergents for $\alpha=[0;\overline{a_1, \ldots , a_{\ell}}]$. We have that
\begin{equation}
\label{eq:qnquot}
\frac{q_{\ell m +k-1}}{q_{\ell m +k }} = \frac{p_{\ell m +k}(\alpha_{\sigma_k})}{q_{\ell m + k}(\alpha_{\sigma_k})} = \alpha_{\sigma_k} + \mathcal{O}(|b|^{2m}),
\end{equation}
where $b$ is given in \eqref{eq:b}.
\end{lem}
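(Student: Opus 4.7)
The plan is to split the equality chain into two independent claims. The first equality $q_{\ell m+k-1}(\alpha)/q_{\ell m+k}(\alpha)=p_{\ell m+k}(\alpha_{\sigma_k})/q_{\ell m+k}(\alpha_{\sigma_k})$ will be treated as a purely combinatorial identity about continued fractions, while the second equality will follow by applying the error estimate \eqref{eq:estimate} (the remark after Lemma \ref{BasicPropQn}) to the irrational $\alpha_{\sigma_k}$ itself.

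For the first equality, the plan is to invoke the classical mirror-property of convergents. Starting from $q_0=0$, $q_1=1$ and $q_{j+1}=a_j q_j + q_{j-1}$, iterated inversion yields
$$\frac{q_{n-1}}{q_n}=[0;a_{n-1},a_{n-2},\ldots,a_1]$$
for any $n\geq 2$, a finite continued fraction with exactly $n-1$ partial quotients. By the indexing convention of this paper, the $n$th convergent of an irrational is also built from $n-1$ partial quotients, so it suffices to show that the reversed block $(a_{n-1},a_{n-2},\ldots,a_1)$ agrees with the first $n-1$ partial quotients of $\alpha_{\sigma_k}$. Writing $n=\ell m+k$ and reducing indices modulo $\ell$ (with $a_0$ read as $a_\ell$), the reversed block is the prefix of the purely periodic word with period $(a_{k-1},a_{k-2},\ldots,a_1,a_\ell,a_{\ell-1},\ldots,a_k)$, which is precisely $\sigma_k(\bsa)$ by \eqref{eq:sigma}. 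This gives the first equality, and hence shows in particular that $q_{n-1}(\alpha)/q_n(\alpha)$ is a convergent of $\alpha_{\sigma_k}$.

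For the second equality, the plan is to apply the remark after Lemma \ref{BasicPropQn} to $\alpha_{\sigma_k}$ instead of $\alpha$. Since $\alpha_{\sigma_k}$ is itself a purely periodic quadratic irrational of period $\ell$, the cited estimate gives
$$\alpha_{\sigma_k}=\frac{p_n(\alpha_{\sigma_k})}{q_n(\alpha_{\sigma_k})}+\mathcal{O}\bigl(|b(\alpha_{\sigma_k})|^{2m}\bigr).$$
Using the invariance $b(\alpha_{\sigma_k})=b(\alpha)=b$ recorded after Lemma \ref{BasicPropC}, the error term is exactly $\mathcal{O}(|b|^{2m})$, which yields the stated asymptotic.

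I expect the combinatorial verification in Step 1 to be the main (though modest) obstacle: the modular bookkeeping is straightforward, but the boundary values $k=0$ and $k=1$ must be checked directly against the special definitions of $\sigma_0$ and $\sigma_1$ in \eqref{eq:sigma}. Everything else reduces to previously established results, so no genuinely new computation is required beyond identifying the reversed partial-quotient sequence with the $\sigma_k$-rotation.
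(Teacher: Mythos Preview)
Your proposal is correct and follows essentially the same route as the paper: the paper also derives $q_{n-1}/q_n=[0;a_{n-1},\ldots,a_1]$ by iterating the recursion $q_n=a_{n-1}q_{n-1}+q_{n-2}$, identifies this finite continued fraction with $p_n(\alpha_{\sigma_k})/q_n(\alpha_{\sigma_k})$ by matching partial quotients (treating $k\geq 1$ explicitly and remarking that $k=0$ is similar), and then applies \eqref{eq:estimate} to $\alpha_{\sigma_k}$ together with $b(\alpha_{\sigma_k})=b(\alpha)$. Your packaging via the classical mirror property and the modular check $(k-j)\bmod\ell$ is a clean way to phrase the same computation.
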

\begin{rem*}
In the special case when $\alpha=\omega$ is the fractional part of the golden mean and $q_n=F_n$ is the Fibonacci sequence, we have $k=0$, $\alpha_{\sigma_0}= \alpha=\omega$ and $b=\omega$. Accordingly, Lemma \ref{lem:qnquot} reduces to the Fibonacci identity \eqref{eq:Fquot} in this case.
\end{rem*}
\begin{proof}[Proof of Lemma \ref{lem:qnquot}]
For ease of notation, we write $n= \ell m +k$. Let us first see that $q_{n-1}/q_n=p_n(\alpha_{\sigma_k})/q_n(\alpha_{\sigma_k})$. We treat only the case $k\geq 1$ (the case $k=0$ is similar). On the one hand, we have 
\begin{equation*}
\frac{p_{n}(\alpha_{\sigma_k})}{q_n(\alpha_{\sigma_k})}=[0;\underbrace{a_{k-1},\ldots,
		a_1,a_{\ell},\ldots,a_k, \ldots, a_{k-1},\ldots,a_1,a_{\ell},\ldots,a_k}_{m \text{ times}}, a_{k-1},\ldots,a_1].
\end{equation*}
On the other hand, using the recursion formula for $q_n$, we get
\begin{align*}
	\cfrac{q_{n-1}}{q_n} &= \cfrac{q_{\ell m + k-1}(\alpha)}{q_{\ell m+k}(\alpha)} = \cfrac{1}{a_{k-1} + \cfrac{q_{\ell m+k-2}}
			{q_{\ell m+k-1}}}	=\cfrac{1}{a_{k-1} + \cfrac{1}{a_{k-2}+\cfrac{q_{\ell m+k-3}}{q_{\ell m+k-2}}}} \\
		&=\ldots	= [0;a_{k-1},\ldots,a_1,\underbrace{ a_{\ell},\ldots,a_1,\ldots,a_{\ell},\ldots,a_1}_{m 
			\text{ times}}].
\end{align*}
Thus, these quotients are equal. Finally, it follows from \eqref{eq:estimate} and the fact that $b(\alpha_{\sigma_k})=b(\alpha)$ for every $k\in \{ 0,1, \ldots , \ell-1\}$ that
\begin{equation*}
\frac{p_{n}(\alpha_{\sigma_k})}{q_n(\alpha_{\sigma_k})} = \alpha_{\sigma_k} + \mathcal{O}(|b(\alpha_{\sigma_k})|^{2m}) = \alpha_{\sigma_k} + \mathcal{O}(|b|^{2m}) .
\end{equation*}
\end{proof}


We conclude this section by a more thorough investigation of the constants $c_k$ and $e_k$ in \eqref{eq:ck} and \eqref{eq:conste}. More specifically, we consider the absolute value of their product $|c_ke_k|$. This quantity will repeatedly appear in the proof of Theorem \ref{MainThm}, and the following lemma on $|c_ke_k|$ will then be useful. 
\begin{lem}
\label{lem:absckek}
For $c_k$ and $e_k$ given in \eqref{eq:ck} and \eqref{eq:conste}, we have that 
\begin{equation}
\label{eq:altckek}
|c_ke_k| = \frac{q_{\ell}(\alpha_{\tau_k})}{a-b} = \frac{q_{\ell}(\alpha_{\tau_k})}{c(\alpha_{\tau_k})-2b} \, ,
\end{equation}
with $a$ and $b$ given in \eqref{eq:a} and \eqref{eq:b}. It follows that $|c_ke_k|<1$ for each $k=0,1, \ldots , \ell-1$. 
\end{lem}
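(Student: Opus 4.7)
The plan is to substitute the formulas for $c_k$ and $e_k$ directly and then recognize the resulting symmetric expression via Lemma~\ref{BasicPropQP}. Starting from
\[
|c_k e_k| \;=\; \frac{|q_{\ell+k}-bq_k|\,|q_{\ell+k}-aq_k|}{(a-b)\,q_\ell},
\]
we first note that $q_{\ell+k}-bq_k>0$ (if $\ell$ is odd then $b<0$; if $\ell$ is even then $b\in(0,1)$ while $q_{\ell+k}>q_k$), so the numerator equals $|(q_{\ell+k}-bq_k)(q_{\ell+k}-aq_k)|$. Expanding via $a+b=c(\alpha)$ and $ab=(-1)^\ell$, this becomes $|q_{\ell+k}^{2}-c(\alpha)q_{\ell+k}q_k+(-1)^\ell q_k^{2}|$, and the core task reduces to showing the identity
\[
q_{\ell+k}^{2}-c(\alpha)q_{\ell+k}q_k+(-1)^\ell q_k^{2} \;=\; (-1)^k\, q_\ell \, q_\ell(\alpha_{\tau_k}).
\]

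For this collapse, we would write $c(\alpha)=q_{\ell+1}+p_\ell$ and apply Lemma~\ref{BasicPropQP}\ref{BasicPropQPSecond} with $r=k$ to replace $q_{\ell+k}-q_{\ell+1}q_k$ by $q_\ell p_k$. The remaining $p_\ell q_{\ell+k}$ term is rewritten using Lemma~\ref{BasicPropQP}\ref{BasicPropQPFirst} with $m=\ell$ and $n=k$, giving $p_\ell q_{\ell+k}=p_{\ell+k}q_\ell+(-1)^\ell q_k$. After this substitution, the $(-1)^\ell q_k^{2}$ terms cancel and we are left with $q_\ell(p_kq_{\ell+k}-p_{\ell+k}q_k)$; a final application of Lemma~\ref{BasicPropQP}\ref{BasicPropQPFirst}, this time with $m=k$ and $n=\ell$, turns this into $(-1)^k q_\ell\,q_\ell(\alpha_{\tau_k})$. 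Dividing by $(a-b)q_\ell$ yields $|c_k e_k|=q_\ell(\alpha_{\tau_k})/(a-b)$. The second equality in \eqref{eq:altckek} is immediate from Lemma~\ref{BasicPropC}: since $a+b=c(\alpha)=c(\alpha_{\tau_k})$, we have $a-b=c(\alpha_{\tau_k})-2b$.

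For the strict bound $|c_k e_k|<1$, it suffices to verify $q_\ell(\alpha_{\tau_k})^{2}<(a-b)^{2}=c(\alpha_{\tau_k})^{2}+4(-1)^{\ell-1}$. Writing $c=c(\alpha_{\tau_k})=q_{\ell+1}(\alpha_{\tau_k})+p_\ell(\alpha_{\tau_k})$, the case of $\ell$ odd follows at once from $c\geq q_{\ell+1}(\alpha_{\tau_k})\geq q_\ell(\alpha_{\tau_k})$ together with the $+4$ on the right. For $\ell$ even (so $\ell\geq 2$) we have $p_\ell(\alpha_{\tau_k})\geq 1$ and $q_{\ell+1}(\alpha_{\tau_k})\geq q_\ell(\alpha_{\tau_k})+1$, whence $c\geq q_\ell(\alpha_{\tau_k})+2$ and $c^{2}-q_\ell(\alpha_{\tau_k})^{2}\geq 4q_\ell(\alpha_{\tau_k})+4>4$.

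The main obstacle is the algebraic collapse in the second paragraph: once the right instances of Lemma~\ref{BasicPropQP} are spotted (in particular, invoking part~\ref{BasicPropQPFirst} twice with the roles of $m$ and $n$ exchanged), everything becomes bookkeeping, but identifying those instances is the one nontrivial step.
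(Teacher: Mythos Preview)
Your argument is correct and follows essentially the same route as the paper. The only cosmetic differences are that the paper recognizes $c(\alpha)q_{\ell+k}+(-1)^{\ell-1}q_k=q_{2\ell+k}$ via Lemma~\ref{qnRecursion} before invoking Lemma~\ref{BasicPropQP}\ref{BasicPropQPSecond} (whereas you substitute $c(\alpha)=q_{\ell+1}+p_\ell$ directly and use \ref{BasicPropQPFirst} twice), and for the bound the paper works linearly with the fraction $q_\ell(\alpha_{\tau_k})/(c(\alpha_{\tau_k})-2b)$ and splits into $\ell=1$ versus $\ell\geq 2$, while you square and split by parity of~$\ell$; both collapses and both bounds are equivalent.
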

\begin{proof}
Recalling the definition of $c_k$ and $e_k$, we have
$$|c_ke_k| = \frac{|(q_{\ell+k}-bq_k)(aq_k-q_{\ell + k})|}{q_{\ell}(a-b)} = \frac{|q_k\left( c(\alpha)q_{\ell+k}+(-1)^{\ell-1}q_k \right) -q_{\ell+k}^2|}{q_{\ell}(a-b)},$$
where we have used that $a+b=c(\alpha)$ and $ab=(-1)^{\ell}$. We now look at the numerator in this expression. Using the recursion formula in Lemma \ref{qnRecursion}, and Lemma \ref{BasicPropQP}\ref{BasicPropQPSecond} (with $r=\ell+k$), we have
$$c(\alpha)q_{\ell+k}+(-1)^{\ell-1}q_k = q_{2\ell+k} = q_{\ell+1}q_{\ell+k}+q_{\ell}p_{\ell+k} .$$
Inserting this in the numerator, we get
$$|c_ke_k| = \frac{\left| q_{\ell}q_kp_{\ell + k} + q_{\ell+k}(q_kq_{\ell+1}-q_{\ell+k})\right|}{q_{\ell}(a-b)} = \frac{\left|q_k p_{\ell+k} - q_{\ell+k}p_k \right|}{(a-b)},$$
where for the last equality we have used that $q_kq_{\ell+1}-q_{\ell+k}= -q_{\ell}p_k$ (Lemma~\ref{BasicPropQP}\ref{BasicPropQPSecond} with $r=k$). It now follows from Lemma \ref{BasicPropQP}\ref{BasicPropQPFirst} (with $m=k$ and $n=\ell$) and $c(\alpha)=c(\alpha_{\tau_k})$ that 
\begin{equation*}
|c_ke_k|= \frac{q_{\ell}(\alpha_{\tau_k})}{a-b} = \frac{q_{\ell}(\alpha_{\tau_k})}{c(\alpha)-2b} =  \frac{q_{\ell}(\alpha_{\tau_k})}{c(\alpha_{\tau_k})-2b}, 
\end{equation*}
which confirms \eqref{eq:altckek}. 

From \eqref{eq:altckek} it easily follows that $|c_ke_k|<1$. For $\ell=1$, we get 
$$|c_0e_0| = \frac{1}{\sqrt{a_1^2+4}}<1.$$ 
For $\ell\geq 2$, we have $p_{\ell}(\alpha_{\tau_k})\geq 1$, and accordingly
\begin{equation*}
|c_ke_k| = \frac{q_{\ell}(\alpha_{\tau_k})}{q_{\ell+1}(\alpha_{\tau_k})+p_{\ell}(\alpha_{\tau_k})-2b} < \frac{q_{\ell}(\alpha_{\tau_k})}{q_{\ell+1}(\alpha_{\tau_k})-1} \leq 1
\end{equation*}
for each $k=0,1, \ldots , \ell -1$. This completes the proof of Lemma \ref{lem:absckek}.
\end{proof}


\section{Decomposing $Q_{\ell m +k}(\alpha)$ \label{sec:decomp}}
The aim of this section is to decompose the product of sines $Q_{\ell m +k}(\alpha)$ in \eqref{eq:defqn} into three subproducts 
\begin{align*}
Q_{\ell m +k}(\alpha) &= A_m B_m C_m\\
&= \left| 2q_n \sin \pi e_kb^m \right| \cdot \left| \prod_{t=1}^{q_n-1} \frac{s_{mt}}{2\sin(\pi t/q_n)} \right| \cdot \prod_{t=1}^{q_n-1}\left( 1- \frac{s_{m0}^2}{s_{mt}^2}\right)^{1/2},
\end{align*}
where $n=\ell m+k$ and $s_{mt}$ is a perturbed rational sine function defined in \eqref{eq:sm} below. This decomposition is achieved by substituting the identity $\alpha = p_{n}/q_{n} + e_kb^m/q_{n}$ from Lemma \ref{BasicPropQn} into the definition of $Q_{\ell m + k}(\alpha)$, which in turn allows us to view $Q_{\ell m +k}(\alpha)$ as a perturbation of the rational sine product $\prod_{r=1}^{q_n-1} |2 \sin \pi r (p_{n}/q_{n})|$. Accordingly, proving Theorem \ref{MainThm} is a matter of showing that these perturbations have a suitable behaviour. For the latter task, it is a disadvantage that the perturbations $re_kb^m/q_{n}$ do not sum up to zero. However, by a rebasing of the argument one can attain a set of shifted perturbations $e_kb^m(r/q_n-1/2)$ which \emph{do} sum up to zero, and this approach eventually leads to the decomposition above. 

At the end of this section, we show the straightforward convergence of $A_m$ as $m\rightarrow \infty$. The convergence of $B_m$ and $C_m$ requires more work, and is treated in subsequent Sections \ref{sec:Cm} and \ref{sec:Bm}. Finally, we conclude that $Q_{\ell m+k}(\alpha)$ must be convergent for every fixed $\ell \in \NN$ and $k \in \{0,1, \ldots , \ell-1\}$ in Section~\ref{sec:proof}.


\subsection{Important families of sequences}
Before we decompose $Q_{\ell m +k}(\alpha)$ into subproducts $A_m$, $B_m$ and $C_m$ in Section \ref{subsec:decomp}, let us introduce certain families of sequences which enter into the decomposition. For integers $m\geq 1$ and $t \in \{ 0,1, \ldots , q_{\ell m +k}-1\}$, we define:
\begin{align}
	&s_{mt}:=2 \sin \pi\left(\frac{t}{q_{\ell m +k}} - |e_{k}b^m| \left(\left\{ \frac{tq_{\ell m +k-1}} {q_{\ell m +k}}\right\}-\frac{1}{2}\right)\right) \label{eq:sm}\\
	&\xi_{mt}:=\left\{t\frac{q_{\ell m +k -1}}{q_{\ell m +k}}\right\}-\frac{1}{2} \label{eq:xim}\\
	&\xi_{\infty t}:=\left \{t \alpha_{\sigma_k} \right\} - \frac{1}{2} \label{eq:xinfty}\\
	&h_{mt}:=\cot\left(\frac{\pi t}{q_{\ell m +k}}\right)\sin \pi |e_{k}b^m|\xi_{mt} \label{eq:hm}\\
	&h_{\infty t}:=\frac{|c_ke_k| \xi_{\infty t}}{t},~(t\neq 0) \label{eq:hinfty}
\end{align}
Combining \eqref{eq:sm} and \eqref{eq:xim}, we get
\begin{equation}
\label{eq:smalt}
s_{mt} = 2 \sin \pi \left( \frac{t}{q_{\ell m +k}} - \xi_{mt} |e_kb^m |\right) .
\end{equation}
It is clear from the definition of $\xi_{mt}$ that $|\xi_{mt}|\leq1/2$, and since $|b|<1$, we recognize $s_{mt}$ as the perturbation of a rational sine, where the perturbation tends to zero as $m$ increases. As we have already seen, the sequence $s_{mt}$ plays a crucial role in the decomposition of $Q_{\ell m +k}(\alpha)$.

The sequences $h_{mt}$ and $h_{\infty t}$ will not enter the story until the convergence of the subproduct $B_m$ is considered in Section \ref{sec:Bm}. Nevertheless, we introduce them at this early stage. 
\begin{lem}
\label{lem:propsm}
Let $s_{mt}$, $\xi_{mt}$, $\xi_{\infty t}$, $h_{mt}$ and $h_{\infty t}$ be the sequences given in \eqref{eq:sm} --\eqref{eq:hinfty}. We have that:
	\begin{enumerate}[label=(\alph*)]
		\item $s_{mt} = s_{m(q_{\ell m +k}-t)}$, $h_{mt} = h_{m(q_{\ell m +k}-t)}$ and $\xi_{mt}=-\xi_{m(q_{\ell m+k}-t)}$ for all $t \in \{ 0,1, \ldots , q_{\ell m +k} -1 \}$.\label{it:propsm1}
		\item $s_{mt} > s_{m0}$ for all $t \in \{ 1,2, \ldots , q_{\ell m +k} -1 \}$ if $m$ is sufficiently large.\label{it:propsm2}
		\item $\xi_{mt}-\xi_{\infty t} = \mathcal{O}(tb^{2m})$, and thus for any fixed $t \in \NN$, we have $$\lim_{m \rightarrow \infty} \xi_{mt} = \xi_{\infty t} .$$			\label{it:propsm3}
		\item $h_{mt} - h_{\infty t} = \mathcal{O}(tb^{2m})$, and thus for any fixed $t \in \NN$, we have $$\lim_{m \rightarrow \infty} h_{mt} = h_{\infty t} .$$ 				\label{it:propsm4}
	\end{enumerate}
\end{lem}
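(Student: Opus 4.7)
My plan is to verify each of the four parts in turn, relying on the asymptotics for $(q_n)_{n \geq 0}$ collected in Section~\ref{sec:propqn}, where throughout I write $n = \ell m + k$.

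\textbf{Part (a).} I would simply unfold definitions. Since consecutive convergent denominators are coprime, $tq_{n-1}/q_n \notin \ZZ$ for $t \in \{1, \ldots, q_n - 1\}$, and combining $(q_n - t)q_{n-1} \equiv -tq_{n-1} \pmod{q_n}$ with $\{-x\} = 1 - \{x\}$ for non-integer $x$ yields $\xi_{m(q_n-t)} = -\xi_{mt}$. Substituting into \eqref{eq:smalt} and invoking $\sin(\pi - x) = \sin x$ produces $s_{m(q_n-t)} = s_{mt}$; combining the antisymmetry of $\xi$ with $\cot(\pi - x) = -\cot x$ gives the analogous identity for $h_{mt}$.

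\textbf{Part (b).} I would write $s_{m0} = 2\sin(\pi|e_kb^m|/2)$ (since $\xi_{m0}=-1/2$) and, for $t \in \{1, \ldots, q_n - 1\}$, $s_{mt} = 2\sin(\pi\theta_{mt})$ with $\theta_{mt} := t/q_n - \xi_{mt}|e_kb^m|$. By the symmetry of $\sin$ about $\pi/2$, the inequality $s_{mt} > s_{m0}$ reduces to $\min(\theta_{mt}, 1 - \theta_{mt}) > |e_kb^m|/2$. Since $|\xi_{mt}| \leq 1/2$, the worst case (which by part~(a) occurs at $t = 1$ or $t = q_n - 1$) reads $1/q_n - |e_kb^m|/2 > |e_kb^m|/2$, equivalently $q_n|e_kb^m| < 1$. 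By Lemma~\ref{QnEstimates}, $q_n|b|^m \to c_k$, so $q_n|e_kb^m| \to |c_ke_k|$, which is strictly less than $1$ by Lemma~\ref{lem:absckek}. Hence the inequality holds for all sufficiently large $m$.

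\textbf{Part (c).} This is an immediate consequence of Lemma~\ref{lem:qnquot}: writing $q_{n-1}/q_n = \alpha_{\sigma_k} + \mathcal{O}(|b|^{2m})$ and multiplying by $t$ gives $tq_{n-1}/q_n = t\alpha_{\sigma_k} + \mathcal{O}(t|b|^{2m})$. For $m$ large enough that this perturbation is smaller than the distance from $t\alpha_{\sigma_k}$ to the nearest integer, the fractional parts differ by the same amount, which establishes $\xi_{mt} - \xi_{\infty t} = \mathcal{O}(t|b|^{2m})$.

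\textbf{Part (d)} is the step I expect to require the most care, since several error terms of different orders must be tracked simultaneously. I would Taylor expand $\cot(\pi t/q_n) = (q_n/\pi t)\bigl(1 + \mathcal{O}((t/q_n)^2)\bigr)$ and $\sin(\pi|e_kb^m|\xi_{mt}) = \pi|e_kb^m|\xi_{mt} + \mathcal{O}(|b|^{3m})$, then multiply out and isolate the leading term $(q_n|e_kb^m|\xi_{mt})/t$. Replacing $q_n|e_kb^m| = |c_ke_k| + \mathcal{O}(|b|^{2m})$ via Lemma~\ref{QnEstimates} and $\xi_{mt} = \xi_{\infty t} + \mathcal{O}(t|b|^{2m})$ via part~(c) recovers exactly $h_{\infty t}$. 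The remaining residuals are $\cot(\pi t/q_n) \cdot \mathcal{O}(|b|^{3m}) = \mathcal{O}(|b|^{2m}/t)$ and the cross-terms from the Taylor expansion of $\cot$, each of which can be bounded by $\mathcal{O}(t|b|^{2m})$ after applying $q_n = \mathcal{O}(|b|^{-m})$ from Lemma~\ref{QnEstimates}. Careful bookkeeping then yields $h_{mt} - h_{\infty t} = \mathcal{O}(t|b|^{2m})$, from which the pointwise convergence $h_{mt} \to h_{\infty t}$ for fixed $t$ is immediate.
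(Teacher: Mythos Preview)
Your proposal is correct and follows essentially the same approach as the paper's proof: each part uses the same ingredients (the identity $\{-x\}=1-\{x\}$ and trigonometric symmetries for (a), the reduction to $q_n|e_kb^m|<1$ via Lemmas~\ref{QnEstimates} and~\ref{lem:absckek} for (b), Lemma~\ref{lem:qnquot} for (c), and the Taylor expansions of $\cot$ and $\sin$ combined with Lemma~\ref{QnEstimates} and part~(c) for (d)). The only cosmetic difference is in (b), where the paper argues that $s_{mt}$ is increasing in $t$ on $\{1,\ldots,\lfloor q_n/2\rfloor\}$ and then checks $s_{m1}>s_{m0}$, whereas you bound $\min(\theta_{mt},1-\theta_{mt})$ directly; both routes land on the same inequality $q_n|e_kb^m|<1$.
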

\begin{proof}
For ease of notation, let us again write $n=\ell m+k$. 

We first verify \ref{it:propsm1}. The fact that $\{-x\}= 1-\{ x\}$ for $x\in \mbb{R}\setminus \mbb{Z}$ immediately implies $\xi_{mt}=-\xi_{m(q_n-t)}$. Combining this with $\sin(\pi - x) = \sin x$, we get
\begin{align*}
	s_{m(q_n-t)} &= 2 \sin \pi\left(\frac{q_n-t}{q_n} - \xi_{m(q_n-t)}|e_{k}b^m| \right) \\
	&= 2 \sin \pi\left( 1- \left( \frac{t}{q_n}- \xi_{mt} |e_{k}b^m| \right) \right) = s_{mt},
\end{align*}
and likewise since $\cot(\pi-x) =-\cot x$ and $\sin x$ is an odd function, we have
\begin{align*}
h_{m(q_n-t)} = \cot \pi \left( 1- \frac{t}{q_n}\right) \sin \left( - \pi |e_kb^m| \xi_{mt}\right) = h_{mt}
\end{align*}
for every $t \in \{ 0,1, \ldots , q_n-1\}$.

Now let us verify \ref{it:propsm2}. In light of \ref{it:propsm1}, it is enough to verify $s_{mt} > s_{m0}$ for $t \in \{ 1,2, \ldots ,  \lfloor q_n/2 \rfloor \}$.
Writing $s_{mt}$ as in \eqref{eq:smalt}, and recalling that $|\xi_{mt}| < 1/2$ for these values of $t$, it is clear that $s_{mt} > s_{m(t-1)}$, and in particular
\begin{equation*}
s_{m1} > 2 \sin \pi \left( \frac{1}{q_n} -\frac{1}{2}|e_kb^m| \right) > 2\sin \frac{\pi |e_kb^m|}{2} = s_{m0} ,
\end{equation*}
if only $|e_kb^m|<1/q_n$. This in turn follows from Lemmas \ref{QnEstimates} and \ref{lem:absckek}, as
\begin{equation*}
q_n |e_kb^m| = |c_ke_k| + \mathcal{O}(|b|^{2m}) < 1
\end{equation*}
for sufficiently large values of $m$. 

Finally, we verify \ref{it:propsm3} and \ref{it:propsm4}. It follows directly from Lemma \ref{lem:qnquot} that 
\begin{equation*}
\xi_{mt} = \{ t\alpha_{\sigma_k}\} - \frac{1}{2} + \mathcal{O}(tb^{2m}) = \xi_{\infty t} + \mathcal{O}(tb^{2m}),
\end{equation*}
which confirms \ref{it:propsm3}. For property \ref{it:propsm4}, we use $\cot x = (1/x)(1+\mathcal{O}(x^2))$ and $\sin x = x(1+\mathcal{O}(x^2))$ to rewrite $h_{mt}$ as
\begin{equation*}
h_{mt} = \frac{q_n|e_kb^m|\xi_{mt}}{t} \left( 1+\mathcal{O}(t^2b^{2m}) \right),
\end{equation*}
where we have also exploited that $1/q_n=\mathcal{O}(|b|^{m})$. Moreover, since $q_n|b|^{m}=c_k+\mathcal{O}(b^{2m})$ by Lemma \ref{QnEstimates}, we get
\begin{equation*}
h_{mt} =  \frac{|c_ke_k| \xi_{mt}}{t} \left( 1+ \mathcal{O}(t^2b^{2m}) \right) = \frac{|c_ke_k|\xi_{mt}}{t} + \mathcal{O}(tb^{2m}),
\end{equation*}
and finally by recalling property \ref{it:propsm3} it follows that $h_{mt} = h_{\infty t} + \mathcal{O}(tb^{2m})$. This confirms \ref{it:propsm4}, and completes the proof of Lemma \ref{lem:propsm}.
\end{proof}


\subsection{Decomposition of $Q_{\ell m +k}(\alpha)$ \label{subsec:decomp}}
We are now equipped to decompose the sine product $Q_{\ell m+k}(\alpha)$.

\begin{lem}
\label{lem:decomp}
Fix $k \in \{ 0,1, \ldots , \ell-1\}$, and for integers $m\geq 1$ and $t \in \{ 0,1, \ldots , q_{\ell m+k}-1\}$, let $s_{mt}$ be given in \eqref{eq:sm}.
The product of sines $Q_{\ell m +k}(\alpha)$ can be written as
$$Q_{\ell m + k}(\alpha) = \prod_{r=1}^{q_{\ell m +k}} |2\sin \pi r \alpha | = A_mB_mC_m ,$$ 
where:
\begin{align}
A_m &= \left| 2q_{\ell m +k} \sin \pi e_k b^m \right| \label{eq:Am}\\
B_m &= \left| \prod_{t=1}^{q_{\ell m +k}-1} \frac{s_{mt}}{2 \sin (\pi t / q_{\ell m +k})} \right| \label{eq:Bm} \\
C_m &= \prod_{t=1}^{(q_{\ell m +k}-1)/2} \left( 1- \frac{s_{m0}^2}{s_{mt}^2} \label{eq:Cm} \right)
\end{align}
\end{lem}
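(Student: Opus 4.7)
The plan is to verify $Q_{\ell m +k}(\alpha) = A_m B_m C_m$ by substituting the identity $q_n \alpha = p_n + e_k b^m$ from Lemma \ref{BasicPropQn} (where $n = \ell m + k$) into the product defining $Q_n$ and reorganizing the $q_n$ factors. First I would isolate the factor $r = q_n$, which contributes $|2\sin \pi q_n \alpha| = |2\sin \pi e_k b^m|$; this accounts for all of $A_m$ apart from the extra factor $q_n$. That leftover $q_n$ will be produced later via the standard identity $\prod_{t=1}^{q_n-1}|2\sin (\pi t/q_n)| = q_n$, which simultaneously supplies the denominator of $B_m$.

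For each $r \in \{1,\dots,q_n-1\}$, I would introduce the bijection $r \mapsto t := r p_n \bmod q_n$ onto $\{1,\dots,q_n-1\}$ and write $\beta_m := e_k b^m$, so that $|2\sin \pi r\alpha| = |2\sin \pi(t/q_n + r\beta_m/q_n)|$. From the identity $p_n q_{n-1} - p_{n-1}q_n = (-1)^n$ (a rearrangement of \eqref{eq:pq}) we obtain $p_n^{-1} \equiv (-1)^n q_{n-1} \pmod{q_n}$, and hence $r \equiv (-1)^n t q_{n-1} \pmod{q_n}$. Translating to fractional parts gives $r/q_n = 1/2 + (-1)^n \xi_{mt}$. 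Combined with the sign relation $\beta_m = (-1)^{n-1}|\beta_m|$---which follows from the explicit form of $e_k$ together with the observation that $b<0$ when $\ell$ is odd and $b>0$ when $\ell$ is even---this yields
$$\frac{r\beta_m}{q_n} \;=\; \frac{\beta_m}{2} + (-1)^n\beta_m\xi_{mt} \;=\; \frac{\beta_m}{2} - |\beta_m|\xi_{mt}.$$

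The key step is to pair $r$ with $q_n - r$ (corresponding to pairing $t$ with $q_n - t$, using $\xi_{m(q_n-t)} = -\xi_{mt}$ from Lemma \ref{lem:propsm}\ref{it:propsm1}). Applying $|2\sin \pi(1-x)| = |2\sin \pi x|$, each pair becomes
$$|2\sin \pi(A + \beta_m/2)|\cdot|2\sin \pi(A - \beta_m/2)|, \qquad A := \frac{t}{q_n} - |\beta_m|\xi_{mt},$$
so that $2\sin \pi A = s_{mt}$. Using $\sin(X+Y)\sin(X-Y) = \sin^2 X - \sin^2 Y$ and noting that $s_{m0} = 2\sin(\pi|\beta_m|/2)$, each pair evaluates to $s_{mt}^2 - s_{m0}^2 = s_{mt}^2(1 - s_{m0}^2/s_{mt}^2)$, which is positive for sufficiently large $m$ by Lemma \ref{lem:propsm}\ref{it:propsm2}.

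Finally I would assemble the full product. When $q_n$ is odd the pairs exhaust $\{1,\dots,q_n-1\}$ and, using $s_{mt}=s_{m(q_n-t)}$, one has $\prod_{t=1}^{(q_n-1)/2}s_{mt}^2 = \prod_{t=1}^{q_n-1}s_{mt}$, so that $\prod_{r=1}^{q_n-1}|2\sin \pi r\alpha| = \bigl(\prod_{t=1}^{q_n-1}s_{mt}\bigr)\cdot C_m$; multiplying and dividing by $\prod_{t=1}^{q_n-1}2\sin(\pi t/q_n) = q_n$ and bringing back the $r=q_n$ factor gives exactly $A_m B_m C_m$. When $q_n$ is even the self-paired term $t = q_n/2$ (for which $\xi_{m q_n/2} = 0$, since $\gcd(q_{n-1},q_n)=1$ forces $q_{n-1}$ odd) satisfies $|2\sin \pi(q_n/2)\alpha| = |2\cos \pi\beta_m/2| = (s_{m q_n/2}^2 - s_{m0}^2)^{1/2}$, and this half-exponent factor is absorbed into $C_m$ via the generalized product notation of Section \ref{sec:notation}, so the formula holds uniformly in the parity of $q_n$. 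The main obstacle is the sign bookkeeping in the second step: one must carefully verify that the sign $(-1)^n$ arising from $p_n^{-1} \pmod{q_n}$ combines with the sign $(-1)^{n-1}$ of $\beta_m$ to produce exactly the $-|\beta_m|\xi_{mt}$ form dictated by the definition of $s_{mt}$, so that $A$ above coincides with the argument inside $s_{mt}$; the even-$q_n$ case is a secondary technicality.
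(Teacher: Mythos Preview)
Your argument is correct and follows essentially the same route as the paper: both proofs isolate the factor $r=q_n$, perform the bijection $t=rp_n\bmod q_n$ with the sign analysis $e_kb^m=(-1)^{n-1}|e_kb^m|$, pair $r$ with $q_n-r$ to obtain $s_{mt}^2-s_{m0}^2$ via a product-to-difference identity, and normalize using $\prod_{t=1}^{q_n-1}2\sin(\pi t/q_n)=q_n$. The only cosmetic difference is that the paper squares $Q_n$ at the outset (so the pairing $r\leftrightarrow q_n-r$ is automatic and no parity case is needed), whereas you pair directly and treat even $q_n$ by hand; both are fine.
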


\begin{proof}
Again we introduce $n=\ell m +k$ for ease of notation. We then have $Q_{\ell m +k}(\alpha) = Q_n(\alpha) = \prod_1^{q_n} |2\sin \pi r \alpha|$, and 
\begin{align*}
	Q^2_{n}(\alpha) &=\left(2\sin \pi q_n \alpha \right)^2\left(\prod_{r=1}^
		{q_n-1} 2 \sin \pi r \alpha \right)^2\\
	&=\left(2\sin \pi q_n \alpha \right)^2\prod_{r=1}^{q_n-1} \left( 2 \sin \pi r \alpha  \right) \left(2 \sin \pi 
		(q_n-r) \alpha \right) \\
	&=\left(2\sin \pi q_n \alpha \right)^2\prod_{r=1}^{q_n-1} 2 \left( \cos(2\pi r \alpha-\pi 
		q_n \alpha) - \cos \pi q_n \alpha \right).\\
\end{align*}
For the last equality we have used the identity $\sin(x)\sin(y)= (\cos(x-y)-\cos(x+y))/2$. Inserting $q_n \alpha = p_n+e_kb^m$ from Lemma \ref{BasicPropQn} in the expression above, we get
\begin{align*} 
	Q_n^2(\alpha)&=\left(2\sin \pi e_kb^m \right)^2\prod_{r=1}^{q_n-1} 2(-1)^{p_n} \left( \cos \left(2\pi 
		r \alpha - \pi e_kb^m \right) - \cos \pi e_kb^m \right)\\
	&=\left(2\sin \pi e_kb^m \right)^2 \prod_{r=1}^{q_n-1}4 \left(\sin^2\left(\pi 
		r \alpha- \frac{\pi}{2} e_{k}b^m\right)-\sin^2\left(\frac{\pi}{2} e_{k}b^m\right)\right).\\
\end{align*}
Observe that we have used the identity $\cos(x)=1-2\sin^2(x/2)$ and that $(-1)^{(p_n+1)(q_n-1)} =1$. The latter follows from the fact that $\gcd(p_n,q_n)=1$, and accordingly either $(p_n+1)$ or $(q_n-1)$ is an even number. This concludes the rebasing of the argument described in the introduction to this section.

We now aim to express $Q_n^2(\alpha)$ as a product of perturbed rational sines. Again we use the identity $\alpha = p_n/q_n + e_kb^m/q_n$ from Lemma \ref{BasicPropQn} to get
\begin{align*}
	\sin^2 \left(\pi r \alpha- \frac{\pi}{2} e_{k}b^m \right) = \sin^2 \pi\left(\frac
		{rp_{n}}{q_n}+e_{k}b^m \left(\frac{r}{q_n}-\frac{1}{2}\right)\right).
\end{align*}
By the substitution $t= rp_n \bmod q_n$, and recalling from \eqref{eq:pq} that $p_nq_{n-1} = (-1)^n \bmod q_n$, we have
\begin{align*}
	\sin^2 \left(\pi r \alpha- \frac{\pi}{2} e_{k}b^m \right) &= \sin^2 \pi\left(\frac{rp_{n} \bmod{q_n}}{q_n}+e_{k}b^m\left(\frac{r}{q_n}-\frac{1}
	{2}\right)\right) \\
	&= \sin^2 \pi\left(\frac{t}{q_n}+ e_{k}b^m \left(\frac{(-1)^ntq_{n-1} \bmod{q_n}}{q_n}- \frac{1}{2}\right)\right) \\
	&=\frac{1}{4}s_{mt}^2,
\end{align*}
with $s_{mt}$ given in \eqref{eq:sm}, and where we have used $e_kb^m=(-1)^{n-1}|e_kb^m|$ and
$$\frac{(-1)^ntq_{n-1} \bmod{q_n}}{q_n}-\frac{1}{2}= \left\{\frac{(-1)^ntq_{n-1}}{q_n} \right\} - \frac{1}{2} = (-1)^n\left(\left\{\frac{tq_{n-1}}{q_n}\right\} - \frac{1}{2} \right).$$ 
As $r$ runs through the values $1,2,\ldots , q_n-1$, so does $t=r p_n \bmod q_n$. Accordingly, we get
\begin{align*}
	Q^2_n(\alpha) &=(2\sin \pi e_kb^m)^2 \prod_{t=1}^{q_n-1}
		 \left( s^2_{mt}-s^2_{m0} \right)\\
	&=(2\sin\pi e_{k}b^m)^2 \prod_{t=1}^{q_{n}-1}s^2_{mt}\prod_{t=1}^{q_{n}-1} \left(1-
		\frac{s^2_{m0}}{s^2_{mt}} \right)\\
	&=(2q_n\sin \pi e_{k}b^m)^2 \prod_{t=1}^{q_{n}-1}\frac{s^2_{mt}}{4\sin^2(\pi t/ q_n)}\prod_{t=1}^{q_{n}-1} \left(1-\frac{s^2_{m0}}{s^2_{mt}}\right).
\end{align*}
For the last equality above we have used the well-known identity
\begin{equation*}
\prod_{r=1}^{q-1} 2 \sin \left( \frac{\pi rp}{q} \right) = q 
\end{equation*}
whenever $p,q \in \ZZ$ satisfy $\gcd (p,q) =1$ (see e.g.\ \cite{mullin} for a nice proof). Finally, we recall from Lemma \ref{lem:propsm}\ref{it:propsm1} that $s_{mt}=s_{m(q_n-t)}$ and hence $s_{mt}^2 = s_{m(q_n-t)}^2$ for every $t \in \{ 0, 1, \ldots , q_n-1\}$.
With our generalized notion of products introduced in Section \ref{sec:notation}, we thus get
\begin{align*}
	\prod_{t=1}^{q_{n}-1}\left( 1-\frac{s^2_{m0}}{s^2_{mt}} \right)= \prod_{t=1}^{(q_n-1)/2} \left(1-\frac{s^2_{m0}}{s^2_{mt}}\right)^2
\end{align*}
Inserting this in the expression for $Q_n^2(\alpha)$ above and taking the square root of both sides, we arrive at
$$Q_n(\alpha) = Q_{\ell m +k}(\alpha) = A_mB_mC_m ,$$
where $A_m$, $B_m$ and $C_m$ are given in \eqref{eq:Am}, \eqref{eq:Bm} and \eqref{eq:Cm}, respectively.
\end{proof}


\subsection{Convergence of $A_m$ \label{sec:Amconv}}
Let us now see that $A_m$ in \eqref{eq:Am} converges as $m\rightarrow \infty$. Since $\sin x = x+\mathcal{O} (x^3)$, we have
\begin{equation*}
A_m  = \left| 2q_{\ell m +k} \left( \pi e_k b^m + \mathcal{O}(b^{3m}) \right) \right| .
\end{equation*}
By Lemma \ref{QnEstimates} and $|b|<1$ it thus follows that
\begin{equation}
\label{eq:limAm}
\lim_{m \rightarrow \infty} A_m = 2\pi |e_kc_k|,
\end{equation}
where $c_k$ and $e_k$ are the constants given in \eqref{eq:ck} and \eqref{eq:conste}, respectively. Alternatively, using the expression for $|e_kc_k|$ given in Lemma \ref{lem:absckek}, we have
\begin{equation*}
\lim_{m \rightarrow \infty} A_m = \frac{2\pi q_{\ell}(\alpha_{\tau_k})}{(a-b)}.
\end{equation*}


\section{Convergence of $C_m$ \label{sec:Cm}}
In this section we show that the product
$$C_m = \prod_{t=1}^{(q_{\ell m +k}-1)/2} \left( 1- \frac{s_{m0}^2}{s_{mt}^2} \right)$$ 
is convergent. This is not quite straightforward, as it is not obvious that the sequence $(C_m)_{m\geq1}$ is monotonically decreasing. However, we will see that $(C_m)_{m\geq 1}$ is comparable to a monotonically decreasing sequence of products bounded below by a positive number.
\begin{thm}
\label{thm:convCm}
The sequence $C_m$ converges to the strictly positive limit
\begin{equation}
\label{eq:convCm}
\lim_{m\rightarrow \infty} C_m = \prod_{t=1}^{\infty} \left( 1- \frac{1}{4\left( t/ |c_ke_k|-\xi_{\infty t} \right)^2}\right),
\end{equation}
where $|c_ke_k|$ is given in \eqref{eq:altckek} and $\xi_{\infty t} = \{ t \alpha_{\sigma_k}\} -1/2$. 
\end{thm}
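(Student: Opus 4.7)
The plan is to combine pointwise (in $t$) convergence of the first $T$ factors with a tail estimate that is uniform in $m$, then pass to the limit via a diagonal argument. Throughout I write $n=\ell m + k$.

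First I would analyze the individual factors for each fixed $t$. A Taylor expansion gives
\[
s_{m0} = 2\sin(\pi|e_k b^m|/2) = \pi|e_k b^m|\bigl(1+\mathcal{O}(|b|^{2m})\bigr),
\]
and, for any fixed $t\geq 1$,
\[
s_{mt} = 2\pi\bigl(t/q_n - \xi_{mt}|e_k b^m|\bigr)\bigl(1+\mathcal{O}(|b|^{2m})\bigr).
\]
Factoring out $|e_k b^m|$, combining with Lemma \ref{QnEstimates} (which gives $q_n|b|^m = c_k + \mathcal{O}(|b|^{2m})$) and with Lemma \ref{lem:propsm}(c) ($\xi_{mt}\to\xi_{\infty t}$), I get the pointwise limit
\[
\lim_{m\to\infty}\frac{s_{m0}^2}{s_{mt}^2} = \frac{1}{4\bigl(t/|c_k e_k|-\xi_{\infty t}\bigr)^2}.
\]
Hence for any fixed $T\in\NN$, $\prod_{t=1}^{T}(1-s_{m0}^2/s_{mt}^2)$ converges as $m\to\infty$ to the corresponding truncation of the target infinite product.

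The next and central step is a tail estimate uniform in $m$. Lemma \ref{lem:absckek} delivers $|c_k e_k|<1$, so Lemma \ref{QnEstimates} gives $q_n|e_kb^m|<1$ for all sufficiently large $m$. Together with $|\xi_{mt}|\leq 1/2$, this pins the argument of the sine defining $s_{mt}$ into $\bigl[\pi(t-1/2)/q_n,\pi(t+1/2)/q_n\bigr]\subseteq[0,\pi/2]$ for every $t\in\{1,\ldots,(q_n-1)/2\}$. The elementary inequality $\sin x\geq(2/\pi)x$ on $[0,\pi/2]$ then yields $s_{mt}\geq 4(t-1/2)/q_n$, and therefore
\[
\frac{s_{m0}^2}{s_{mt}^2} \;\leq\; \frac{\pi^2(|e_kb^m|q_n)^2}{(2t-1)^2} \;\leq\; \frac{C}{(2t-1)^2},
\]
for some $C$ independent of $m$ (and of $t$) once $m$ is large. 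Since each factor lies in $(0,1]$ by Lemma \ref{lem:propsm}(b), the inequality $\prod(1-x_i)\geq 1-\sum x_i$ for $x_i\in[0,1]$ shows that the tail product $\prod_{t>T}^{(q_n-1)/2}(1-s_{m0}^2/s_{mt}^2)$ tends to $1$ as $T\to\infty$, uniformly in $m$.

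A standard $\varepsilon/3$ diagonal argument then combines the two ingredients: $|C_m-C_\infty|$ is bounded by the error in the finite product for the chosen $T$ (which vanishes in $m$) plus the two tail errors (which vanish in $T$ uniformly in $m$). This identifies the limit as the claimed infinite product. Strict positivity is then automatic: $|c_k e_k|<1$ together with $|\xi_{\infty t}|\leq 1/2$ forces $t/|c_k e_k|-\xi_{\infty t}>1/2$ for every $t\geq 1$, so each factor of the infinite product is strictly positive, and the $\mathcal{O}(1/t^2)$ majorization guarantees convergence to a nonzero value.

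The main obstacle is the uniform tail estimate. The number of factors in $C_m$ grows with $m$, which rules out a naive dominated-convergence argument; what saves the day is the strict inequality $|c_k e_k|<1$ from Lemma \ref{lem:absckek}, which ensures the perturbation $|e_k b^m|$ is genuinely smaller than $1/q_n$ and so keeps every $s_{mt}$ safely away from $s_{m0}$, yielding the uniform $\mathcal{O}(1/t^2)$ domination.
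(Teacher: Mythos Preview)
Your argument is correct and rests on the same two ingredients as the paper's proof: a Taylor expansion identifying the pointwise limit of each factor, and a lower bound $s_{mt}\gtrsim t/q_n$ via $\sin x\geq(2/\pi)x$ that controls the tail. The organization differs: the paper splits the product at a moving threshold $\eta=\lceil|b|^{-3m/5}\rceil$ and shows directly that both pieces converge, whereas you split at a fixed $T$, establish a tail bound $s_{m0}^2/s_{mt}^2\leq C/(2t-1)^2$ uniform in $m$, and finish with a standard $\varepsilon/3$ argument. Your route is arguably cleaner, since the uniform majorant makes the passage to the limit a one-line application of dominated convergence for products.

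Your treatment of strict positivity is also simpler than the paper's. The paper verifies the explicit bound $\sum_{t\geq1}1/u_t^2<1$ (with $u_t=2(t/|c_ke_k|-\xi_{\infty t})$), splitting into the cases $\ell=1$ and $\ell>1$ and invoking Lemma~\ref{ProductSumLemma}. You instead observe that $u_t>1$ for all $t\geq1$ (from $|c_ke_k|<1$ and $|\xi_{\infty t}|<1/2$) together with $\sum 1/u_t^2<\infty$, which already forces $\prod(1-1/u_t^2)>0$ by the standard theory of infinite products. This avoids the case analysis entirely.
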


We will need the following Lemma for proving Theorem \ref{thm:convCm}.
\begin{lem}[{\cite[Lemma 4.3]{MV15}}]\label{ProductSumLemma}
	For $n\geq 2$ and real numbers $a_t$, $t =1,2, \ldots , n$, satisfying $A:=\sum_{t=1}^{n}|a_t|<1$, we have
	$$1- A<\prod_{t=1}^n(1+a_t) <\frac{1}{1-A}.$$
\end{lem}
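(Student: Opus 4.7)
The plan is to prove both inequalities by reducing them to the classical Weierstrass product inequality applied with $b_t=|a_t|$. As a preliminary step I would establish, by induction on $n$, the auxiliary fact that if $b_1,\ldots,b_n\in[0,1)$ and $B:=\sum_{t=1}^n b_t<1$, then
$$\prod_{t=1}^n (1-b_t) \ge 1-B,$$
with strict inequality as soon as at least two of the $b_t$ are positive (for $n\ge 2$). The base case $n=2$ is the identity $(1-b_1)(1-b_2)=1-b_1-b_2+b_1b_2$. For the inductive step one multiplies the hypothesis by $(1-b_{n+1})\in(0,1]$ and uses that the cross term $B\,b_{n+1}$ is non-negative.

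For the lower bound in the lemma, I would observe that $1+a_t\ge 1-|a_t|$ for every $t$, since $a_t\ge -|a_t|$. The Weierstrass inequality above, applied with $b_t=|a_t|$, then yields
$$\prod_{t=1}^n (1+a_t) \;\ge\; \prod_{t=1}^n (1-|a_t|) \;\ge\; 1-A.$$

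For the upper bound, the key elementary observation is that $|a_t|<1$ implies $(1+|a_t|)(1-|a_t|)=1-a_t^2\le 1$, hence $1+|a_t|\le 1/(1-|a_t|)$. Combining with $1+a_t\le 1+|a_t|$ and using Weierstrass again gives
$$\prod_{t=1}^n (1+a_t) \;\le\; \prod_{t=1}^n (1+|a_t|) \;\le\; \prod_{t=1}^n \frac{1}{1-|a_t|} \;=\; \frac{1}{\prod_{t=1}^n(1-|a_t|)} \;\le\; \frac{1}{1-A}.$$

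The only subtlety is the strict inequality claimed in the lemma. Since $A>0$ is the non-trivial case (if all $a_t=0$ then every quantity equals $1$), at least one $|a_t|>0$; the factor $1-a_t^2<1$ in the upper bound chain, and the analogous strict inequality in the inductive proof of Weierstrass (where the cross term $B\,b_{n+1}$ is strictly positive as soon as two of the $b_t$ are), both yield strictness. The result is essentially a textbook inequality, so I do not anticipate any real obstacle — the work is entirely in the Weierstrass induction and the manipulation $1+|a_t|\le 1/(1-|a_t|)$.
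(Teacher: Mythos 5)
The paper offers no proof of this lemma at all --- it is quoted verbatim from \cite[Lemma 4.3]{MV15} --- so there is nothing internal to compare against. Your argument via the two Weierstrass-type inequalities $\prod_{t}(1-b_t)\ge 1-\sum_t b_t$ and $1+b_t\le 1/(1-b_t)$ is the standard route, and all the reductions ($1+a_t\ge 1-|a_t|$, $1+a_t\le 1+|a_t|$, positivity of every factor so that the termwise bounds multiply) are in order; the non-strict versions of both bounds are fully established.

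The one weak spot is strictness, and it is partly a defect of the statement as quoted. Besides the case where all $a_t=0$ (which you note), the strict \emph{lower} bound also fails when exactly one $a_t$ is nonzero and negative: for $n=2$, $a_1=-1/2$, $a_2=0$ one gets $\prod_t(1+a_t)=1/2=1-A$. This is consistent with your own correct observation that the Weierstrass induction is strict only when at least two of the $b_t$ are positive, but it contradicts your closing paragraph, where you assert that ``at least one $|a_t|>0$'' already forces strictness of the lower bound. Either weaken the conclusion to $\le$ and $\ge$, or add the hypothesis that at least two of the $a_t$ are nonzero (or that some $a_t>0$) for the strict lower bound. For this paper the point is immaterial: the remark following the lemma records that only $1-A<\prod_t(1-|a_t|)<1/(1-A)$ is used, and in the actual applications, e.g.\ \eqref{eq:largeetabound} and \eqref{eq:smalletabound}, the relevant $|a_t|$ are all strictly positive (and non-strict inequalities would in any case suffice).
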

\begin{rem*}
In fact, what we will need is that
$$1-A < \prod_{t=1}^n (1-|a_t|) < \frac{1}{1-A}.$$
This is an immediate consequence of Lemma \ref{ProductSumLemma}.
\end{rem*}

\begin{proof}[Proof of Theorem \ref{thm:convCm}]
For ease of notation we again write $n=\ell m + k$, and begin by developing estimates for the quotients $s_{m0}/s_{mt}$. We have 
\begin{equation*}
s_{m0} = 2 \sin \left( \pi |e_kb^m|/2 \right) = \pi |e_kb^m| \left( 1+\mathcal{O}(b^{2m})\right),
\end{equation*}
and for $t\geq 1$ it follows from Lemmas \ref{QnEstimates} and \ref{lem:propsm}\ref{it:propsm3} that
\begin{equation}
\label{eq:smapprox}
\begin{aligned}
s_{mt} &= 2 \sin \pi \left( \frac{t}{q_n} - |e_kb^m|\xi_{mt} \right) \\
&= 2 \sin \pi t |b|^m \left( \frac{1}{c_k} - \frac{|e_k|\xi_{\infty t}}{t} + \mathcal{O}(b^{2m}) \right).
\end{aligned}
\end{equation}
We now split the values of $t$ at $\eta = \lceil |b|^{-3m/5}\rceil$, and treat $t \leq \eta$ and $t > \eta$ separately in order to find appropriate bounds on $s_{mt}$ in \eqref{eq:smapprox}. For $t> \eta$, we use $\sin x \geq 2x/\pi$ for $x \in [0, \pi/2]$ to obtain
\begin{equation*}
s_{mt} \geq 4t|b|^m \left( \frac{1}{c_k} - \frac{|e_k|\xi_{\infty t}}{t} + \mathcal{O}(b^{2m}) \right).
\end{equation*}
Recall that $c_k>0$ and $|\xi_{\infty t}| \leq 1/2$. Thus, for sufficiently large $m$ (and thereby sufficiently large $t$), we have $s_{mt} > 2\eta |b|^m/c_k$ and 
\begin{equation*}
\frac{s_{m0}}{s_{mt}} \leq \frac{\pi |e_kb^m|\left( 1+\mathcal{O}(b^{2m}) \right)}{2\eta|b|^m/c_k} = \frac{\pi |c_ke_k|}{2\eta} \left( 1+\mathcal{O}(b^{2m}) \right) = \mathcal{O}(\eta^{-1}).
\end{equation*}
It follows that 
\begin{equation*}
\sum_{t=\eta+1}^{(q_n-1)/2} \frac{s_{m0}^2}{s_{mt}^2} \leq q_n \cdot \mathcal{O}(\eta^{-2}) = \mathcal{O}(|b|^{m/5}),
\end{equation*}
and accordingly this sum is convergent and smaller than one for sufficiently large $m$. Thus, by Lemma \ref{ProductSumLemma} we get
\begin{equation}
\label{eq:largeetabound}
1 \geq \prod_{t=\eta+1}^{(q_n-1)/2} \left( 1-\frac{s_{m0}^2}{s_{mt}^2} \right) >1- \sum_{t=\eta+1}^{(q_n-1)/2} \frac{s_{m0}^2}{s_{mt}^2} \geq 1-\mathcal{O}(|b|^{m/5}).
\end{equation}

Now consider $t \leq \eta$. It is clear from \eqref{eq:smapprox} that by choosing $m$ sufficiently large, the argument in the sine function $s_{mt}$ can be made arbitrarily small in this case. Applying $\sin x = x+\mathcal{O}(x^3)$, we get
\begin{align*}
s_{mt} &= 2\pi |b|^m \left( \frac{t}{c_k}- |e_k| \xi_{\infty t} + \mathcal{O}(tb^{2m}) \right) + \mathcal{O}(|b|^{6m/5}) \\
&= \pi |e_kb^m| \left( u_t + \mathcal{O}(|b|^{m/5})\right),
\end{align*}
where we have introduced the notation
\begin{equation}
\label{eq:ut}
u_t = 2 \left( \frac{t}{|c_ke_k|} - \xi_{\infty t} \right) = 2 \left( \frac{t}{|c_ke_k|}-\{t\alpha_{\sigma_k}\}+\frac{1}{2} \right).
\end{equation}
We thus have
\begin{equation*}
\frac{s_{m0}}{s_{mt}} = \frac{\pi |e_kb^m| \left( 1+\mathcal{O}(b^{2m})\right)}{\pi |e_kb^m| \left( u_t+\mathcal{O}(|b|^{m/5})\right)} = \frac{1+\mathcal{O}(|b|^{m/5})}{u_t},
\end{equation*}
and moreover
\begin{align*}
\prod_{t=1}^{\eta}\left( 1-\frac{s_{m0}^2}{s_{mt}^2} \right) &= \prod_{t=1}^{\eta} \left( 1- \frac{1}{u_t^2} - \frac{\mathcal{O}(|b|^{m/5})}{u_t^2} \right) \\
&= \prod_{t=1}^{\eta} \left( 1-\frac{1}{u_t^2}\right) \prod_{t=1}^{\eta} \left( 1- \frac{\mathcal{O}(|b|^{m/5})}{u_t^2-1}\right).
\end{align*}
We look closer at the two products on the final line above. Since $|\xi_{\infty t}|<1/2$ and $|c_ke_k|<1$, we see from \eqref{eq:ut} that $u_t >1$ for all $1 \leq t \leq \eta$. This guarantees that both products are well-defined. Moreover, we see that $u_t$ behaves as $2t/|c_ke_k|$ for large $t$. Hence by comparison with $\sum 1/t^2 = \pi^2/6$, the sum $\sum 1/(u_t^2-1)$ converges, and it follows that $\sum \mathcal{O}(|b|^{m/5})/(u_t^2-1) = \mathcal{O}(|b|^{m/5})$. The latter sum is thus smaller than one, provided $m$ is sufficiently large, and again it follows from Lemma \ref{ProductSumLemma} that
\begin{equation}
\label{eq:smalletabound}
1 > \prod_{t=1}^{\eta} \left( 1- \frac{\mathcal{O}(|b|^{m/5})}{u_t^2-1}\right) \geq 1- \sum_{t=1}^{\eta} \frac{\mathcal{O}(|b|^{3m/5})}{u_t^2-1}  = 1- \mathcal{O}(|b|^{m/5}).
\end{equation}
For the second product we introduce the notation 
\begin{equation*}
U_j := \prod_{t=1}^j \left( 1- \frac{1}{u_t^2} \right).
\end{equation*} 
Since $u_t > 1$ for all $t$, the sequence $(U_j)_{j\geq 1}$ is monotonically decreasing and bounded below by zero. Thus, the limit $\lim_{j \rightarrow \infty} U_j$ exists.

By combining the estimates for $t>\eta$ and $t \leq \eta$, we now have 
\begin{equation*}
C_m = U_{\eta} \cdot \prod_{t=1}^{\eta} \left( 1- \frac{\mathcal{O}(|b|^{m/5})}{u_t^2-1}\right) \cdot \prod_{t=\eta+1}^{(q_n-1)/2} \left( 1- \frac{s_{m0}^2}{s_{mt}^2} \right).
\end{equation*}
Taking the limit of both sides as $m\rightarrow \infty$, and recalling \eqref{eq:largeetabound} and \eqref{eq:smalletabound}, we arrive at
\begin{equation}
\label{eq:limCmalmost}
\lim_{m \rightarrow \infty} C_m = \lim_{\eta\rightarrow \infty} U_{\eta} = \prod_{t=1}^{\infty} \left( 1- \frac{1}{u_t^2} \right),
\end{equation}
with $u_t$ given in \eqref{eq:ut}. This nearly completes the proof of Theorem \ref{thm:convCm}. Our claim, however, is that $\lim_{m\rightarrow \infty} C_m$ is strictly positive. This will follow from \eqref{eq:limCmalmost} and Lemma \ref{ProductSumLemma} if we can verify that 
\begin{equation}
\label{eq:smallenough}
\sum_{t=1}^{\infty} \frac{1}{u_t^2} < 1 .
\end{equation}

Let us first verify \eqref{eq:smallenough} for $\ell=1$. In this case, we have $k=0$ and $|c_0e_0|\leq1/\sqrt{5}$ by Lemma \ref{lem:absckek}. It follows that
\begin{equation*}
\sum_{t=1}^{\infty} \frac{1}{u_t^2} \leq \frac{1}{u_1^2}+ \sum_{t=2}^{\infty} \frac{1}{20(t-1)^2} < \frac{1}{4(\sqrt{5}-1/2)^2} + \frac{\pi^2}{120} < 1 .
\end{equation*} 
The case $\ell>1$ is more involved. However, it is easy to show that $p_{\ell}(\alpha_{\tau_k})-2b>0$ in this case, and accordingly it follows from Lemma \ref{lem:absckek} that 
\begin{equation*}
\frac{1}{|c_ke_k|} = \frac{q_{\ell+1}(\alpha_{\tau_k})+ p_{\ell}(\alpha_{\tau_k})-2b}{q_{\ell}(\alpha_{\tau_k})} 
\geq a_k + \frac{q_{\ell-1}(\alpha_{\tau_k})}{q_{\ell}(\alpha_{\tau_k})},
\end{equation*}
where we have also used the classical recursion formula for $q_{\ell}$. By Lemma~\ref{lem:qnquot} we get
\begin{equation*}
\frac{q_{\ell-1}(\alpha_{\tau_k})}{q_{\ell}(\alpha_{\tau_k})} = \frac{p_{\ell}(\alpha_{\sigma_0\tau_k})}{q_{\ell}(\alpha_{\sigma_0\tau_k})}= \frac{p_{\ell}(\alpha_{\sigma_k})}{q_{\ell}(\alpha_{\sigma_k})},
\end{equation*}
(recall from the proof of Lemma \ref{BasicPropC} that $\sigma_k=\sigma_0\tau_k$) and thus we have
\begin{equation}
\label{eq:u1est}
u_1 = 2 \left( \frac{1}{|c_ke_k|}-\alpha_{\sigma_k}+\frac{1}{2}\right) \geq 2\left( a_k + \frac{p_{\ell}(\alpha_{\sigma_k})}{q_{\ell}(\alpha_{\sigma_k})} - \alpha_{\sigma_k}+\frac{1}{2} \right).
\end{equation}
By the standard error estimate \eqref{eq:stderror} for continued fractions, we know that
\begin{equation*}
\left| \frac{p_{\ell}(\alpha_{\sigma_k})}{q_{\ell}(\alpha_{\sigma_k})} - \alpha_{\sigma_k} \right| < \frac{1}{q_{\ell+1}(\alpha_{\sigma_k})q_{\ell}(\alpha_{\sigma_k})} < \frac{1}{2}
\end{equation*}
when $\ell>1$, and inserting this in \eqref{eq:u1est} we find that $u_1\geq 2a_k \geq 2$. For all other terms in the sum $\sum 1/u_t^2$, the estimate $|c_ke_k|<1$ from Lemma \ref{lem:absckek} suffices. We get
\begin{equation*}
\sum_{t=1}^{\infty} \frac{1}{u_t^2} \leq \frac{1}{4} + \sum_{t=2}^{\infty} \frac{1}{4(t-1/2)^2} = \frac{1}{4} + \frac{\pi^2}{8}-1 < 0.49.
\end{equation*}
This verifies \eqref{eq:smallenough} for the case $\ell>1$. Thus, we conclude that $\lim_{m\rightarrow \infty} C_m >0$, and this completes the proof of Theorem \ref{thm:convCm}.
\end{proof} 


\section{Convergence of $B_m$\label{sec:Bm}}
The aim of this section is to verify the convergence of 
\begin{equation*}
B_m = \left| \prod_{t=1}^{q_{\ell m +k}-1} \frac{s_{mt}}{2 \sin (\pi t / q_{\ell m +k})} \right|
\end{equation*}
as $m \rightarrow \infty$. We will see that this requires greater efforts than verifying the convergence of $C_m$. In fact, what we will show is that $\log B_m$ converges to a finite limit, and accordingly $\lim_{m \rightarrow \infty} B_m$ exists and is strictly positive.

For the remainder of this section, let us again ease notation by writing $n=\ell m +k$. We begin by examining each term of the product $B_m$. Recalling the definition of $s_{mt}$ from \eqref{eq:sm}, we have
\begin{align*}
\frac{s_{mt}}{2\sin (\pi t/q_n)} &= \cos \pi |e_kb^m| \xi_{mt} - \cot (\pi t/q_n) \sin \pi |e_kb^m| \xi_{mt} \\
&= 1- 2\sin^2(\pi |e_kb^m| \xi_{mt}/2) - h_{mt},
\end{align*}
with $h_{mt}$ given in \eqref{eq:hm}. Taking $\beta_{mt}:= 2\sin^2(\pi |e_kb^m| \xi_{mt}/2)$, it is easily verified that $\beta_{m(q_n-t)} = \beta_{mt}$ for $t \in \{ 1, \ldots , q_n-1 \}$. Likewise, we recall from Lemma \ref{lem:propsm}\ref{it:propsm1} that $h_{m(q_n-t)}=h_{mt}$, and thus
\begin{equation*}
B_m = \prod_{t=1}^{q_n-1} (1-\beta_{mt}-h_{mt}) = \prod_{t=1}^{(q_n-1)/2} (1-\beta_{mt}-h_{mt})^2 . 
\end{equation*}
This shows that we need only consider $t \in \{ 1, \ldots , (q_n-1)/2 \}$. 

Let us now show that rather than analyzing $B_m$, we may choose to analyze the simpler product
\begin{equation}
\label{eq:Bstar}
B_m^* := \prod_{t=1}^{q_n-1} (1-h_{mt}) = \prod_{t=1}^{(q_n-1)/2} (1-h_{mt})^2.
\end{equation}
Taking logarithms, we get
\begin{equation}
\label{eq:logsBm}
\log (1-\beta_{mt}-h_{mt}) = \log (1-h_{mt})+ \log \left( 1- \frac{\beta_{mt}}{1-h_{mt}}\right).
\end{equation}
Our claim is that the latter term on the right hand side in \eqref{eq:logsBm} will not contribute significantly to the sum $\log B_m = \sum \log (1- \beta_{mt}-h_{mt})$. To see this, let us first estimate the size of $h_{mt}$ and $\beta_{mt}$. Considering only $t \in \{ 1, \ldots , (q_n-1)/2\}$, we use $\cot x < 1/x$ and $\sin x < x$ to obtain
\begin{equation*}
|h_{mt}| = \cot (\pi t /q_n) \sin \pi |e_kb^m| \xi_{mt} \leq \frac{q_n|e_kb^m|\xi_{mt}}{t}.
\end{equation*}
We recall from Lemmas \ref{QnEstimates} and \ref{lem:absckek} that $q_n|e_kb^m|=|c_ke_k| + \mathcal{O}(b^{2m}) <1$ for sufficiently large $m$. As $|\xi_{mt}|<1/2$, we thus get
\begin{equation}
\label{eq:hmbound}
|h_{mt}|< \frac{1}{2t} < \frac{1}{2},
\end{equation}
and it follows that $1-h_{mt}>1/2$. For $\beta_{mt}$, we have
\begin{equation*}
\beta_{mt} <  2 \left( \frac{\pi |e_kb^m|\xi_{mt}}{2} \right)^2 < \frac{\pi^2(e_kb^m)^2}{8},
\end{equation*}
and thus for sufficiently large values of $m$ we get $|\beta_{mt}/(1-h_{mt})|<1$ and
\begin{equation}
\label{eq:logzero}
\log \left( 1- \frac{\beta_{mt}}{1-h_{mt}}\right) = - \sum_{j=1}^{\infty} \frac{1}{j} \left( \frac{\beta_{mt}}{1-h_{mt}} \right)^j = \mathcal{O}(b^{2m}) .
\end{equation} 
Recalling that $q_n = \mathcal{O}(|b|^{-m})$, it now follows from \eqref{eq:logsBm} and \eqref{eq:logzero} that
\begin{equation*}
\left| \log B_m - \log B_m^* \right| = \left| 2 \sum_{t=1}^{(q_n-1)/2} \log \left( 1- \frac{\beta_{mt}}{1-h_{mt}} \right) \right| = \mathcal{O}(|b|^m),
\end{equation*}
and thus $\lim_{m \to \infty} \log B_m = \lim_{m \to \infty} \log B_m^*$. This confirms that we may choose to analyze $B_m^*$ in \eqref{eq:Bstar} rather than the original product $B_m$. 

Finally, we rewrite $\log B_m^*$ using its Taylor expansion as
\begin{equation}
\label{eq:Hmsplit}
\begin{aligned}
\log  B_m^* &= 2 \sum_{t=1}^{(q_n-1)/2} \log (1-h_{mt}) = -2 \sum_{t=1}^{(q_n-1)/2} \sum_{j=1}^{\infty} \frac{1}{j} h_{mt}^j \\
&=-2 \left( \sum_{t=1}^{(q_n-1)/2} h_{mt} + \sum_{t=1}^{(q_n-1)/2} \sum_{j=2}^{\infty} \frac{1}{j}h_{mt}^j \right) =: -2(H_m^{(1)}+H_m^{(2)}).
\end{aligned}
\end{equation}
We go on to study the behaviour of the two sums $H_m^{(1)}$ and $H_m^{(2)}$ separately in the following subsections.


\subsection{Convergence of $H_m^{(2)}$}
Let us first treat the sum
\begin{equation*}
H_m^{(2)} =  \sum_{t=1}^{(q_n-1)/2} \sum_{j=2}^{\infty} \frac{1}{j}h_{mt}^j .
\end{equation*}
It is an easy task to show that $H_m^{(2)}$ is bounded, but showing convergence requires greater efforts.

We begin by showing that terms where $t$ or $j$ is greater than $|b|^{-m/2}$ will not contribute significantly to $H_m^{(2)}$. Recall from \eqref{eq:hmbound} that $|h_{mt}|<1/(2t)$ for sufficiently large $m$, and thus for $u\geq 2$ we get
\begin{equation*}
\left|\sum_{j=u}^{\infty}\frac{1}{j}h^j_{mt}\right|<\sum_{j=u}^{\infty}|h^j_{mt}| =
		\frac{|h^u_{mt}|}{1-|h_{mt}|}<2\left(\frac{1}{2t}\right)^u.
\end{equation*}
Now let $u=\lfloor |b|^{-m/2} \rfloor$, and choose $m$ so that $2 \leq u \leq (q_n-1)/2$ and \eqref{eq:hmbound} holds.  We then have 
\begin{equation*}
	\left|\sum_{t=u+1}^{(q_n-1)/2}\sum_{j=2}^{\infty}\frac{1}{j}h^j_{mt}\right| <\sum_{t=u+1}^
		{(q_n-1)/2} 2\left(\frac{1}{2t}\right)^2 < \frac{1}{2}\sum_{t=u+1}^{\infty}\frac{1}{t^2} 
		< \frac{1}{2u}
\end{equation*}
and
\begin{equation*}
	\left|\sum_{t=1}^{u}\sum_{j=u+1}^{\infty}\frac{1}{j}h^j_{mt}\right| < \sum_{t=1}^{u} 
		2\left(\frac{1}{2t}\right)^{u+1} < \left(\frac{1}{2}\right)^u \sum_{t=1}^{\infty}
		\frac{1}{t^2} = \frac{\pi^2}{6 \cdot 2^{u}}.
\end{equation*}
Both of these sums are $\mathcal{O}(|b|^{m/2})$, and it follows that
\begin{align*}
	H_m^{(2)} &=  \sum_{t=1}^{u}\sum_{j=2}^{u} \frac{1}{j}h^j_{mt} + \sum_{t=1}^{u}\sum_{j=u+1}^{\infty}\frac{1}{j}h^j_{mt} + \sum_{t=u+1}^{(q_n-1)/2}\sum_{j=2}^{\infty}	\frac{1}{j}h^j_{mt}\\
	&= \sum_{t=1}^{u}\sum_{j=2}^{u} \frac{1}{j}h^j_{mt} + \mathcal{O}(|b|^{m/2}). 
\end{align*}
Thus, we have 
\begin{equation}
\label{eq:simSm}
H_m^{(2)} \sim \sum_{t=1}^{u}\sum_{j=2}^{u} \frac{1}{j}h^j_{mt},
\end{equation}
where we recall from Section \ref{sec:notation} that this notation means that the limit of $H_m^{(2)}$ equals that of its truncation $\sum_{t=1}^{u}\sum_{j=2}^{u} h^j_{mt}/j$ as $m\rightarrow \infty$.

Now let us see that
\begin{equation}
\label{eq:simhinfty}
 \sum_{t=1}^{u}\sum_{j=2}^{u} \frac{1}{j}h^j_{mt} \sim \sum_{t=1}^{u}\sum_{j=2}^{u} \frac{1}{j}h^j_{\infty t}, 
\end{equation}
where $h_{\infty t}$ is given in \eqref{eq:hinfty}. As we are considering only $j,t \leq u$, we have $jt \leq u^2 \leq |b|^{-m}$, and hence $jtb^{2m} \rightarrow 0$ as $m\rightarrow \infty$. From Lemma \ref{lem:propsm}\ref{it:propsm4}, we therefore get
\begin{equation*}
h_{mt}^j - h_{\infty t}^j = \left(h_{\infty t} + \mathcal{O}(tb^{2m}) \right)^j - h_{\infty t}^j = \mathcal{O}(jtb^{2m}),
\end{equation*}
and it follows that 
\begin{equation*}
\sum_{t=1}^{u}\sum_{j=2}^{u}\frac{1}{j}(h^j_{mt}-h^j_{\infty t}) = \sum_{t=1}^{u}\sum_{j=2}
^{u}\mathcal{O}(tb^{2m})=\mathcal{O}(u^3b^{2m})=\mathcal{O}(|b|^{m/2}).
\end{equation*}
This confirms \eqref{eq:simhinfty}.

Finally, by reusing the argument that led us to conclude that $H_m^{(2)}\sim \sum_{t=1}^{u}\sum_{j=2}^{u} h^j_{mt}/j$, we find that 
\begin{equation}
\label{eq:simhm}
\sum_{t=1}^{u}\sum_{j=2}^{u} \frac{1}{j}h^j_{\infty t} \sim \sum_{t=1}^{\infty} \sum_{j=2}^{\infty} \frac{1}{j}h^j_{\infty t}, 
\end{equation}
and recalling that $|h_{\infty t}| = |c_ke_k\xi_{\infty t}/t| < 1/(2t)<1/2$, we get
\begin{equation*}
\sum_{t=1}^{\infty} \sum_{j=2}^{\infty} \frac{1}{j} |h_{\infty t}|^j < \sum_{t=1}^{\infty} \frac{h_{\infty t}^2}{1-|h_{\infty t}|}<\infty.
\end{equation*}
Thus, the sum on the right hand side in \eqref{eq:simhm} is absolutely convergent. We denote its limit by $\Gamma_{\ell, k}^{(2)}$, and from \eqref{eq:simSm}--\eqref{eq:simhm} it follows that
\begin{equation}
\label{eq:limHm2}
\lim_{m\rightarrow \infty} H_m^{(2)} = \Gamma_{\ell, k}^{(2)} .
\end{equation}


\subsection{Convergence of $H_m^{(1)}$}
We are left with verifying the convergence of 
$$H_m^{(1)} = \sum_{t=1}^{(q_n-1)/2} h_{mt}.$$ 
This rather tedious task is performed in several steps. Eventually we will see that if $\lim_{m\rightarrow \infty} H_m^{(1)}$ exists, then it equals the limit of $\sum_{t=1}^{(q_n-1)/2} C_{mt}S_{mt}$, where $S_{mt}$ is a sum of sines and $C_{mt}$ is a cotangent difference. Careful estimates of $S_{mt}$ and $C_{mt}$ will reveal that the sum $\sum C_{mt} S_{mt}$ indeed converges.

Note first that we may return to standard summation notation at this point, as $h_{m(q_n/2)}=0$ if $q_n$ is even. Thus, we let $M_n:=\lfloor (q_n-1)/2 \rfloor$ and have
\begin{equation*}
H_m^{(1)} = \sum_{t=1}^{M_n} h_{mt} = \sum_{t=1}^{M_n}  \cot \left( \frac{\pi t}{q_n}\right) \sin \pi |e_kb^m| \xi_{m t},
\end{equation*}
regardless of whether $q_n$ is even or odd. 

Now let us see that 
\begin{equation}
\label{eq:HsimHstar}
H_m^{(1)} \sim H_m^* := \sum_{t=1}^{M_n} \cot \left( \frac{\pi t}{q_n}\right) \sin \pi |e_kb^m| \xi_{\infty t},
\end{equation}
where we again recall that $f(m) \sim g(m)$ means $\lim_{m\rightarrow \infty} f(m)/g(m) =1$. Using that $\sin x = x(1+\mathcal{O}(x^2))$ and Lemma \ref{lem:propsm}\ref{it:propsm3}, we get
\begin{align*}
H_m^{(1)}-H_m^* &= \sum_{t=1}^{M_n} \cot \left( \frac{\pi t}{q_n}\right) \pi |e_kb^m| (\xi_{mt}-\xi_{\infty t}) (1+\mathcal{O}(b^{2m})) \\
&= \sum_{t=1}^{M_n} \cot \left( \frac{\pi t}{q_n}\right) \pi |e_kb^m| \mathcal{O}(tb^{2m}).
\end{align*}
From the inequality $\cot x < 1/x$ it thus follows that
\begin{align*}
|H_m^{(1)}-H_m^*| &< \mathcal{O}(b^{2m})\sum_{t=1}^{M_n} q_n|e_kb^m| \\
&= \mathcal{O}(b^{2m}) \cdot M_n \left( |e_kc_k| + \mathcal{O}(b^{2m}) \right) = \mathcal{O}(|b|^m),
\end{align*}
where we have used Lemma \ref{QnEstimates} and the fact that $M_n< q_n = \mathcal{O}(|b|^{-m})$. This confirms \eqref{eq:HsimHstar}.

Finally, let us see that if $\lim_{m\rightarrow\infty} H_m^*$ exists, then it equals that of $\sum S_{mt} C_{mt}$ for a certain sum of sines $S_{mt}$ and cotangent difference $C_{mt}$. Using summation by parts, we may rewrite $H_m^*$ as
\begin{equation}
\label{SummationByParts}
\begin{aligned}
H_m^* &= \sum_{t=1}^{M_n-1}\left( \cot \left( \frac{\pi t}{q_n} \right) -\cot \left( \frac{\pi (t+1)}{q_n}\right)
		\right) \sum_{s=1}^t \sin \pi |e_{k}b^m| \xi_{\infty s} \\
	&+ \cot \left( \frac{\pi M_n}{q_n} \right) \sum_{s=1}^{M_n} \sin \pi |e_{k}b^m|
		\xi_{\infty s}. 
\end{aligned}
\end{equation}
Consider the second term on the right hand side in this equation. As $|\xi_{\infty s}|<1/2$ and $\sin x = x(1+\mathcal{O}(x^2))$, we have
\begin{equation*}
\left| \sum_{s=1}^{M_n} \sin \pi |e_kb^m| \xi_{\infty s} \right| < \frac{\pi}{2} q_n |e_kb^m| (1+\mathcal{O}(b^{2m}))= \frac{\pi}{2}|c_ke_k| (1+\mathcal{O}(b^{2m})),
\end{equation*}
where we have again used that $M_n<q_n=\mathcal{O}(b^{2m})$ and Lemma \ref{QnEstimates}. It follows that
\begin{equation*}
\left|\cot \left( \frac{\pi M_n}{q_n} \right) \sum_{s=1}^{M_n} \sin \pi |e_{k}b^m| \xi_{\infty s} \right| < \frac{\pi}{2}\left| c_ke_k \cot \left( \frac{\pi M_n}{q_n} \right) (1+\mathcal{O}(b^{2m}))\right|, 
\end{equation*}
and recalling that $M_n=\lfloor (q_n-1)/2 \rfloor$, it is clear that the cotangent term tends to zero as $m\rightarrow \infty$. It thus follows from \eqref{eq:HsimHstar} and \eqref{SummationByParts} that
\begin{equation}
\label{eq:simCS}
H_m^{(1)} \sim H_m^* \sim \sum_{t=1}^{M_n-1} C_{mt} S_{mt}
\end{equation}
where $C_{mt}:= \cot(\pi t/q_n) - \cot (\pi(t+1)/q_n)$ and $S_{mt}:=\sum_{s=1}^t \sin \pi |e_kb^m| \xi_{\infty s}$. 

\subsubsection{The cotangent difference $C_{mt}$ \label{subsub:cotangent}}
We establish two estimates for $C_{mt}$; one rather coarse bound and one more precise estimate. For ease of notation, let us write $\phi=\pi/q_n$. We then have
\begin{align*}
	0 < C_{mt} &= \frac{\sin((t+1)\phi)\cos(t\phi) - 
		\cos((t+1)\phi)\sin(t\phi)}{\sin(t\phi)\sin((t+1)\phi)}\\
	&= \frac{\sin(\phi)}{\sin(t\phi)\sin((t+1)\phi)}.
\end{align*}
Note that when $t < M_n$, we have $(t+1)\phi < \pi/2$, and thus by $2x/\pi < \sin x < x$, we obtain
\begin{equation}
\label{eq:coarseCm}
0<C_{mt} < \frac{\pi q_n}{4t(t+1)} < \frac{\pi q_n}{4t^2}.
\end{equation}
This is our coarse bound for $C_{mt}$.

For $t\phi <1$, or equivalently $t<q_n/\pi$, we have the finer estimate
\begin{align*}
C_{mt} & = \frac{\phi(1+\mathcal{O}(\phi^2))}{t\phi(1+\mathcal{O}(t^2\phi^2))
		(t+1)\phi(1+\mathcal{O}(t^2\phi^2))}\\
	& = \frac{q_n}{\pi t(t+1)}\left(1+ \mathcal{O} \left( \frac{t^2}{q_n^2}\right)\right) = \frac{q_n}{\pi t(t+1)}\left(1+ \mathcal{O} \left( t^2 b^{2m} \right)\right),
\end{align*}
where we have used that $1/q_n = \mathcal{O}(|b|^{m})$. Combining this estimate with 
\begin{equation*}
S_{mt} = \pi |e_kb^m|  (1+\mathcal{O}(b^{2m})) \sum_{s=1}^{t} \xi_{\infty s} ,
\end{equation*}
and using Lemma \ref{QnEstimates}, we get
\begin{equation}
\label{eq:estCmSm}
C_{mt}S_{mt} = \frac{|c_ke_k|}{t(t+1)} \left( 1+\mathcal{O}(t^2b^{2m}) \right) \sum_{s=1}^t \xi_{\infty s}. 
\end{equation}

\subsubsection{The sum of sines $S_{mt}$}
Now let us find an appropriate bound on $S_{mt}$ in terms of $m$ and $t$. As illustrated by Mestel and Verschueren in \cite[Figure 7.1]{MV15}, this sum appears to grow slowly with increasing values of $t$, at least for the specal case of $\alpha=\omega$ the golden mean. As demonstrated by the next lemma, this is also true for the general case where $\ell\geq 1$.
\begin{lem}
\label{lem:Smtbound}
For $t \in \{ 1,2, \ldots , q_n-1 \}$, the sum 
\begin{equation*}
S_{mt} = \sum_{s=1}^t \sin \pi |e_kb^m| \xi_{\infty s}
\end{equation*}
satisfies $|S_{mt}| = \mathcal{O}(|b|^m\log t)$. 
\end{lem}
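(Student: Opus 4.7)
The plan is to reduce the sine sum to a classical discrepancy sum for the sawtooth function and then control it blockwise via the Denjoy--Koksma inequality combined with the Ostrowski expansion of $t$ in base $\alpha_{\sigma_k}$.

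First, I would Taylor expand the sine. Since $|\xi_{\infty s}| \leq 1/2$ uniformly in $s$ and $|e_k b^m| \to 0$, we have
$$\sin \pi |e_k b^m| \xi_{\infty s} = \pi |e_k b^m| \xi_{\infty s} + \mathcal{O}(|b|^{3m})$$
uniformly in $s$. Summing this over $s = 1, \ldots, t$ and using $t \leq q_{\ell m + k} - 1 = \mathcal{O}(|b|^{-m})$ from Lemma \ref{QnEstimates}, the cubic remainders aggregate to $\mathcal{O}(t|b|^{3m}) = \mathcal{O}(|b|^{2m})$, so that
$$S_{mt} = \pi |e_k b^m|\, T_t + \mathcal{O}(|b|^{2m}), \qquad T_t := \sum_{s=1}^{t} \left( \{s\alpha_{\sigma_k}\} - \tfrac{1}{2} \right).$$
Since $\mathcal{O}(|b|^{2m})$ is absorbed into $\mathcal{O}(|b|^m \log t)$ for $t \geq 2$ (the case $t = 1$ is trivial), it suffices to prove that $T_t = \mathcal{O}(\log t)$.

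To bound $T_t$, I would invoke the Ostrowski representation (Theorem \ref{OstrowskiRepr}) of $t$ in base $\alpha_{\sigma_k}$:
$$t = \sum_{i=1}^{z} v_i\, q_i(\alpha_{\sigma_k}), \qquad z = z(t) = \mathcal{O}(\log t),$$
where $v_i$ is bounded by the corresponding partial quotient of $\alpha_{\sigma_k}$. Since $\alpha_{\sigma_k}$ has a purely periodic continued fraction expansion (a cyclic permutation of $\alpha$'s), all its partial quotients are uniformly bounded by $A := \max_{1 \leq j \leq \ell} a_j$, so $v_i \leq A$ for every $i$. Consequently $\{1, 2, \ldots, t\}$ partitions into $\sum_i v_i \leq A z(t) = \mathcal{O}(\log t)$ consecutive blocks, each of length $q_i(\alpha_{\sigma_k})$ for some $i$. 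The classical Denjoy--Koksma inequality states that for any $x_0 \in \mathbb{R}$ and any $f \colon [0,1] \to \mathbb{R}$ of bounded variation with $\int_0^1 f = 0$,
$$\left| \sum_{s=1}^{q_i(\alpha_{\sigma_k})} f\!\left( \{x_0 + s\alpha_{\sigma_k}\} \right) \right| \leq V(f).$$
Applied to the sawtooth $f(x) = \{x\} - 1/2$ (which satisfies $V(f) = 1$) with $x_0$ chosen appropriately for each block, every block contributes at most $1$ to $T_t$, whence $|T_t| = \mathcal{O}(\log t)$. Combining with the first display gives $|S_{mt}| = \mathcal{O}(|b|^m \log t)$.

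The main obstacle is that without uniform boundedness of the partial quotients, the Ostrowski-digit bound $v_i \leq A$ would fail and the blockwise sum could accumulate faster than $\log t$. It is precisely the periodicity of the continued fraction expansion of $\alpha$ (equivalently, $\alpha$ being a quadratic irrational) that ensures $A$ is a finite constant, and this is the point where the quadratic-irrational hypothesis is indispensable for the clean logarithmic bound.
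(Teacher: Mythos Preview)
Your proposal is correct and follows essentially the same route as the paper: Taylor expand the sine to reduce to the sawtooth sum $\sum_{s\le t}(\{s\alpha_{\sigma_k}\}-\tfrac12)$, split $t$ via its Ostrowski expansion in base $\alpha_{\sigma_k}$, bound each block by a Denjoy--Koksma type estimate (the paper isolates this as a separate Lemma giving $3v/2$ for a block of length $vq$, which is the same content), and use the bounded partial quotients of the periodic expansion to get $\sum_i v_i = \mathcal{O}(\log t)$. The only cosmetic difference is that you Taylor expand before splitting whereas the paper does it after; since the expansion is uniform in $s$ this is immaterial.
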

For proving Lemma \ref{lem:Smtbound}, we will need the following result.
\begin{lem}
\label{lem:sumfrac}
Let $p/q$ be a convergent of any real $\alpha$. Then for any $\theta \in \RR$ and $v \in \NN$, we have
	$$\left| \sum_{i=1}^{vq} \{\theta +i\alpha\} -\frac{1}{2} \right|<\frac{3v}{2}. $$
\end{lem}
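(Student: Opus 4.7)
The plan is to split the sum of length $vq$ into $v$ blocks of length $q$ and bound each block by $3/2$ independently of the starting phase. Concretely, setting $\theta_j := \theta + jq\alpha$, one has
\[
\sum_{i=1}^{vq}\!\left(\{\theta+i\alpha\}-\tfrac12\right)
=\sum_{j=0}^{v-1}\sum_{i=1}^{q}\!\left(\{\theta_j+i\alpha\}-\tfrac12\right),
\]
so it is enough to prove $\bigl|\sum_{i=1}^{q}(\{\theta+i\alpha\}-1/2)\bigr|<3/2$ for every real $\theta$.

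To handle one block, I would replace $\alpha$ by the convergent $p/q$ and exploit the exact cancellation available there. Writing $\alpha=p/q+\varepsilon$ with $|\varepsilon|<1/q^{2}$ (from \eqref{eq:stderror}), and using that $\gcd(p,q)=1$ makes $i\mapsto ip \bmod q$ a permutation of $\{0,1,\ldots,q-1\}$, the elementary identity $\sum_{j=0}^{q-1}\{x+j/q\}=\{qx\}+(q-1)/2$ gives
\[
\sum_{i=1}^{q}\!\left(\{\theta+ip/q\}-\tfrac12\right)=\{q\theta\}-\tfrac12 \in [-\tfrac12,\tfrac12).
\]
This is the ``main term'' of the block.

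Next I would estimate the error introduced by replacing $p/q$ with $\alpha$. Since $|i\varepsilon|<i/q^{2}\le 1/q<1$ for $1\le i\le q$, each term $\{\theta+i\alpha\}-\{\theta+ip/q\}$ differs from $i\varepsilon$ by an integer $\chi_i\in\{-1,0,1\}$, where $\chi_i\ne 0$ exactly when adding $i\varepsilon$ makes $\{\theta+ip/q\}$ wrap past $0$ or $1$. Summing, $\bigl|\sum_{i=1}^{q} i\varepsilon\bigr|\le |\varepsilon|q(q+1)/2<1$. For the number of wraparounds, assuming $\varepsilon>0$ (the other case is symmetric) a wrap at index $i$ requires $\{\theta+ip/q\}\ge 1-i\varepsilon\ge 1-q\varepsilon$; but the points $\{\theta+j/q\}$, $0\le j\le q-1$, are $1/q$-spaced on $[0,1)$, so at most $\lceil q\cdot q\varepsilon\rceil\le 1$ of them lie in an interval of length $q\varepsilon<1/q$. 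Hence $\sum_i\chi_i\in\{0,1\}$, and the total error lies in $(-1,1)$.

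Combining the two estimates yields $\bigl|\sum_{i=1}^{q}(\{\theta+i\alpha\}-1/2)\bigr|<\tfrac12+1=\tfrac32$ for every $\theta$, and summing over the $v$ blocks gives the stated bound $3v/2$. The only genuinely delicate step is the wraparound count; everything else is bookkeeping. The key quantitative inputs are the convergent error bound $|\alpha-p/q|<1/q^{2}$ and the $1/q$-spacing of the orbit $\{ip/q\}$, which together force $q\varepsilon<1/q$ and thus at most one wraparound per block.
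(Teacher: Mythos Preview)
Your decomposition into $v$ blocks of length $q$ with shifted phase $\theta_j=\theta+jq\alpha$ is exactly what the paper does; the paper then simply cites \cite[Lemma 7.2]{MV15} for the single-block bound $\bigl|\sum_{i=1}^{q}(\{\theta+i\alpha\}-1/2)\bigr|<3/2$, whereas you supply a self-contained argument for it. Your argument for the $v=1$ case is sound: the exact evaluation $\sum_{i=1}^{q}\{\theta+ip/q\}=\{q\theta\}+(q-1)/2$ via the permutation $i\mapsto ip\bmod q$ gives the main term in $[-1/2,1/2)$, and the perturbation analysis---using $|\varepsilon|<1/(qq_{n+1})<1/q^2$ so that $\bigl|\sum_i i\varepsilon\bigr|<1$ and the $1/q$-spacing of the points $\{\theta+j/q\}$ forces at most one wraparound in the interval $[1-q\varepsilon,1)$ of length $<1/q$---correctly bounds the error in $(-1,1)$. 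One cosmetic remark: the count of $1/q$-spaced points in an interval of length $L<1/q$ is at most $1$ simply because any two such points are $\ge 1/q$ apart; your formula $\lceil q\cdot q\varepsilon\rceil$ happens to give the right answer here but is not the general expression for this count.
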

\begin{proof}
The proof for $v=1$ is given in \cite[Lemma 7.2]{MV15}. For $v\geq 2$ it follows that
\begin{align*}
		\left| \sum_{i=1}^{vq} \{\theta +i\alpha\} -\frac{1}{2} \right| &= \left|\sum_{j=0}
		^{v-1}	\sum_{u=1}^{q} \{\theta +(jq+u)\alpha\} -\frac{1}{2} \right|\\
		&\leq \sum_{j=0}^{v-1} \left| \sum_{u=1}^{q} \{(\theta+jq\alpha) +u\alpha\} -\frac{1}{2} \right| < \sum_{j=0}^{v-1} \frac{3}{2}=\frac{3v}{2}.
	\end{align*}
\end{proof}

\begin{proof}[Proof of Lemma \ref{lem:Smtbound}]
Recall from Lemma \ref{OstrowskiRepr} that there exist unique integers $z, v_1, \ldots , v_z \in \NN$ such that
\begin{equation*}
t=\sum_{s=1}^z v_sq_s .
\end{equation*}
We will use this representation of $t$ to split the sum $S_{mt}$ into chunks of length $v_sq_s$ as follows. Let us introduce the notation $t_z=0$ and $t_s=\sum_{u=s+1}^z v_uq_u$. Moreover, we define
\begin{equation*}
\xi_{\infty r}(\theta) := \left\{ \theta +r\alpha_{\sigma_k} - \frac{1}{2} \right\}.
\end{equation*}
Note that our $\xi_{\infty r}$ defined in \eqref{eq:xinfty} is then precisely $\xi_{\infty r}(0)$. With this generalized $\xi_{\infty r}(\theta)$ introduced, we may rewrite $S_{mt}$ as
\begin{align*}
S_{mt} &=  \sum_{r=1}^{v_zq_z}\sin \pi |e_{k}b^m|\xi_{\infty r}(0) + \sum_{r=1}^{v_{z-1} q_{z-1}}\sin \pi |e_{k}b^m|\xi_{\infty r}(v_z q_z \alpha_{\sigma_k}) \\
	   &+ \sum_{r=1}^{v_{z-2}q_{z-2}}\sin \pi |e_{k}b^m|\xi_{\infty r}((v_zq_z+ v_{z-1}q_{z-1})\alpha_{\sigma_k}) + \ldots \\ 
	   &= \sum_{s=1}^{z} \sum_{r=1}^{v_sq_s} \sin \pi |e_{k}b^m| \xi_{\infty r}(t_s \alpha_{\sigma_k}).
\end{align*}
Thus, if we also introduce the generalized notation
\begin{equation*}
S_{mt}(\theta) = \sum_{r=1}^t \sin \pi |e_kb^m| \xi_{\infty r}(\theta),
\end{equation*}
then we can express $S_{mt}$ as
\begin{equation}
\label{eq:SmsumSm}
S_{mt} =S_{mt}(0)= \sum_{s=1}^z S_{m(v_sq_s)}(t_s\alpha_{\sigma_k}) . 
\end{equation}

Finally we use Lemma \ref{lem:sumfrac} to bound the terms in the sum \eqref{eq:SmsumSm}. Using the estimate $\sin x = x(1+\mathcal{O}(x^2))$, we get
\begin{align*}
\left| S_{m(v_sq_s)}(t_s\alpha_{\sigma_k}) \right| &= \left| \pi e_kb^m(1+\mathcal{O}(b^{2m})) \sum_{r=1}^{v_sq_s} \xi_{\infty r} (t_s\alpha_{\sigma_k})\right| \\
&= \pi |e_kb^m| (1+\mathcal{O}(b^{2m})) \left| \sum_{r=1}^{v_sq_s} \left\{ t_s\alpha_{\sigma_k} + r\alpha_{\sigma_k} \right\} - \frac{1}{2} \right|\\
&\leq \frac{3}{2} \pi v_s |e_kb^m| (1+\mathcal{O}(b^{2m})).  
\end{align*}
Thus, we have
\begin{equation}
\label{eq:Smmedbound}
|S_{mt}| \leq \sum_{s=1}^z |S_{m(v_sq_s)}(t_s\alpha_{\sigma_k})| \leq \frac{3}{2}\pi |e_kb^m|(1+\mathcal{O}(b^{2m}))\sum_{s=1}^z v_s . 
\end{equation}
Recalling from Lemma \ref{OstrowskiRepr} that $v_s \leq \max \{ a_1, \ldots , a_{\ell} \}$ for all $s$, we have
\begin{equation*}
\sum_{s=1}^{z} v_s \leq z \cdot \max_{1\leq j \leq \ell} a_j = \mathcal{O}(z) = \mathcal{O}(\log t) ,
\end{equation*}
and combined with \eqref{eq:Smmedbound} this yields $|S_{mt}| = \mathcal{O}(|b|^{m} \log t)$.
\end{proof}

We are now equipped to prove the convergence of $H_m^{(1)}$, or equivalently the convergence of $\sum_{t=1}^{M_n-1} C_{mt}S_{mt}$ in \eqref{eq:simCS}. From \eqref{eq:coarseCm} and Lemma \ref{lem:Smtbound} it follows that
\begin{equation}
\label{eq:logtt2}
|C_{mt}S_{mt}| \leq \frac{\pi q_n}{4t^2} \cdot \mathcal{O}(|b|^m \log t) = \mathcal{O}\left( \frac{\log t}{t^2}\right),
\end{equation}
where we have also used $q_n|b|^m = c_k + \mathcal{O}(b^{2m})$ from Lemma \ref{QnEstimates}. This implies that there exists a constant $K>0$ such that
\begin{equation}
\label{eq:sumboundK}
\sum_{t=1}^{M_n-1} |C_{mt} S_{mt}| \leq K.
\end{equation}
It is not clear that the sequence $(\sum_{t=1}^{M_n-1} C_{mt}S_{mt})_{m\geq 1}$ is monotone, so the bound \eqref{eq:sumboundK} alone does not prove convergence. But let us now compare this sequence to a closely related, absolutely convergent sum.

Let $u=\lfloor |b|^{-m/2} \rfloor$, and choose $m$ sufficiently large for $u<q_n/\pi<M_n-1$. We can then write 
\begin{equation}
\label{eq:split}
\sum_{t=1}^{M_n-1} C_{mt}S_{mt} = \sum_{t=1}^{u} C_{mt}S_{mt} + \sum_{t=u+1}^{M_n-1} C_{mt}S_{mt}.
\end{equation}
It follows from \eqref{eq:logtt2} that
\begin{equation}
\label{eq:tail}
\sum_{t=u+1}^{M_n-1} C_{mt}S_{mt} = \mathcal{O}\left( \frac{\log u}{u}\right). 
\end{equation}
For the first sum on the right hand side in \eqref{eq:split}, we use the finer estimate \eqref{eq:estCmSm} from Section \ref{subsub:cotangent} to obtain
\begin{equation}
\label{eq:head}
\sum_{t=1}^{u} C_{mt}S_{mt} = (1+\mathcal{O}(|b|^m)) \sum_{t=1}^u \frac{|c_ke_k|}{t(t+1)} \sum_{s=1}^t \xi_{\infty s} .
\end{equation}
It follows from \eqref{eq:sumboundK} that both sides in \eqref{eq:head} are bounded by $K$ in absolute value. Thus, the series $\sum_{t=1}^u |c_ke_k|/(t(t+1)) \sum_{s=1}^t \xi_{\infty s}$ is absolutely convergent, and converges to some real number $\Gamma_{\ell, k}^{(1)}$ as $u\rightarrow \infty$. Finally, by combining \eqref{eq:split}--\eqref{eq:head}, it follows that
\begin{equation}
\label{eq:limHm1}
 \lim_{m\rightarrow \infty} H_m^{(1)} = \lim_{m \rightarrow \infty} \sum_{t=1}^{M_n-1} C_{mt}S_{mt} =  \Gamma_{\ell, k}^{(1)}.
\end{equation}

\subsection{Conclusion \label{subsec:Bmconc}}
Combining \eqref{eq:Hmsplit}, \eqref{eq:limHm2} and \eqref{eq:limHm1}, we finally arrive at
\begin{equation*}
\lim_{m \rightarrow \infty} \log B_m^* = -2 \left( \Gamma_{\ell, k}^{(1)} + \Gamma_{\ell, k}^{(2)} \right).
\end{equation*}
Recalling that $\log B_m^* \sim \log B_m$, it follows that $\log B_m$ converges to a finite limit, and accordingly the product $B_m$ in \eqref{eq:Bm} converges to a strictly positive number. 


\section{Proof of Theorem \ref{MainThm} \label{sec:proof}}
The proof of Theorem \ref{MainThm} is essentially completed. Nevertheless, we include a brief summary. Theorem \ref{MainThm} states that if $\alpha=[0;\overline{a_1, \ldots , a_{\ell}}]$ is an irrational with a periodic continued fraction expansion, then there are positive constants $C_0, \ldots , C_{\ell-1}$ such that
\begin{equation}
\label{eq:conc}
\lim_{m\rightarrow \infty} Q_{\ell m +k}(\alpha) = \prod_{r=1}^{q_{\ell m +k}} |2\sin \pi r \alpha| = C_k
\end{equation}
for each $k=0, 1, 2, \ldots , \ell -1$. By Lemma \ref{lem:decomp}, the product $Q_{\ell m +k}(\alpha)$ for fixed $k$ can be decomposed as
\begin{equation}
\label{eq:decompconc}
Q_{\ell m+k}(\alpha)=A_mB_mC_m, 
\end{equation}
where $A_m$, $B_m$ and $C_m$ are defined in \eqref{eq:Am}--\eqref{eq:Cm}. We have seen in Section \ref{sec:Amconv} that
$$\lim_{m\rightarrow \infty} A_m = 2\pi|c_ke_k| > 0.$$
Moreover, by Theorem \ref{thm:convCm} we have $\lim_{m\rightarrow \infty} C_m>0$, and finally we have seen in Section \ref{subsec:Bmconc} that also $\lim_{m\rightarrow \infty} B_m>0$. It thus follows from \eqref{eq:decompconc} that \eqref{eq:conc} holds for some $C_k>0$.

\subsection{Proof of Corollary \ref{Cor:preperiod}}

We only sketch the proof of Corollary \ref{Cor:preperiod}, as it largely follows that of Theorem~\ref{MainThm}. Let $\beta=[a_0;a_1,\ldots,a_h,\overline{a_{h+1},\ldots,a_{h+\ell}}]$ and $\alpha=[0;\overline{a_{h+1},\ldots,a_{h+\ell}}]$. It is an easy exercise to verify the identity 
	\begin{equation}\label{SplitOfPeriod}
		q_{h+u}(\beta)=q_{h+1}(\beta)q_{u}(\alpha) + q_{h}(\beta)p_u(\alpha)
	\end{equation}
for all $u\geq 0$. By combining \eqref{SplitOfPeriod} with Theorem \ref{thm:lehmer} and Lemmas \ref{QnEstimates} and \ref{BasicPropQn} for the purely periodic case, one can establish the closed form 
	\begin{equation}\label{ClosedFormPrePeriod}
		q_{h+\ell m +k}(\beta)=\gamma_1^{(m)}q_{h+\ell+k}(\beta)L_m + (-1)^{\ell-1}\gamma_2^{(m)}q_{h+k}(\beta)L_{m-1} ,
	\end{equation}
where $L_m=L_m(c(\alpha)^2,(-1)^{l-1})$ is the Lehmer sequence and $\gamma_1^{(m)}$ and $\gamma_2^{(m)}$ are defined as in Theorem \ref{thm:lehmer}. Moreover, one can find constants $c_{h,k}$ and $e_{h,k}$ (independent of $m$) such that 
\begin{equation}\label{eq:qbetabm}
q_{h+\ell m +k}(\beta)|b|^m = c_{h,k} + \mathcal{O}(b^{2m})
\end{equation}
and
\begin{equation}\label{eq:qbetab}
q_{h+\ell m +k}(\beta)\beta = p_{h+\ell m+k}(\beta) + e_{h,k}b^m,
\end{equation}
with $b=b(\alpha)$ defined in \eqref{eq:b}. Note that \eqref{eq:qbetabm} is essentially Lemma~ \ref{QnEstimates} for the irrational $\beta$, and similarly \eqref{eq:qbetab} corresponds to Lemma \ref{BasicPropQn}. Further calculations verify that 
\begin{equation}
\frac{q_{h+\ell m + k-1}(\beta)}{q_{h+\ell m + k}(\beta)} = \alpha_{\sigma_k} + \mathcal{O}(b^{2m}) \label{XiPrePeriod},
\end{equation}
which is basically Lemma \ref{lem:qnquot} for $\beta$. Thus, we have all tools needed to prove that the limit 
$$\lim_{m\rightarrow \infty} Q_{h+\ell m +k}(\beta)$$
indeed exists for each $k\in \{ 0,1, \ldots \ell-1 \}$. Finally, it turns out that the product $|c_{h,k}e_{h,k}|$ is independent of $h$, that is 
\begin{equation}\label{eq:indeph}
|c_{h,k}e_{h,k}|=|c_ke_k|,
\end{equation}
with $c_k$ and $e_k$ given in \eqref{eq:ck} and \eqref{eq:conste}.
By carefully examining the proof of Theorem \ref{MainThm}, it is clear that \eqref{eq:indeph} guarantees that
$$\lim_{m\rightarrow \infty} Q_{h+\ell m +k}(\beta)= \lim_{m\rightarrow \infty} Q_{\ell m +k}(\alpha),$$
and this completes the proof of Corollary \ref{Cor:preperiod}.


\section{Concluding remarks\label{sec:remarks}}

Let us finally return to the general sequence of sine products 
$$P_n(\alpha) = \prod_{r=1}^n |2 \sin \pi r \alpha |.$$
As a consequence of Theorem \ref{thm:mvgolden}, Mestel and Verschueren show in \cite{MV15} that one can establish polynomial bounds on $P_n(\alpha)$ when $\alpha=\omega$ is the golden mean. Specifically, they show that in this case there exist constants $K_1 \leq 0 < 1 \leq K_2$ such that 
\begin{equation}
\label{eq:polbounds}
n^{K_1} \leq P_n(\alpha) \leq n^{K_2} ,
\end{equation}
for all $n \in \NN$. It is worth mentioning that this is not a new result; in a paper from 1999, Lubinsky studies the product $P_n(\alpha)$ in the language of $q$-series \cite{LUB99}. In particular, he proves that \eqref{eq:polbounds} holds whenever $\alpha=[0;a_1, a_2, \ldots]$ has bounded continued fraction coefficients \cite[Theorem 1.3 II]{LUB99}. His result covers not only the golden mean, but in fact all quadratic irrationals $\alpha$. Accordingly, we do not attempt to deduce \eqref{eq:polbounds} for quadratic irrationals $\alpha$ as a consequence of Theorem \ref{MainThm}.

In the same paper, it is conjectured by Lubinsky that 
\begin{equation}
\label{eq:lub}
\liminf_{n \rightarrow \infty} P_n(\alpha) = 0
\end{equation}
for all irrationals $\alpha$ \cite[p.~220]{LUB99}. Lubinsky himself verifies \eqref{eq:lub} in the presence of \emph{unbounded} continued fraction coefficients of $\alpha=[0;a_1, a_2,...]$, and goes on to say that he believes it must be true in general. Theorems \ref{thm:mvgolden} and \ref{MainThm}, however, suggest otherwise. Numerical calculations indicate that it is precisely along the sequence of best approximation denominators $(q_n)_{n\geq 0}$ of $\alpha$ that $P_n(\alpha)$ takes on its minimum values, in the sense that
\begin{equation}
\label{eq:greater}
P_j(\alpha) \geq P_{q_n}(\alpha) \quad \text{ for } q_{n-1}<j<q_n .
\end{equation}
If \eqref{eq:greater} indeed holds, then Theorems \ref{thm:mvgolden} and \ref{MainThm} imply that Lubinsky's conjecture cannot hold for quadratic irrationals $\alpha$. Unfortunately, we have not succeeded in finding or establishing a rigorous proof of \eqref{eq:greater}, and accordingly Lubinsky's conjecture is (to the best of our knowledge) still open.


\Addresses

\end{document}